\definecolor{lime}{HTML}{A6CE39}
\DeclareRobustCommand{\orcidicon}{%
	\begin{tikzpicture}
	\draw[lime, fill=lime] (0,0) 
	circle [radius=0.16] 
	node[white] {{\fontfamily{qag}\selectfont \tiny ID}};
	\draw[white, fill=white] (-0.0625,0.095) 
	circle [radius=0.007];
	\end{tikzpicture}
	\hspace{-2mm}
}
\xdef\csname orcid\x\endcsname{\noexpand\href{https://orcid.org/\csname orcidauthor\x\endcsname}{\noexpand\orcidicon}}
\newcommand{\Q}{\mathbb{Q}}
\newcommand{\R}{\mathbb{R}}
\newcommand{\F}{\mathbb{F}}
\newcommand{\K}{\mathbb{K}}
\DeclareMathOperator{\Ima}{Im}
\DeclareMathOperator{\ig}{ig}
\DeclareMathOperator{\Id}{id}
\theoremstyle{plain}
\newtheorem{theorem}{Theorem}[section]
\newtheorem{proposition}[theorem]{Proposition}
\newtheorem{lemma}[theorem]{Lemma}
\newtheorem{corollary}[theorem]{Corollary}
\newtheorem{fact}[theorem]{Fact}
\theoremstyle{definition}
\newtheorem{definition}[theorem]{Definition}
\newtheorem{example}[theorem]{Example}
\theoremstyle{remark}
\newtheorem{remark}[theorem]{Remark}
\begin{document}

\title{Notes on valuation theory for Krasner hyperfields}

\author[Linzi, A.]{Alessandro Linzi\orcidA{}}

\address{Center for Information Technologies and Applied Mathematics, University of Nova Gorica, Slovenia.}

\email{alessandro.linzi@ung.si}

\thanks{The author would like to spend a few words to thank H.\ Stoja\l owska, P.\ Touchard, Franz-Viktor and Katarzyna Kuhlmann, I.\ Cristea as well as Ch.\ Massouros, who all, in one way or another, contributed to the realisation of the final version of this manuscript.}

\subjclass{Primary: 12J20, 20N20 Secondary: 13A18.}

\keywords{Hyperfield, multifield, hyperring, valuation, ordered abelian group, tropical hyperfield}


\begin{abstract}
The main aim of this article is to study and develop valuation theory for Krasner hyperfields. In analogy with classical valuation theory for fields, we generalise the formalism of valuation rings to describe equivalence of valuations on hyperfields. After proving basic results and discussing several examples, we focus on the valued hyperfields that Krasner originally defined in 1957. We find that these must have a particular additive structure which in turns implies the existence of a valuation a'la Krasner. We note that given such a valued hyperfield $(F,v)$, the valuation induced by its additive structure does not have to be equivalent to $v$. We discuss the cases in which it does. 
\end{abstract}

\maketitle

\section{Introduction}

In 1957, M.\ Krasner in \cite{Kra57} (the article is included in Krasner's collected works \cite[pages 413--490]{BP19}) formulated for the first time an axiomatisation of structures that generalise fields by allowing the additive operation to be multivalued (see \cite[pages 407--490]{BP19} for more details). He called these hyperfields (in french \textit{hypercorps}).

On the one hand, he found his inspiration in the work of Marty \cite{Mar34,Mar35,Mar36} which is considered as the starting point of hypergroup theory. In fact, the additive part of a hyperfield is a hypergroup (see also \cite{Vuk16,Mas21} for a more detailed historical overview).

On the other hand, Krasner was motivated by his interest in valuation theory (in connection with $p$-adic numbers) and he sensed the importance of some hyperfields which are canonically associated to any valued field (cf.\ Example~\ref{Kgamma}). Let us briefly recall some basic notions of classical valuation theory.

Let $K$ be a field and $\Gamma$ a linearly ordered abelian group (written additively). A surjective map
\[
v:K\to\Gamma\cup\{\infty\}
\]
is called a \emph{(Krull) valuation on }$K$ if it satisfies for all $x,y\in K$:
\begin{itemize}
\item[--] $v(x)=\infty$ if and only if $x=0$,
\item[--] $v(xy)=v(x)+v(y)$,
\item[--] $v(x+y)\geq\min\{v(x),v(y)\}$.
\end{itemize}
Here, $\infty$ is a symbol such that $\gamma+\infty=\infty+\gamma=\infty>\gamma$ for all $\gamma\in\Gamma$. If a valuation $v$ on a field $K$ is given, then $(K,v)$ is called a \emph{valued field}. One usually denotes $\Gamma$ by $vK$ and call it the \emph{value group of }$(K,v)$. The \emph{value} $v(x)$ of $x\in K$ will often be written as $vx$, if there is no risk of confusion.

If $(K,v)$ is a valued field, then
\[
\mathcal{O}_v:=\{x\in K\mid vx\geq 0\}
\]
is a subring of $K$, called the \emph{valuation ring} of $(K,v)$. It determines the valuation map $v$ \emph{up to equivalence}, i.e., up to composition with an order preserving isomorphism of the value group. Any valuation ring has a unique maximal ideal
\[
\mathcal{M}_v:=\{x\in K\mid vx> 0\}
\]
and the field $Kv:=\mathcal{O}_v/\mathcal{M}_v$ is called the \emph{residue field} of $(K,v)$. For further details, let us mention \cite{PE05,PR84} as general references on classical valuation theory.

While it is quite natural to generalise the concept of valuation to hyperfields (see Definition \ref{hyperval}), Krasner noticed that the structures which attracted his attention come equipped with a map similar to a valuation which satisfies two additional properties (see Definition \ref{KVH}). These properties would be vacuous if postulated for valued fields. Thus, Krasner included them in his axiomatisation of valued hyperfields (\textit{hypercorps valu\'e}). 

Krasner's motivation was not model theoretical. Nevertheless, it turns out that the structures he studied play an important role in the model theory of valued fields; specifically for the problem of quantifier elimination for henselian valued fields (of characteristic $0$). In this setting, these objects are known as RV-structures or leading-term structures and have been considered, independently of Krasner, by Flenner in \cite{Fle11} (see also \cite{Bas91,Kuh94,PhD22,LT22} for more details). In addition, an interesting application of hyperfields in the model theory of valued fields recently appeared in \cite{Lee20}.

The interest for valued hyperfields may also be motivated by the following observations. Classically, real algebra, which studies real fields (i.e., linearly ordered fields, with an order compatible with the operations) and was developed by E.\ Artin in his solution of Hilbert's seventeenth problem, is in relation with valuation theory. After the works of Marshall and G\l adki \cite{GM12,GM17} on real hyperfields, it is to expect that a development of valuation theory for hyperfields would be beneficial for this line of research. Significant developments have already been achieved with the generalisation of classical results, such as the Baer--Krull Theorem, to the multivalued setting (see \cite{Kru32,KLS22}). The reader interested in these aspects may look also at \cite{Gla10,Gla17,GlaNotes}. Furthermore, the unpublished work of Viro \cite{Vir10}, followed up by Jun and Jell et al.\ \cite{Jun21, JSY22}, indicate applications in the realm of tropicalization maps and analytification, topics that are as well, classically, in relation with valuation theory. 

We believe that developing valuation theory for hyperfields will eventually lead to further applications back to the classical theory of valuations for fields.

The switch from singlevalued operations to multivalued ones may be not difficult to describe and understand. Nevertheless, the consequences of this switch have to be handled very carefully. For instance, valuation theorists are accostumed to the fact that $v(x-y)$ always defines a metric (in fact, an ultrametric, cf.\ Section \ref{UMS}) on a valued field $(K,v)$, but this clearly ceases to be true (in general) if the operation of addition (and hence of difference) is multivalued. Indeed, in the latter case, the distance map may depend on the choice of some element of the set $x-y$ and this choice is not canonical, in general. This obstacle is overtaken using one of Krasner's postulates which forces all the elements of $x-y$, for $x\neq y$, to have the same value. However, one may expect the latter being a quite strong requirement. In fact, the restrictions due to Krasner's axioms for valued hyperfields are so strong that, e.g., the trivial valuation on a hyperfield $F$ (Example \ref{extriv}) satisfies them only if $F$ is actually a field (Remark \ref{trivialKrasval}). 
For this and other reasons (which we will discuss later in the paper), it makes sense to consider also valuations on hyperfields which do not satisfy the two additional properties imposed by Krasner (cf.\ Example \ref{noKraVal}) and it turns out that this choice is more beneficial than harmful. As a consequence of these observations, the term \lq\lq valued hyperfield\rq\rq\ will be used in this paper in a broader sense than the original one; the valued hyperfields satisfying in addition Krasner's axioms will be called \lq\lq Krasner valued hyperfields\rq\rq and their valuations \lq\lq Krasner valuations\rq\rq.

The manuscript is organised as follows. In Section \ref{sec1} we introduce the necessary concepts and terminology from hypercompositional algebra, taking the opportunity to discuss several examples. In Section \ref{sec2}, we show that the formalism of valuation rings to describe valuations up to equivalence generalises without major modifications to the multivalued framework. We use this formalism to state and prove one of our main results Theorem \ref{maintw} on Krasner valued hyperfields in Section \ref{sec3}.\par
Our main theorem states that the existence of a Krasner valuation on a hyperfield $F$ implies that the additive structure of $F$ satisfies certain additional axioms (cf.\ Proposition \ref{Mittas1}). Conversely, if the additive structure of a hyperfield $F$ satisfies those axioms, then $F$ admits a Krasner valuation (cf.\ Proposition \ref{Mittas2}). Let $v$ be a Krasner valuation on a hyperfield $F$. Then the additive structure of $F$ induces a Krasner valuation $w$ on $F$. We observe that, in general, $w$ is not equivalent to $v$ (Example \ref{last}). After Section \ref{sec4}, where we put into our context the notion of coarsening of a valuation, which turns out to be necessary for our discussion, we describe situations in which $v$ and $w$ are equivalent in Section \ref{sec5}. The final Section \ref{sec6} of this manuscript contains possible further lines of investigation.

Besides presenting some new results, the article is also thought to serve as a unified reference for basic hyperring and hyperfield theory and related terminology. On the one hand, these hypercompositional structures have recently attracted increasing interest from the mathematical community (including top mathematicians such as A.\ Connes \cite{CC10,CC11}). On the other hand, we found the relevant literature on hyperrings and hyperfields to be quite fragmented. After the time and effort spent during the PhD studies and thanks to the fruitful connections with some of the mathematicians who are part of the early history of hypercompositional algebra, we felt that we could give a contribution from this point of view as well.

\section{Preliminaries}\label{sec1}

Let $H$ be a non-empty set and $\mathcal{P}(H)$ its power set. A \emph{multivalued operation} $+$ on $H$ is a function which associates to every pair $(x,y) \in H \times H$ an element of $\mathcal{P}(H)$, denoted by $x+y$. If $+$ is a multivalued operation on $H\neq\emptyset$, then for $x \in H$ and $A,B\subseteq H$ we set 
\[
A+B:=\bigcup_{a\in A,b\in B} a+b,
\]
$A + x := A + \lbrace x \rbrace$ and $x +A := \lbrace x \rbrace + A$. If $A$ or $B$ is empty, then so is $A+B$.\par
A \emph{hypergroup} can be defined as a non-empty set $H$ with a  multivalued operation $+$ which is associative (see Definition \ref{hypergp} (CH1) below) and \emph{reproductive on $H$} (i.e., $x + H = H + x = H$ for all $x \in H$).
This notion was first considered by F.\ Marty in \cite{Mar34,Mar35,Mar36}. The theory of hypergroups, with a detailed historical overview, is presented in \cite{Mas21}, where an extensive bibliography is also provided.
\begin{definition}
A \emph{hyperoperation} $+$ on $H$  is a multivalued operation such that $x+y\neq\emptyset$ for all $x,y\in H$.
\end{definition} 
\begin{lemma}[Theorem 12 in \cite{Mas21}]\label{x+ynempty}
If $(H,+)$ is a hypergroup, then $+$ is a hyperoperation on $H$. 
\end{lemma}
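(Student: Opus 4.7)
The plan is to argue by contradiction: assume there exist $x,y\in H$ with $x+y=\emptyset$, and derive that $x+H=\emptyset$, which contradicts the reproductivity axiom $x+H=H\neq\emptyset$.

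The key manipulation combines associativity with the convention, already fixed in the preliminaries, that $A+B=\emptyset$ whenever one of $A,B$ is empty. From the hypothesis $x+y=\emptyset$, for every $z\in H$ one has $(x+y)+z=\emptyset$. Associativity then yields $x+(y+z)=\emptyset$; unwinding the definition $x+A=\bigcup_{a\in A}(x+a)$ forces $x+w=\emptyset$ for every $w\in y+z$.

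Now I would let $z$ range over all of $H$. The sets $y+z$ cover $y+H$, which by reproductivity equals $H$. Hence $x+w=\emptyset$ for every $w\in H$, i.e.\ $x+H=\emptyset$, contradicting reproductivity applied to $x$. Therefore no such pair $(x,y)$ exists and $x+y\neq\emptyset$ for all $x,y\in H$, so $+$ is a hyperoperation.

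I do not expect a serious obstacle: the argument is short and rests only on the interplay between associativity and reproductivity. The one point requiring attention is bookkeeping with the empty-set conventions for the extension of $+$ to subsets (especially $\emptyset+z=\emptyset$ and $x+\emptyset=\emptyset$), so that the equality $(x+y)+z=x+(y+z)$ is used consistently as an identity of subsets of $H$ that may \emph{a priori} be empty.
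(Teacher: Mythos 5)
Your proof is correct and follows essentially the same route as the paper: assume $x+y=\emptyset$ and use associativity together with reproductivity to force $x+H=\emptyset$, contradicting $x+H=H$. The paper compresses this into the one-line chain $H=x+H=x+(y+H)=(x+y)+H=\emptyset+H=\emptyset$; your version unwinds the same identities elementwise, which is slightly more verbose but makes the empty-set bookkeeping explicit.
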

\begin{proof}
Aiming for a contradiction, suppose that $x+y=\emptyset$ for some $x,y\in H$. Then
\[
H=x+H=x+(y+H)=(x+y)+H=\emptyset+H=\emptyset,
\]
which is excluded.
\end{proof}

The following special class of hypergroups (cf.\ Remark \ref{can} below) will be of interest for us.

\begin{definition} \label{hypergp}
A \emph{canonical hypergroup} is a triple $(H,+,0)$, where $H\neq\emptyset$, $+$ is a multivalued operation on $H$ and $0$ is an element of $H$ such that the following axioms hold:
\begin{itemize}
\item[(CH1)] $+$ is associative, i.e., $(x+y)+z=x+(y+z)$ for all $x,y,z \in H$,
\item[(CH2)] $x+y=y+x$ for all $x,y\in H$,
\item[(CH3)] for every $x\in H$ there exists a unique $x'\in H$ such that $0\in x+x'$ (the element $x'$ will be denoted by $-x$),
\item[(CH4)] $z\in x+y$ implies $y\in z-x:=z+(-x)$ for all $x,y,z\in H$.
\end{itemize}
\end{definition}

\begin{lemma}\label{can}
Let $(H,+,0)$ be a canonical hypergroup. Then the multivalued operation $+$ is reproductive on $H$. In particular, $(H,+)$ is a hypergroup and $+$ is a hyperoperation.
\end{lemma}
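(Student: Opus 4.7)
Since (CH2) gives $H + x = x + H$, it suffices to prove $x + H = H$; the inclusion $x + H \subseteq H$ is immediate from the definition of the multivalued operation, so the content is $H \subseteq x + H$, i.e.\ that every $y \in H$ admits some $h \in H$ with $y \in x + h$.

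The first move I would make is to promote (CH4) to a full biconditional: $z \in x + y \Leftrightarrow y \in z - x \Leftrightarrow x \in z - y$. For the converse direction, if $y \in z + (-x)$, then by (CH2) I rewrite this as $y \in (-x) + z$ and apply (CH4) to obtain $z \in y - (-x) = y + x = x + y$. This uses the involution $-(-x) = x$, which itself is a quick consequence of (CH2) and the uniqueness clause of (CH3): from $0 \in x + (-x)$ and (CH2) we get $0 \in (-x) + x$, so $x$ must be the unique inverse of $-x$. With the biconditional in hand, $x + 0 = \{x\}$ drops out immediately, since $v \in x + 0 \Leftrightarrow 0 \in v + (-x)$, and by (CH3) this happens precisely when $-v = -x$, hence when $v = x$; moreover $x \in x + 0$ because $0 \in x + (-x)$.

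To finish reproductivity, fix $x, y \in H$. Since $0 \in (-x) + x$ and $y \in y + 0$, associativity (CH1) yields
\[
y \in y + 0 \subseteq y + \bigl((-x) + x\bigr) = \bigl(y + (-x)\bigr) + x.
\]
The right-hand side must therefore be nonempty, which forces $y + (-x) \neq \emptyset$, and any $u$ in it satisfies $y \in u + x = x + u$ by (CH2). Hence $y \in x + H$, and $H \subseteq x + H$ follows. This establishes reproductivity; $(H,+)$ is then a hypergroup in Marty's sense, and Lemma \ref{x+ynempty} supplies the hyperoperation conclusion. The main obstacle is the biconditional upgrade of (CH4); after that, everything is a single associativity-plus-uniqueness manoeuvre.
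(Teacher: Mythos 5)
Your proof is correct and hinges on the same associativity manoeuvre the paper uses, namely $y \in y + 0 \subseteq y + \bigl((-x)+x\bigr) = \bigl(y+(-x)\bigr) + x$. You are more explicit than the paper in justifying $y \in y + 0$ (via $-(-x)=x$ and the upgraded (CH4)), a small step the paper leaves implicit, but the underlying argument is identical.
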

\begin{proof}
Fix $a \in H$. For all $x \in H+a$ there exists $y \in H$ such that $x \in y+a \subseteq H$. Therefore, $H+a \subseteq H$. For the other inclusion, observe that for all $x\in H$ we have that 
\[
x \in x+0 \subseteq x+(a-a) = (x-a) + a,
\]
so there exists $y \in x-a \subseteq H$ such that $x \in y+a \subseteq H+a$. We have proved that $H=H+a$ for an arbitrary $a\in H$. Now the conclusions of the lemma follow from (CH2), the definition of hypergroups and Lemma \ref{x+ynempty}.
\end{proof}

Let $(H,+,0)$ be a canonical hypergroup. Then $0$ is called the \emph{neutral element for $+$ in $H$} because of the following observation.

\begin{lemma}[Section III, (b) in \cite{Mas85}]
Let $(H,+,0)$ be a canonical hypergroup. Then $x+0 = \lbrace x \rbrace$ for all $x\in H$.
\end{lemma}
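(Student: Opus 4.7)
The plan is to establish the two inclusions $\{x\}\subseteq x+0$ and $x+0\subseteq\{x\}$ separately. For the first, I would simply invoke Lemma \ref{can} together with Lemma \ref{x+ynempty} to conclude that $x+0\neq\emptyset$; combined with the reverse inclusion (proved below), this forces $x\in x+0$. So the real content lies in showing $x+0\subseteq\{x\}$.

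For the reverse inclusion, the idea is to take an arbitrary $y\in x+0$ and then manipulate this membership using (CH4) and the uniqueness clause of (CH3). First I would apply (CH4) to $y\in x+0$, reading $y$ as the ``sum element'' and $x$ as the first summand; this yields $0\in y+(-x)$, which by (CH2) is the same as $0\in (-x)+y$. Separately, from (CH3) we have $0\in x+(-x)$, hence by commutativity $0\in (-x)+x$. Now I apply (CH3) not to $x$ but to the element $-x$: there is a \emph{unique} $w\in H$ such that $0\in (-x)+w$. Both $x$ and $y$ are such elements, so uniqueness forces $y=x$.

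The only subtle point, and the one I would be most careful about, is the double use of (CH3): once to produce the relation $0\in x+(-x)$ (and thus, via (CH2), $0\in(-x)+x$), and once more as a uniqueness statement applied to the element $-x$ rather than to $x$ itself. Once this bookkeeping is arranged, the argument is essentially two lines and no further machinery beyond axioms (CH1)--(CH4) and Lemma \ref{can} is needed.
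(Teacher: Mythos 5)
Your argument is correct and follows essentially the same route as the paper's: apply (CH4) to $y\in x+0$ to get $0\in y-x$, then invoke the uniqueness clause of (CH3) (your careful bookkeeping of applying it to $-x$ rather than $x$ is the natural way to read the paper's terse ``uniqueness required in (CH3)''). You are slightly more explicit than the paper in noting that non-emptiness of $x+0$, via Lemma~\ref{can}, is what upgrades the inclusion $x+0\subseteq\{x\}$ to equality, but this is implicit in the paper given that Lemma~\ref{x+ynempty} appears just beforehand.
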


\begin{proof}
Take $x\in H$. If $y\in x+0$, then $0 \in y-x$ follows by (CH4). Now $y = x$ follows from the uniqueness required in (CH3).
\end{proof}
\begin{remark} \label{grhy}
Note that an abelian group $(G,*,0)$ is not a priori a canonical hypergroup, because the operation on $G$ is not a multivalued operation, as it takes values in $G$ and not in $\mathcal P(G)$. However, it can be turned into a canonical hypergroup $(G,+,0)$ by setting $x + y := \lbrace x*y \rbrace$ for all $x,y\in G$. Conversely, if a canonical hypergroup $(H,+,0)$ satisfies that $x+y$ is a singleton for all $x,y\in H$, then one can define on $H$ a standard operation which makes it an abelian group with identity $0$. In these cases, we sometimes abusively say that $(G,*,0)$ is a canonical hypergroup or that $(H,+,0)$ is a group. 
\end{remark}
%
%

\subsection{Hyperrings and hyperfields}

Let us now define the structures of main interest in this paper.

\begin{definition}\label{Krasnerhyperrings}
A \emph{(commutative) hyperring} is a tuple $(R,+,\cdot,0)$ which satisfies the following axioms:
\begin{itemize}
\item[(HR1)] $(R,+,0)$ is a canonical hypergroup,
\item[(HR2)] $(R,\cdot)$ is a (commutative) semigroup and $0$ is an absorbing element, i.e., $0\cdot x=x\cdot0= 0$, for all $x\in R$,
\item[(HR3)] the operation $\cdot$ is distributive with respect to $+$. That is, for all $x,y,z\in R$,
\[
x(y+z)=xy+xz,
\]
\end{itemize}
where, for $x\in R$ and $A\subseteq R$, we have set
\[ 
xA:=\{xa\mid a\in A\}.
\]
If the operation $\cdot$ has a neutral element $1 \neq 0$, then we say that $(R,+,\cdot,0,1)$ is a \emph{hyperring with unity}.
If  $(R,+,\cdot,0,1)$  is a hyperring with unity and 
$(R\setminus\{0\}, \cdot,1)$ is an abelian group, then $(R,+,\cdot,0,1)$ is called a \emph{hyperfield}.
If $R$ is a hyperring with unity, then we denote by $R^\times$ the multiplicative group of the \emph{units} of $R$, i.e.,
\[
R^\times:=\{x\in R\mid\exists y\in R~:~xy=1\}.
\]
In particular, if $R$ is a hyperfield, then $R^\times=R\setminus\{0\}$.
\end{definition}

Since we will only consider the commutative case, in the following sections we will call commutative hyperrings simply \emph{hyperrings}.\par

\begin{remark}\label{prefix}
In the literature, the name \lq\lq hyperring\rq\rq\ can be found for structures $(R,+,\cdot)$ where $+$ is an operation and $\cdot$ is a multivalued operation or where both $+$ and $\cdot$ are multivalued operations. Some authors refer to the structures $(R,+,\cdot,0)$ where only the addition is multivalued as Krasner hyperrings (some more historical remarks on this are given in in \cite[Section 4]{Gol18}). 

Structures $(R,+,\cdot,0)$ where $+$ is an operation and $\cdot$ is a multivalued operation (satisfying similar axioms) were introduced in \cite{Rot82} and called multiplicative hyperrings (in italian \textit{iperanelli moltiplicativi}).

Structures $(R,+,\cdot,0)$ with both $+$ and $\cdot$ multivalued operations (satisfying similar axioms) are instead called general hyperrings (see e.g. \cite[Section 2]{CJ13}).

In this paper, the name \lq\lq hyperring\rq\rq\ will be used for Krasner hyperrings exclusively, as indicated in the above definition.\par

In the literature, one may also find the term multiring referring to a structure $(R,+,\cdot,0)$, where $+$ is a multivalued operation and $\cdot$ is an operation, satisfying (HR1), (HR2) and the following weaker version of (HR3):
\begin{itemize}
\item[(MR)] $x\cdot(y+z)\subseteq x\cdot y+x\cdot z$, for all $x,y,z\in M$.
\end{itemize}
Thus, in this case, the different name reflects a difference in the axioms rather than in the structure.\par 
Similarly, multifields are defined as hyperfields satisfying (MR) instead of (HR3). Multirings and multifields have been considered for instance in \cite{Gla10}, where, among other facts, it is observed that all multifields are hyperfields, while there are several meaningful examples of multirings that are not hyperrings.\par
In this paper, we preferred to use the name \lq\lq hyperfield\rq\rq\ solely.
\end{remark}

We say that a hyperring (resp.\ hyperfield) $R$ is a ring (resp.\ field) if the additive canonical hypergroup of $R$ is a group (cf.\ Remark \ref{grhy}). The next observation gives a necessary condition for a hyperring with unity to be a ring. This fact is an immediate corollary of a result already noted in \cite[page 369]{Mit73} (see also Example \ref{scalars} below). We wish to state it for later reference and we take the opportunity to write a quick proof . 

\begin{lemma}[\cite{Mit73}]\label{1-1=0}
A hyperring with unity $R$ is a ring if and only if $1-1=\{0\}$.
\end{lemma}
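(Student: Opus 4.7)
The plan is to prove the non-trivial direction: assuming $1-1=\{0\}$, show that every hyperaddition $x+y$ is a singleton (the converse is immediate, since in a ring $1+(-1)=\{0\}$). The argument proceeds in three steps.

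First, I would upgrade the hypothesis $1-1=\{0\}$ to the statement $x+(-x)=\{0\}$ for every $x\in R$. Since $1+(-1)=\{0\}$ as a set, the convention $xA=\{xa\mid a\in A\}$ yields $x(1+(-1))=x\cdot\{0\}=\{0\}$. On the other hand, (HR3) gives $x(1+(-1))=x\cdot 1+x\cdot(-1)=x+x(-1)$. Hence $x+x(-1)=\{0\}$, so in particular $0\in x+x(-1)$, and the uniqueness clause in (CH3) forces $x(-1)=-x$. Therefore $x+(-x)=\{0\}$ for all $x\in R$.

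Second, I would record the reversibility-type fact that $w\in x+y$ if and only if $-w\in (-x)+(-y)$. This follows by three iterations of (CH4): $w\in x+y\Rightarrow y\in w-x\Rightarrow -x\in y-w\Rightarrow -w\in(-x)-y=(-x)+(-y)$, and the same chain reversed gives the converse.

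Third, I would show that if $z,w\in x+y$, then $z=w$. By step two, $-w\in(-x)+(-y)$, so by definition of the sum of subsets,
\[
z+(-w)\ \subseteq\ (x+y)+((-x)+(-y)).
\]
Using (CH1) and (CH2), the right-hand side equals $(x+(-x))+(y+(-y))=\{0\}+\{0\}=\{0\}$ by step one. Since Lemma~\ref{x+ynempty} (via Lemma~\ref{can}) forces $z+(-w)$ to be non-empty, we conclude $z+(-w)=\{0\}$. Thus $0\in z+(-w)$ and the uniqueness clause in (CH3) gives $-w=-z$, hence $z=w$. So $x+y$ is a singleton for all $x,y\in R$, which by Remark~\ref{grhy} makes $(R,+,\cdot,0,1)$ a ring.

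The main thing to watch out for is the book-keeping in step one, where one must be careful that distributivity yields $x(1-1)$ as a genuine union of singletons and where uniqueness of the inverse is invoked to identify $x(-1)$ with $-x$; the rest of the argument is a neat cascade of (CH4) together with the singleton property $x+(-x)=\{0\}$.
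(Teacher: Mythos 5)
Your proof is correct and follows essentially the same route as the paper: use distributivity to upgrade $1-1=\{0\}$ to $x-x=\{0\}$ for all $x$, then observe that any two elements $z,w$ of a hypersum $x+y$ satisfy $z-w\subseteq(x+y)-(x+y)=(x-x)+(y-y)=\{0\}$, forcing $z=w$ by (CH3). Your only real addition is making explicit the reversibility step $-(x+y)=(-x)+(-y)$ (your step two) and the identification $x(-1)=-x$ (buried in your step one), both of which the paper's terser computation with set-differences takes for granted.
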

\begin{proof}
If $R$ is a field, then $1-1=\{0\}$ holds trivially. Conversely, by axiom (HR3) $1-1=\{0\}$ implies $x-x=\{0\}$ for all $x\in R$. Take $a,b\in R$ and $x,y\in a+b$. We have that
\[
x-y\subseteq (a+b)-(a+b)=(a-a)+(b-b)=\{0\}
\]
In particular, $0\in x-y$ and $x=y$ follows fro (CH3). We have proved that $a+b$ is a singleton for all $a,b\in R$.
\end{proof}

\begin{example}\label{K}
The \emph{$\K$-hyperfield} $\K$ is the set $\{0,1\}$ with the hyperoperation $+$ which has $0$ as its neutral element and satisfies $1+1=\{0,1\}$. The multiplication is the obvious one.
\end{example}

\begin{remark}
It is customary among some authors to call the above hyperfield the Krasner hyperfield. We prefer to avoid that terminology which, in view of Remark \ref{prefix} above, might lead to unnecessary confusion. 
\end{remark}

\begin{example}\label{S}
The \emph{sign hyperfield} $\mathbb{S}$ is the set $\{-1,0,1\}$ with the hyperoperation $+$ which has $0$ as its neutral element and satisfies $x+x=\{x\}$ for $x\in\{-1,1\}$ and $1-1=\{-1,0,1\}$. The multiplication is the obvious one.
\end{example}

\begin{example}\label{W}
The \emph{weak sign hyperfield} $\mathbb{W}$ is the set $\{-1,0,1\}$ with the hyperoperation $+$ which has $0$ as its neutral element and satisfies $x+x=\{x,-x\}$ for $x\in\{-1,1\}$ and $1-1=\{-1,0,1\}$. The multiplication is the obvious one.
\end{example}

Other examples of finite hyperfields are described e.g.\ in \cite{AEH20}. Let us now consider some infinite examples. 

\begin{example}\label{TGamma}
Let $(\Gamma,+,<,0)$ be an ordered abelian group and $\infty$ be a symbol such that $\gamma+\infty=\infty+\gamma=\infty>\gamma$ for all $\gamma\in\Gamma$. For $x,y\in\Gamma\cup\{\infty\}$ such that $x\leq y$ (i.e., $x<y$ or $x=y$), let us denote by $[x,y]$ the set consisting of all $z\in\Gamma\cup\{\infty\}$ such that $x\leq z\leq y$. Consider the hyperoperation $\boxplus$ defined on $\mathcal{T}(\Gamma):=\Gamma\cup\{\infty\}$ as $x\boxplus \infty=\infty\boxplus x=\{x\}$ for all $x\in\mathcal{T}(\Gamma)$ and:
\[
x\boxplus y:=\begin{cases}\{\min\{x,y\}\}&\text{if }x\neq y,\\ [x,\infty]&\text{if }x=y.\end{cases}
\quad\quad(x,y\in\mathcal{T}(\Gamma))
\]
It is not difficult to check that $(\mathcal{T}(\Gamma),\boxplus,+,\infty,0)$ is a hyperfield. The hyperfield $\mathcal{T}(\R,+,0,>)$, where $>$ denotes the standard order of the real numbers, is known as the tropical hyperfield (see \cite[Section 5.3]{Vir10}). Therefore, we call the hyperfields of the form $\mathcal{T}(\Gamma)$ \emph{generalised tropical hyperfields}.
\end{example}

\begin{example}\label{T'Gamma}
Let $(\Gamma,+,<,0)$ be an ordered abelian group and $\infty$ be a symbol such that $\gamma+\infty=\infty+\gamma=\infty>\gamma$ for all $\gamma\in\Gamma$. For $x,y\in\Gamma\cup\{\infty\}$ such that $x<y$, let us denote by $(x,y]$ the set consisting of all $z\in\Gamma\cup\{\infty\}$ such that $x<z\leq y$. Consider the hyperoperation $\boxplus'$ defined on $\mathcal{T}(\Gamma)$ as $x+\infty=\infty+x=\{x\}$ for all $x\in\mathcal{T}(\Gamma)$ and:
\[
x\boxplus' y:=\begin{cases}\{\min\{x,y\}\}&\text{if }x\neq y,\\ (x,\infty]&\text{if }x=y.\end{cases}
\quad\quad(x,y\in\mathcal{T}(\Gamma))
\]
It is not difficult to check that $(\mathcal{T}(\Gamma),\boxplus',+,\infty,0)$ is a hyperfield. We will denote it by $\mathcal{T}'(\Gamma)$ and call it the  \emph{strict generalised tropical hyperfield}.
\end{example}

The above examples are (up to isomorphism) all instances of a general construction which yields a hyperring from a ring and a subgroup of its multiplicative semigroup (see Proposition \ref{exquot} below). This construction was first described by Krasner in \cite{Kra83}. We briefly recall it in the following example.

\begin{example}\label{Kraquot}
Let $A$ be a commutative ring and $T$ a subgroup of the commutative semigroup $(A,\cdot)$. Let $A_T$ denote the set of all multiplicative cosets $xT$ for $x\in A$, in particular $0T=\{0\}$. Krasner showed that by setting
\[
xT+ yT:=\{(x+yt)T\mid t\in T\}
\]
and
\[
xT\cdot yT:=xyT
\]
we have that $(A_T,+,\cdot,0T)$ is a hyperring where $-(xT)=(-x)T$ for all $x\in A$. In addition, if $A$ is a field, then this construction always yields a hyperfield. This is called the \emph{factor (or quotient) hyperring (resp.\ hyperfield)} of $A$ modulo~$T$.
\end{example}

For the next proposition we employ the notion of isomorphism of hyperfields (see Definition \ref{isomorphism} below).

\begin{proposition}\label{exquot}
The following assertions hold:
\begin{itemize}
\item[$(i)$] For any field $K$ with more than two elements, we have that $\K$ is isomorphic to $K_{K^\times}$. 
\item[$(ii)$] For any ordered field $K$ with positive cone $P$, we have that $\mathbb{S}$ is isomorphic to $K_{P}$. 
\item[$(iii)$] If $p>3$ is a prime such that $p\equiv 3\mod 4$, then $\mathbb{W}$ is isomorphic to $(\F_p)_{(\F_p^\times)^2}$.  
\item[$(iv)$] Let $\Gamma$ be an ordered abelian group. For any field $k$ with more than two elements, let $K:=k((t^\Gamma))$ denote the Hahn series field and let $v$ be its canonical $t$-adic valuation. Then $K_{\mathcal{O}^\times_v}$ is isomorphic to $\mathcal{T}(\Gamma)$.
\item[$(v)$] Let $\Gamma$ be an ordered abelian group. Let $K:=\F_2((t^\Gamma))$ denote the Hahn series field and let $v$ be its canonical $t$-adic valuation. Then $K_{\mathcal{O}^\times_v}$ is isomorphic to $\mathcal{T}'(\Gamma)$.
\end{itemize}
\end{proposition}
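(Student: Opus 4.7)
The plan is to verify each item by exhibiting the unique candidate bijection between the two hyperfields, which in every case is forced by the multiplicative cosets appearing on the quotient side. Once the bijection is fixed, compatibility with multiplication is automatic from the rule $xT\cdot yT=xyT$, so only compatibility with the hyperaddition requires work; in each case I will compute $xT+yT=\{(x+yt)T:t\in T\}$ explicitly, distinguishing the possible valuations or signs of the representative $x+yt$.

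For (i), the cosets of $K$ modulo $K^\times$ are $\{0\}$ and $K^\times$, matching $\K$; the only nontrivial sum is $K^\times+K^\times$, where $t=-1$ produces $0$ and any $t\in K^\times\setminus\{-1\}$ (available because $|K|>2$) produces $K^\times$. For (ii), the cosets are $\{0\},P,-P$, matching $\mathbb{S}$; $P+P=\{P\}$ by closure of $P$ under addition, and for $P-P$ one takes $t=1$ to get $0$, $0<t<1$ to get $P$, and $t>1$ to get $-P$. Part (iii) is the main obstacle: since $p\equiv 3\pmod 4$, $-1$ is a non-square, so the cosets of $\F_p$ modulo $Q:=(\F_p^\times)^2$ are $\{0\},Q,-Q$, matching $\mathbb{W}$. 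The relation $0\notin 1+Q$ immediately gives $0\notin Q+Q$, and it remains to show that both $Q$ and $-Q$ appear in $Q+Q=\{(1+s)Q:s\in Q\}$. If $1+Q\subseteq Q$, then inductively $n+Q\subseteq Q$ for every positive integer $n$, contradicting $|Q|=(p-1)/2<p-1$; this secures the coset $-Q$. To secure $Q$, I need some $s\in Q$ with $1+s\in Q$, i.e., a pair of consecutive nonzero quadratic residues modulo $p$; this is the step where the hypothesis $p>3$ is essential (it already fails for $p=3$), and I plan to invoke either a classical Jacobsthal-sum count or a direct enumeration for small $p$ combined with a density argument for larger $p$. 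The sum $Q-Q$ is handled analogously, with $s=1$ supplying the coset $\{0\}$.

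For (iv) and (v), the cosets of $K=k((t^\Gamma))$ modulo $\mathcal{O}_v^\times$ are in bijection with $\Gamma\cup\{\infty\}$ via the valuation, so the candidate isomorphism sends $\gamma$ to the coset of any element of value $\gamma$, with $\infty$ mapped to $\{0\}$. Multiplicativity matches the group operation of $\Gamma$. For the hyperaddition $xT+yT$ with $v(x)=\alpha$ and $v(y)=\beta$, write $x=a_\alpha t^\alpha+\cdots$ and $y=b_\beta t^\beta+\cdots$ and let $t\in\mathcal{O}_v^\times$ have leading coefficient $c\in k^\times$. When $\alpha\neq\beta$, $v(x+yt)=\min(\alpha,\beta)$ for every choice of $t$, yielding a singleton. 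When $\alpha=\beta$, one has $v(x+yt)=\alpha$ precisely when $a_\alpha+b_\alpha c\neq 0$, and otherwise $v(x+yt)>\alpha$; moreover every value $\delta\in(\alpha,\infty]$ is realized by solving $yt=z-x$ for a suitable $z$ of valuation $\delta$ (and $\delta=\infty$ by $t=-x/y$). In case (iv), $|k|>2$ provides some $c\neq-a_\alpha/b_\alpha$, so $\alpha$ is also attained and the sum equals $[\alpha,\infty]$, matching $\mathcal{T}(\Gamma)$; in case (v), $k=\F_2$ forces $c=1=-a_\alpha/b_\alpha$, so $\alpha$ is excluded and the sum equals $(\alpha,\infty]$, matching $\mathcal{T}'(\Gamma)$.
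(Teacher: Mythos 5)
Your arguments for $(i)$, $(ii)$, $(iv)$ and $(v)$ follow essentially the same route as the paper's, identifying the cosets and checking the hyperaddition directly (via choice of $t$ in cases $(i)$--$(ii)$, via leading terms in $(iv)$--$(v)$), and they are correct.

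The genuine gap is in $(iii)$. You correctly identify that the crux is to exhibit $s\in Q:=(\F_p^\times)^2$ with $1+s\in Q$, but you do not prove this: you write that you ``plan to invoke either a classical Jacobsthal-sum count or a direct enumeration for small $p$ combined with a density argument for larger $p$,'' which is a placeholder, not an argument. The paper closes this with a short, completely elementary case analysis that you should be able to reconstruct: if $2\in Q$ then $2=1+1$ already exhibits $1Q\in 1Q+1Q$; otherwise $-2\in Q$ (because $p\equiv 3\bmod 4$ forces exactly one of $\pm x$ to be a square), and then either $3\in Q$, in which case $1=3+(-2)$ writes the square $1$ as a sum of two squares, or $-3\in Q$, in which case $-2=(-3)+1$ writes the square $-2$ as a sum of two squares; in both subcases $1Q\in 1Q+1Q$, and $p>3$ is used exactly to ensure $3\neq 0$ in $\F_p$. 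A secondary omission in $(iii)$: you assert ``$Q-Q$ is handled analogously, with $s=1$ supplying the coset $\{0\}$,'' but you do not verify that both $\pm 1Q$ occur in $1Q-1Q$. This is easy once noticed: $1Q-1Q=\{(s-t)Q\mid s,t\in Q\}$ is closed under negation (swap $s$ and $t$), and it contains a nonzero class because $|Q|=(p-1)/2\geq 3$ when $p\geq 7$, so by the negation symmetry it contains both $1Q$ and $-1Q$; this is essentially the paper's argument as well. Your inductive argument that $1+Q\not\subseteq Q$ (securing $-Q\in Q+Q$) is fine, noting also that $0\notin 1+Q$ because $-1\notin Q$.
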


\begin{proof}
Let $K$ be a field with more than two elements. Then $K_{K^\times}$ contains precisely two elements, the coset $0K^\times$ and the coset $1K^\times$, the latter containing all non-zero elements of $K$. If $x\in K^\times$, then $-x\in K^\times$ as well and thus $0K^\times$ belongs to $1K^\times+1K^\times$. Since by assumption there are $x,y\in K^\times$ with $x\neq y$, also $(x-y)K^\times\neq 0K^\times$ is an element of $1K^\times+1K^\times$. This shows $(i)$.

Let $K$ be an ordered field with positive cone $P$. Here, this means that $P$ is a subset of $K^\times$ such that $P+P,P\cdot P\subseteq P$ and $K^\times$ is the disjoint union of $P$ and $-P$. It follows that $K_P$ contains precisely three\footnote{In the literature, sometimes positive cones are defined as \lq\lq non-negative cones\rq\rq, i.e., containing $0$. Notice that for our statement to hold it is necessary to exclude $0$ from positive cones.} elements: $0P$, $1P$ (containing all the elements of $P$) and $-1P$ (containing all the elements of $-P$). Since $P+P\subseteq P$, we have that $1P+1P$ only contains $1P$. By definition, if $x\in P$, then $-x\in -P$. Thus, $1P-1P$ contains $0P$. It does also contain $1P$ and $-1P$ since e.g.\ $1+1\in P$ (because $1\in P$) and $(1+1)-1=1\in P$ while $1-(1+1)=-1\in -P$. This shows~$(ii)$.

Let $K$ be $\F_p$ (the finite field with $p$ elements) for some prime number $p>3$ such that $p\equiv 3\mod 4$. The prime number $p$ is certainly odd and thus the cardinality of $(K^\times)^2$ is $\frac{p-1}{2}$. If $-1$ is a square in $K$, then $K^\times$ contains an element of order $4$ but this is excluded by the assumption $p\equiv 3\mod 4$. We have that $x\in K^\times$ is a square if and only if $-x$ is not a square. It follows that $K_{(K^\times)^2}$ contains precisely three elements: $0(K^\times)^2$, $1(K^\times)^2$ (containing all the non-zero squares) and $-1(K^\times)^2$ (containing all the non-squares). Moreover, $1(K^\times)^2-1(K^\times)^2$ contains $0(K^\times)^2$. This shows that $-1(K^\times)^2$ is the hyperinverse of $1(K^\times)^2$ in $K_{(K^\times)^2}$. Since $1+1\neq 0$ in $K$ and $1=(1+1)-1$, we have that either $1(K^\times)^2$ or $-1(K^\times)^2$ (depending on which among $1+1$ and $-(1+1)$ is a square) belongs to $1(K^\times)^2-1(K^\times)^2$. By the symmetry of $1(K^\times)^2-1(K^\times)^2$ under the application of $-$ (i.e., multiplication by $-1(K^\times)^2$), we obtain that both $1(K^\times)^2$ and $-1(K^\times)^2$ must be elements of $1(K^\times)^2-1(K^\times)^2$. In a finite field, any non-square is the sum of two (non-zero) squares. Thus, $-1(K^\times)^2$ is an element of $1(K^\times)^2+1(K^\times)^2$. In order to show that $1(K^\times)^2$ is an element of $1(K^\times)^2+1(K^\times)^2$ as well, we distinguish two cases. If $1+1$ is a square in $K$, then $1(K^\times)^2$ is in $1(K^\times)^2+1(K^\times)^2$ trivially. Otherwise, $1+1$ is not a square. In this case, either $1+1+1$ is a square, in which case $1=(1+1+1)-(1+1)$ shows that $1(K^\times)^2$ is an element of $1(K^\times)^2+1(K^\times)^2$, or $-(1+1+1)=-((1+1)+1))$ is a square, in which case $-(1+1)=-(1+1+1)+1$ implies that $1(K^\times)^2$ is an element of $1(K^\times)^2+1(K^\times)^2$. This proves $(iii)$.

Let $k$ be a field with more thatn $2$ elements and consider the Hahn series field $K=k((t^\Gamma))$ equipped with the $t$-adic valuation $v$. A non-zero element $x$ of $K^\times$ is represented as a formal series
\[
x=\sum_{\gamma\in\Gamma}a_\gamma t^\gamma
\]
with well-ordered support, i.e., $\{\gamma\in\Gamma\mid a_\gamma\neq 0\}$ is a well-ordered set with respect to the order of $\Gamma$. The $t$-adic value of $x$ under $v$ is defined to be the minimum of the support of $x$ (cf.\ \cite[Exercise 3.5.6]{PE05}).

The value group of $v$ is thus $\Gamma$ and it follows from general valuation theory that $K^\times/\mathcal{O}_v^\times$ is isomorphic to $\Gamma$ as an ordered abelian group (cf.\ \cite[Section 2.1]{PE05} or Lemma \ref{Valuegroupasquotient} below). For $x\in K^\times$, we have that $x\mathcal{O}_v^\times$ contains all the elements of $K$ with the same value of $x$ under $v$. If $x,y\in K$ are such that $vx\neq vy$ (i.e., $x\mathcal{O}_v^\times\neq y\mathcal{O}_v^\times$), then
\[
v(x+yt)=v(x+y)=\min\{vx,vy\}
\]
for any $t\in\mathcal{O}_v^\times$ (i.e., any $t$ with $vt=0$). This follows from \cite[Section 1.3 (1.3.4)]{PE05} (or Corollary \ref{valringFVK} $(iv)$ below). We now note that if $x\in K$ has any positive value under $v$, then $v(x-1)=0$ and thus $vx=v(1+(x-1))$ is the value of some element of $K$ generating a coset in $K_{\mathcal{O}_v^\times}$ which belongs to $1\mathcal{O}_v^\times+1\mathcal{O}_v^\times$. Moreover, by the assumption on the cardinality of $k$, there exists $a\in k\setminus\{1\}$, so by the definition of the $t$-adic valuation, we have that $va=0$ and $v(1-a)=0$. Since $v(1)=v(-1)=0$, we conclude that the image of $1\mathcal{O}_v^\times-a\mathcal{O}_v^\times=1\mathcal{O}_v^\times+1\mathcal{O}_v^\times$ under $v$ is $[0,\infty]$. By distributivity, this suffices to show that $\mathcal{T}(\Gamma)\simeq K_{\mathcal{O}_v^\times}$ (see also Lemma \ref{vT} below). The proof of $(iv)$ is now also complete.

The proof for $(iv)$ can easily be adapted to prove $(v)$.
\end{proof}

\begin{remark}\label{nonquot}
Let us mention at this point that not all hyperfields are factor hyperfields. This fact was proved by Massouros in \cite{Mas85} who then further improved his results in \cite{Mas1985}. According to \cite[Theorem 6]{LKS23} finite hyperfields such that $1\notin 1+1$ and with the property that $0$ does not belong to any finite sum of $1$ with itself are also not quotient hyperfields.
\end{remark}

\subsection{Subhyperrings}

To choose a good notion of subhyperring is not as straightforward as it might seem. Two options are presented in the next definition.

\begin{definition}\label{subhr}
Let $(R,+,\cdot,0)$ be a hyperring. A subset $S$ of $R$ is a \emph{relational subhyperring} of $R$ if it is multiplicatively closed and with the \emph{induced} multivalued operation:
\begin{align*}
    x+_S y&:=(x+y)\cap S\quad(x,y\in S)
\end{align*}
we have that $(S,+_S,\cdot,0)$ is a hyperring as well.\par
A subset $S$ of $R$ is a \emph{(traditional) subhyperring} of $R$ if $0\in S$ and for all $x,y\in S$ we have that $x-y\subseteq S$ and $xy\in S$.\par
If $R$ is a hyperring with unity $1$, then we say that a relational subhyperring $S$ of $R$ is a \emph{relational subhyperfield} of $R$ if $1\in S$ and $(S,+_S,\cdot,0,1)$ is a hyperfield. In addition, a subhyperring $S$ of $R$ is called a \emph{(traditional) subhyperfield} if $1\in S$ and $(S\setminus\{0\},\cdot,1)$ is an abelian group.
\end{definition}

\begin{remark}
The adjective \lq\lq relational\rq\rq\ has been chosen for the following reason. A multivalued operation can be encoded in a first-order language via the ternary relation $z\in x+y$ (as noticed e.g.\ in \cite{Lee20}). If we consider a first-order language $\mathcal{L}$ with a constant symbol for $0$, a binary function symbol for $\cdot$ and a ternary relation symbol for $+$, then a hyperring naturally becomes a structure on $\mathcal{L}$, which is a model of all the axioms of Definitions \ref{Krasnerhyperrings} and \ref{hypergp} (written as sentences in $\mathcal{L}$). Under this interpretation, the relational subhyperrings of a hyperring $R$ are precisely the submodels of $R$, i.e., the substructures of $R$ (see \cite[Section 2.3]{PD11}) which satisfy the axioms. For more details on the model theoretical point of view let us refer the reader to \cite{PhD22}.\par
Our choices regarding terminology are also motivated by historical reasons. In fact, subhypergroups (and consequently subhyperfields) have been defined long time ago (see e.g.\ Definition 2 and the subsequent remark in \cite{Kra37}\footnote{This article is included in Krasner's collected works \cite[pages 280--406]{BP19}}).   
\end{remark}

\begin{remark}
It is clear that a subhyperring $S$ of a hyperring $(R,+,\cdot,0)$ is a relational subhyperring of $R$ and that $+_S=+$ in this case. On the other hand, not all relational subhyperrings are subhyperrings, as the following example shows.
\end{remark}

\begin{example}\label{subvsstrictsub}
Consider an ordered abelian group $(\Gamma,+,0,<)$. Let $R:=\mathcal{T}(\Gamma)$ be the hyperfield obtained as in Example \ref{TGamma} and consider the subset $S:=\{\infty,0\}$ of $R$. It is straightforward to check that $S$ is a relational subhyperfield of $R$. However, $S$ is not a subhyperfield of $R$ since $0\boxplus 0=[0,\infty]$ is not a subset of $S$.
\end{example}

\begin{remark}\label{subhypfTG}
Let $\Gamma$ be an ordered abelian group. It is not difficult to see that the only subhyperfield of $\mathcal{T}(\Gamma)$ is $\mathcal{T}(\Gamma)$ itself. On the other hand, if $\Delta$ is a subgroup of $\Gamma$, then $\mathcal{T}(\Delta)$ is a relational subhyperfield of $\mathcal{T}(\Gamma)$ (Example \ref{subvsstrictsub} above corresponds to the case $\Delta=\{0\}$). 
\end{remark}

\subsection{Homomorphisms}

Next, let us consider a notion of homomorphism for hyperrings and hyperfields.

\begin{definition}\label{homomorphism}
Let $(R,+^R,\cdot^R,0_R)$ and $(S,+^S,\cdot^S,0_S)$ be hyperrings. We say that a map $\sigma:R\to S$ is a \emph{homomorphism of hyperrings} if
\begin{itemize}
\item[(HH1)] $\sigma(0_R)=0_S$,
\item[(HH2)] $\sigma(x\cdot^R y)=\sigma(x)\cdot^S \sigma(y)$, for all $x,y\in R$,
\item[(HH3)] $\sigma(x+^R y)\subseteq \sigma(x)+^S \sigma(y)$, for all $x,y\in R$.
\end{itemize} 
If $(R,+^R,\cdot^R,0_R,1_R)$ and $(S,+^S,\cdot^S,0_S,1_S)$ are hyperfields, then we say that a homomorphism of hyperrings $\sigma:R\to S$ is a \emph{homomorphism of hyperfields} when the following properties
\begin{itemize}
\item[(HH4)] $\sigma(1_R)=1_S$,
\item[(HH5)] $\sigma(x^{-1})=\sigma(x)^{-1}$, for all $x\in R\setminus\{0\}$,
\end{itemize} 
hold as well.
\end{definition}

\begin{example}
Let $R$ be a hyperring. The map defined by $\sigma(0):=0$ and $\sigma(x):=1$ for $x\in R\setminus\{0\}$ is a homomorphism of hyperrings $R\to\K$. 
\end{example}

\begin{example}
Let $(\Gamma,<,+,0_\Gamma)$ be an ordered abelian group. The map 
\begin{align*}
\iota:\K&\to\mathcal{T}(\Gamma)\\
0_\K&\mapsto\infty\\
1_\K&\mapsto0_\Gamma
\end{align*}
is a homomorphism of hyperfields.
\end{example}

\begin{example}
Let $K$ be a field and $T$ a subgroup of $K^\times$. The function $K\to K_T$ mapping $x\in K$ to $[x]_T\in K_T$ is a homomorphism of hyperfields.
\end{example}

\begin{remark}\label{ker}
Let $\sigma:R\to S$ be a homomorphism of hyperrings. Then the \emph{kernel} of $\sigma$
\[
\ker\sigma:=\{x\in R\mid \sigma(x)=0_S\}
\]
is a subhyperring of $R$. Indeed, for $x,y\in\ker\sigma$, we have that
\[
\sigma(x-y)\subseteq \sigma(x)-\sigma(y)=0_S-0_S=\{0_S\}.
\]
Thus, $\sigma(z)=0_S$ for all $z\in x-y$, i.e., $x-y\subseteq\ker\sigma$.
\end{remark}

\begin{remark}
Let $(R,+,\cdot,0,1)$ be a hyperring and $S\subseteq R$. Consider the inclusion map $\iota:S\hookrightarrow R$. It is straightforward to verify that if $S$ is a relational subhyperring of $R$, then $\iota$ is a homomoprhism of hyperrings.\par
On the other hand, if $(R,+^R,\cdot^R,0_R)$ and $(S,+^S,\cdot^S,0_S)$ are hyperrings with $S\subseteq R$ and the inclusion $\iota:S\hookrightarrow R$ is a homomorphism of hyperrings, then one cannot conclude that $S$ is a relational subhyperring of $R$, as the following example shows.
\end{remark}

\begin{example}\label{SW}
The identity $\mathbb{S}\to\mathbb{W}$ is a homomorphism of hyperrings. However, $\mathbb{S}$ is not a subhyperring of $\mathbb{W}$ as the hypersum of $1$ with itself in $\mathbb{S}$ is $\{1\}$, while the hypersum of $1$ with itself in $\mathbb{W}$ intersected with $\mathbb{S}$~is~$\{-1,1\}$.
\end{example}

The above example motivates the following definition.

\begin{definition}
Let $(R,+^R,\cdot^R,0_R)$ and $(S,+^S,\cdot^S,0_S)$ be hyperrings (resp.\ hyperfields with unities $1_R$ and $1_S$). An injective homomormphism of hyperrings (resp.\ hyperfields) $\sigma:R\to S$ is called an \emph{embedding of hyperrings (resp. hyperfields)} if
\begin{itemize}
\item[(EM1)] $\sigma(x+^R y)=(\sigma(x)+^S \sigma(y))\cap\Ima\sigma$, for all $x,y\in R$.
\end{itemize}
\end{definition}

We leave the straightforward proof of following lemma to the reader.

\begin{lemma}
If $S$ is a relational subhyperring of a hyperring $R$, then the inclusion map $S\hookrightarrow R$ is an embedding of hyperrings.
Conversely, If $R$ and $S$ are hyperrings with $S\subseteq R$ and the inclusion map $S\hookrightarrow R$ is an embedding of hyperrings, then $S$ is a relational subhyperring of $R$.
\end{lemma}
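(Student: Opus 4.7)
The plan is to verify both implications by directly unwinding the definitions of \lq\lq relational subhyperring\rq\rq\ and of \lq\lq embedding\rq\rq.

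For the forward direction, suppose $S$ is a relational subhyperring of the hyperring $(R,+,\cdot,0)$. By Definition \ref{subhr}, $S$ is multiplicatively closed, $0\in S$, and $(S,+_S,\cdot,0)$ is a hyperring with $x+_S y=(x+y)\cap S$. The inclusion $\iota:S\hookrightarrow R$ is clearly injective, and I would check the axioms (HH1)--(HH3) one by one: (HH1) is immediate because $0$ is the same element in $R$ and in $S$; (HH2) holds since the multiplication on $S$ is the restriction of that on $R$; (HH3) follows from $\iota(x+_S y)=(x+y)\cap S\subseteq x+y=\iota(x)+\iota(y)$. Finally, (EM1) is verified by noting $\mathrm{Im}\,\iota=S$, so that
\[
\iota(x+_S y)=(x+y)\cap S=(\iota(x)+\iota(y))\cap\mathrm{Im}\,\iota.
\]

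For the converse, suppose $R,S$ are hyperrings with $S\subseteq R$ and $\iota:S\hookrightarrow R$ an embedding. Write $(S,+^S,\cdot^S,0_S)$ for the hyperring structure on $S$. From (HH1) we have $0_S=0_R$, and from (HH2), for all $x,y\in S$,
\[
x\cdot^S y=\iota(x\cdot^S y)=\iota(x)\cdot\iota(y)=xy\in S,
\]
so $S$ is multiplicatively closed and $\cdot^S$ agrees with the restriction of the multiplication on $R$. Applying (EM1) to arbitrary $x,y\in S$ and again using $\mathrm{Im}\,\iota=S$ gives
\[
x+^S y=\iota(x+^S y)=(\iota(x)+\iota(y))\cap S=(x+y)\cap S,
\]
which means the given hyperoperation $+^S$ coincides with the induced one from Definition \ref{subhr}. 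Since $(S,+^S,\cdot^S,0_S)$ is by hypothesis a hyperring, it follows that $S$ endowed with the induced multivalued operation is a hyperring, and hence $S$ is a relational subhyperring of $R$.

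No serious obstacle is expected: the content of the statement is exactly that the relation $x+_S y=(x+y)\cap S$ in the definition of relational subhyperring is, once transported through $\iota$, the same as the condition (EM1). The only point requiring slight attention is the identification $\mathrm{Im}\,\iota=S$ that allows (EM1) to be rewritten as the induced-operation formula, together with the observation that the structure $(S,+^S,\cdot^S,0_S)$ appearing in the converse has no a priori relation to $R$ beyond $S\subseteq R$, so the compatibility of operations must be deduced from the axioms of embedding rather than postulated.
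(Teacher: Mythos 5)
Your proof is correct and is precisely the straightforward unwinding of the definitions that the paper has in mind; indeed the paper explicitly leaves this verification to the reader. Both directions are handled properly, and you correctly identify the one point that deserves care in the converse, namely that $(S,+^S,\cdot^S,0_S)$ is a priori an abstract hyperring structure and its agreement with the restriction of the operations of $R$ must be extracted from (HH1), (HH2), and (EM1) together with the identification $\mathrm{Im}\,\iota = S$.
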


\begin{definition}\label{isomorphism}
A homomorphism of hyperrings (resp.\ hyperfields) $\sigma:R\to S$ is called an \emph{isomorphism of hyperrings (resp.\ hyperfields)} if it is a bijective map and $\sigma^{-1}:S\to R$ is also a homomorphism of hyperrings (resp.\ hyperfields). We say that two hyperrings (resp.\ hyperfields) $R$ and $S$ are \emph{isomorphic}  and we write $R\simeq S$ if there exists an isomorphism $\sigma:R\to S$.
\end{definition}

\begin{lemma}\label{isosuremb}
Let $R$ and $S$ be hyperrings (resp.\ hyperfields) and denote by $+^R$ and $+^S$ their hyperoperations, respectively. A map $\sigma:R\to S$ is an isomorphism of hyperrings (resp.\ hyperfields) if and only if $\sigma$ is a surjective embedding of hyperrings (resp.\ hyperfields).
\end{lemma}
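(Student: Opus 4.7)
The plan is to prove the two implications separately, handling the hyperring and hyperfield cases in parallel since the axioms (HH4)--(HH5) behave identically under the same kind of bijective reasoning.

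For the forward direction, assume $\sigma$ is an isomorphism, so in particular $\sigma$ is bijective and both $\sigma$ and $\sigma^{-1}$ satisfy (HH1)--(HH3) (and (HH4)--(HH5) in the hyperfield case). Injectivity is immediate, and surjectivity implies $\Ima\sigma = S$, so to verify (EM1) it suffices to upgrade the inclusion (HH3) to an equality $\sigma(x+^R y) = \sigma(x)+^S \sigma(y)$. The inclusion $\subseteq$ is (HH3) itself. For $\supseteq$, pick $w \in \sigma(x)+^S \sigma(y)$, write $w = \sigma(z)$ for the unique $z \in R$, and apply (HH3) to $\sigma^{-1}$ to get
\[
z = \sigma^{-1}(w) \in \sigma^{-1}\bigl(\sigma(x)+^S \sigma(y)\bigr) \subseteq \sigma^{-1}(\sigma(x)) +^R \sigma^{-1}(\sigma(y)) = x +^R y,
\]
so $w = \sigma(z) \in \sigma(x+^R y)$, as desired.

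For the converse, assume $\sigma$ is a surjective embedding. Then $\sigma$ is bijective by definition, so $\sigma^{-1}\colon S \to R$ is a well-defined function. I would check each homomorphism axiom for $\sigma^{-1}$ by pulling back along $\sigma$: writing arbitrary $x', y' \in S$ as $x' = \sigma(x), y' = \sigma(y)$, axioms (HH1), (HH2), and in the hyperfield case (HH4), (HH5), follow directly by applying $\sigma^{-1}$ to the corresponding identities for $\sigma$ and using injectivity. For (HH3), the point is that surjectivity of $\sigma$ makes (EM1) collapse to $\sigma(x +^R y) = \sigma(x)+^S \sigma(y) = x' +^S y'$, because $\Ima\sigma = S$. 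Applying the bijection $\sigma^{-1}$ yields
\[
\sigma^{-1}(x' +^S y') = x +^R y = \sigma^{-1}(x') +^R \sigma^{-1}(y'),
\]
which is stronger than (HH3). Hence $\sigma^{-1}$ is a homomorphism of hyperrings (resp.\ hyperfields), and $\sigma$ is an isomorphism.

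There is no real obstacle here: the statement is essentially a bookkeeping exercise reconciling the two definitions, and the only subtle point is recognising that surjectivity is exactly what is needed to promote the weak containment (HH3) into the equality built into (EM1), and vice versa. I would keep the proof to a single paragraph per direction and note that the hyperfield clauses (HH4), (HH5) are handled verbatim by the same bijective transport argument.
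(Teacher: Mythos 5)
Your proof follows the same route as the paper's: in the forward direction you upgrade (HH3) to an equality by applying (HH3) to $\sigma^{-1}$, and in the converse you observe that surjectivity collapses (EM1) to an equality and then transport it through $\sigma^{-1}$. The argument is correct and matches the paper, just spelled out in a bit more detail.
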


\begin{proof}
If $\sigma$ is an isomorphism, then it is bijective by definition, in particular $\Ima\sigma=S$. Since $\sigma$ is a homomorphism, we have that $\sigma(x+^R y)\subseteq\sigma(x)+^S\sigma(y)$ holds. On the other hand, since $\sigma^{-1}$ satisfies (HH3) as well, we obtain that
\[
\sigma^{-1}(\sigma(x)+^S\sigma(y))\subseteq  x+^Ry.
\]
Applying $\sigma$ to both sides then yields $\sigma(x+^R y)\supseteq\sigma(x)+^S\sigma(y)$.\par
Assume now that $\sigma$ is a surjective embedding and let us show that $\sigma^{-1}$ is a homomorphism. Clearly, (HH1) and (HH2) hold for $\sigma^{-1}$. Since by (EM1) and the surjectivity we have that $\sigma(x+^Ry)=\sigma(x)+^S\sigma(y)$, property (HH3) follows applying $\sigma^{-1}$ to both sides and using again the surjectivity of $\sigma$.
\end{proof}

\begin{remark}
Note that a bijective homomorphism is not necessarily an isomorphism as it can be deduced from Example~\ref{SW}.
\end{remark}

\begin{example}\label{iso}
Let $(\Gamma,<,+,0_\Gamma)$ be an ordered abelian group. Consider the relational subhyperfield $S:=\{\infty,0\}$ of $\mathcal{T}(\Gamma)$ as in Example \ref{subvsstrictsub}. Then $(S,+_S,\cdot,\infty,0_\Gamma)$ is isomorphic to $\K$.
\end{example}

As usual we identify isomorphic structures: Examples \ref{subvsstrictsub} and \ref{iso} can be expressed by saying that $\K$ is a subhyperfield of $\mathcal{T}(\Gamma)$ for any ordered abelian group $\Gamma$.

\subsection{Hyperideals}
We now briefly discuss how the classical ring theory notion of ideal generalises in the multivalued setting.

\begin{definition}[Definition 2.1 in \cite{BC17} and Definition 2.9 in \cite{KLS22}]
Let $R$ be a hyperring. A subhyperring $I$ of $R$ is a
\emph{hyperideal of $R$} 
if it satisfies
\begin{itemize}
\item[(HID1)]  $xy \in I$, for all $x \in R$ and $y \in I$. 
\end{itemize}
\end{definition}

\begin{lemma}\label{hypid}
Let $R$ be a hyperring with no zero divisors, i.e., $xy=0_R$ implies $x=0_R$ or $y=0_R$. A relational subhyperring $I$ of $R$ satisfying (HID1) is a hyperideal of $R$.
\end{lemma}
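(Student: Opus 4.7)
The plan is to upgrade the relational subhyperring $I$ to a traditional one: assuming (HID1) and no zero divisors in $R$, I will show that for all $y,z\in I$ the entire hypersum $y+z$ (computed in $R$) lies in $I$, not merely its intersection with $I$. Combined with $0\in I$, multiplicative closure, and the elementary fact that $-I\subseteq I$, this matches the definition of a (traditional) subhyperring in Definition~\ref{subhr}, and together with (HID1) it then gives a hyperideal.

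I would begin with two preliminary observations. Since $(I,+_I,0)$ is a canonical hypergroup, for each $y\in I$ there exists $y'\in I$ with $0\in(y+y')\cap I$, and the uniqueness clause of (CH3) applied in $R$ forces $y'=-y$; hence $-I\subseteq I$. Combining the commutativity of $\cdot$ with (HID1) gives $xa\in I$ for every $x\in I$ and every $a\in R$, so in particular the set $x(y+z)\subseteq R$ is actually contained in $I$ whenever $x\in I$ and $y,z\in R$.

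The heart of the argument is to apply distributivity (HR3) inside the hyperring $(I,+_I,\cdot,0)$. Fix $y,z\in I$; the case $I=\{0\}$ is trivial since $0+0=\{0\}$ in any canonical hypergroup, so assume $I\neq\{0\}$ and pick $x\in I$ with $x\neq 0$. Expanding the identity $x(y+_I z)=xy+_I xz$ in terms of the operations inherited from $R$ gives
\[
\{xa:a\in(y+z)\cap I\}=x(y+z)\cap I.
\]
By the preliminary observation $x(y+z)\subseteq I$, so the right-hand side coincides with $\{xa:a\in y+z\}$. Since $R$ has no zero divisors, multiplication by $x\neq 0$ is injective on $R$, and comparing the two image sets forces $(y+z)\cap I=y+z$, i.e., $y+z\subseteq I$ as required.

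The step I expect to be the main subtlety is recognising that distributivity in $(I,+_I,\cdot,0)$ is precisely what does the work; a naive attempt using only the hypergroup axioms on $(I,+_I)$ would not force whole sums $y+z$ into $I$, as illustrated by Example~\ref{subvsstrictsub}, where $\{0,\infty\}\subseteq\mathcal{T}(\Gamma)$ is a relational subhyperfield that fails to be a traditional subhyperfield (and, consistently with the lemma, also fails to satisfy (HID1), so no contradiction arises).
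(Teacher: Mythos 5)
Your proof is correct and takes essentially the same approach as the paper: both hinge on applying distributivity inside $(I,+_I,\cdot,0)$ to identify $x\cdot\bigl((y+z)\cap I\bigr)$ with $x(y+z)\cap I$, and then cancelling a nonzero factor via the no-zero-divisor hypothesis. The only cosmetic difference is that the paper multiplies by one of the two summands $y$ itself (so it never needs the $I=\{0\}$ edge case), whereas you fix an arbitrary nonzero $x\in I$ as the multiplier.
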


\begin{proof}
Pick $x,y\in I$ and let $z\in x-y$. It suffices to prove that $z\in I$. If $x=0_R$ or $y=0_R$, then there is nothing to show. Otherwise, by distributivity in $R$ we have that $zy\in xy-y^2$ and by (HID1) we obtain that $zy\in I$, since $y\in I$. Therefore, $zy\in (xy-y^2)\cap I$. Now, distributivity in $I$ (with respect to the induced multivalued operation $+_I$) yields $zy\in\bigl((x-y)\cap I\bigr)y$, so there exists $z'\in(x-y)\cap I$ such that $zy=z'y$. Hence, $0_R\in zy-z'y=(z-z')y$. Since $R$ has no zero divisors, this implies that $0_R\in z-z'$ and hence $z=z'\in I$. 
\end{proof}

\begin{remark}\label{Jun-ker}
We wish to justify the choice of requiring hyperideals to be traditional subhyperrings. In \cite[Section 3.1]{Jun18}, Jun provides a generalisation of the classical quotient construction of a ring modulo an ideal in the multivalued setting. One property that is certainly desirable to preserve is for the canonical epimorphism from a hyperring $R$ to the quotient hyperring $R/I$ modulo a hyperideal $I$ to be a homomorphism of hyperrings $R\to R/I$ having $I$ as its kernel. We have no examples of hyperrings with a relational subhyperring satisfying (HID1), but at the same time we did not succeed in proving Lemma \ref{hypid} without the assumption on zero divisors. If such an example would exist, then that relational subhyperring could not be the kernel of a homomorphism of hyperrings by Remark \ref{ker} and so we should not call it a hyperideal.
\end{remark}

\begin{remark}
Let $\sigma:R\to S$ be a homomorphism of hyperrings. Then it is easy to show that $\ker\sigma$ is a hyperideal of $R$. Conversely, for all hyperideals $I$ of a hyperring $R$ the map $\sigma:R\to\K$ defined as
\[
\sigma(x):=\begin{cases}0&\text{if }x\in I,\\ 1&\text{ otherwise.}\end{cases}
\]
is a homomorphism of hyperrings such that $\ker\sigma=I$.
\end{remark}

The following statements can be shown to hold as in the classical theory of rings.
\begin{lemma}[Lemma 2.12 in \cite{KLS22}]\label{easy}
If a hyperideal $I$ of a hyperring $R$ contains a unit, then $I=R$.
\end{lemma}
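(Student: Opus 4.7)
The plan is to exploit (HID1) twice, first to pull the multiplicative unity $1$ into $I$, and then to absorb every element of $R$ into $I$. The argument is essentially identical to the classical proof in ring theory, and no subtleties arise from the multivalued addition since (HID1) is a purely multiplicative closure condition.

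More precisely, I would begin by noting that the statement tacitly assumes $R$ has a unity $1$, since the notion of a unit is defined only for hyperrings with unity (see Definition \ref{Krasnerhyperrings}). Suppose then that $u \in I$ is a unit, so there exists some $u^{-1} \in R$ with $u^{-1} u = 1$. Applying (HID1) with $x := u^{-1} \in R$ and $y := u \in I$ yields $1 = u^{-1} u \in I$. Now, for an arbitrary $x \in R$, applying (HID1) once more with $y := 1 \in I$ gives $x = x \cdot 1 \in I$. Therefore $R \subseteq I$, and the reverse inclusion holds by definition of a hyperideal.

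I expect no serious obstacle: neither the canonical hypergroup axioms nor the distributivity axiom (HR3) are used, and the traditional (as opposed to relational) subhyperring requirement for hyperideals plays no role either. The argument is a one-line consequence of the absorption property.
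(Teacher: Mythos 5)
Your proof is correct and follows exactly the route the paper has in mind: the paper states before this lemma that ``the following statements can be shown to hold as in the classical theory of rings'' and gives no further argument, and your two applications of (HID1) — pulling $1$ into $I$ via $u^{-1}u$, then absorbing each $x = x\cdot 1$ — is precisely that classical argument. Your remark that the statement tacitly presupposes $R$ has a unity (since ``unit'' is only defined for hyperrings with unity) is also an accurate reading of Definition~\ref{Krasnerhyperrings}.
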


\begin{corollary}[Corollary 2.13 in \cite{KLS22}]\label{hypidfields}
The only hyperideals of a hyperfield $F$ are $\{0\}$ and $F$.
\end{corollary}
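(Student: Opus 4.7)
The plan is to derive the corollary as an immediate consequence of Lemma~\ref{easy}, exploiting the defining property of a hyperfield that every non-zero element is a unit. So I would let $I$ be a hyperideal of the hyperfield $F$ and argue by a simple dichotomy: either $I = \{0\}$, in which case there is nothing to prove, or $I$ contains some element $x \neq 0$.

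In the second case, since $F$ is a hyperfield, by Definition~\ref{Krasnerhyperrings} the set $F \setminus \{0\}$ coincides with $F^\times$, the multiplicative group of units. Hence $x \in F^\times$, i.e., $x$ is a unit of $F$. Applying Lemma~\ref{easy} to the hyperideal $I$ of the hyperring $F$, which contains the unit $x$, we conclude that $I = F$.

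I do not anticipate any obstacle: the entire content of the corollary is the translation of \emph{``a hyperfield has no zero divisors and every non-zero element is invertible''} into the language of hyperideals, with Lemma~\ref{easy} doing all the work. The only thing to be mindful of is that the definition of hyperideal requires $I$ to be a (traditional) subhyperring, hence in particular $0 \in I$, which is why the case $I=\{0\}$ appears as the other possibility and not the empty set.
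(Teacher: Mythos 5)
Your proof is correct and follows exactly the intended derivation: the paper states this as a corollary immediately after Lemma~\ref{easy}, relying on the fact that in a hyperfield every non-zero element is a unit. No discrepancy.
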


\begin{definition}[Definition 2.14 \textit{(ii)} in \cite{KLS22}]
Let $I$ be a hyperideal of a hyperring $R$. Then $I$ is called \emph{maximal} if $I\subsetneq R$ and for all hyperideals $J$ of $R$ we have that $I\subsetneq J$ implies $J=R$.
\end{definition}

We recall without proof the following result.

\begin{proposition}[Proposition 2.16 $(ii)$ in \cite{KLS22}]\label{maxid}
Assume that $R$ is a hyperring with unity. Let $I$ be a hyperideal $R$. Then $I$ is maximal if and only if $R/I$ is a hyperfield.
\end{proposition}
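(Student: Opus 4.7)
The strategy is to mimic the classical argument for rings, using the quotient construction $R/I$ referenced earlier (cf.\ the remark on Jun's work) together with the canonical surjective homomorphism of hyperrings $\pi\colon R\to R/I$ with $\ker\pi=I$. The two workhorses are Corollary \ref{hypidfields}, telling us that the only hyperideals of a hyperfield are $\{0\}$ and the whole thing, and Lemma \ref{easy}, telling us that a hyperideal containing a unit is everything.

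For the implication ($\Rightarrow$), assume $I$ is maximal and pick a non-zero element $\bar x=x+I$ of $R/I$, so $x\notin I$. I would form the set $J:=Rx+I=\bigcup_{r\in R,\,y\in I}(rx+y)$ and argue that $J$ is a hyperideal of $R$: closure under the hyperoperation and absorption by arbitrary elements of $R$ follow from distributivity (HR3) and the fact that $I$ itself is a hyperideal. Since $x=1\cdot x+0\in J$ while $x\notin I$, we have $I\subsetneq J$, so maximality of $I$ forces $J=R$. In particular $1\in rx+y$ for some $r\in R$ and $y\in I$. Applying the homomorphism $\pi$, property (HH3) gives $1_{R/I}=\pi(1)\in\pi(r)\bar x+\pi(y)=\pi(r)\bar x+0_{R/I}=\{\pi(r)\bar x\}$, using the lemma that $a+0=\{a\}$ in any canonical hypergroup. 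Hence $\pi(r)$ is a multiplicative inverse of $\bar x$, and since $R/I$ already inherits a canonical hypergroup structure and a commutative multiplication from $R$ via $\pi$ (and is non-trivial because $I\subsetneq R$), it is a hyperfield.

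For the implication ($\Leftarrow$), assume $R/I$ is a hyperfield, and let $J$ be any hyperideal of $R$ with $I\subsetneq J\subseteq R$. The image $\pi(J)$ is a hyperideal of $R/I$ (a routine check using the fact that $\pi$ is a surjective hyperring homomorphism sending (HH3)-inclusions forward), and it is non-zero since $J$ strictly contains $\ker\pi=I$. By Corollary \ref{hypidfields}, $\pi(J)=R/I$, so there exists $j\in J$ with $\pi(j)=1_{R/I}$, i.e.\ $1\in j+I\subseteq j+J\subseteq J$ since $I\subseteq J$ and $J$ absorbs its own hypersums. Thus $J$ contains the unit $1$, and Lemma \ref{easy} yields $J=R$, proving maximality of $I$.

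The main obstacle, and the only place where the multivalued setting demands caution, is the bookkeeping around the quotient: one must know that $\pi$ is a well-defined surjective hyperring homomorphism with $\ker\pi=I$, and that images and preimages under $\pi$ of hyperideals remain hyperideals. These are the standard correspondence-type facts for the Krasner quotient construction, but unlike in the classical case a ``linear combination'' identity such as $1\in rx+y$ is a set-membership rather than an equation, so one must invoke the neutral-element lemma $a+0=\{a\}$ to squeeze the two-element hypersum down to a single element and extract the inverse cleanly.
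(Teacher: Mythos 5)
The paper states this proposition \emph{without proof}, explicitly deferring to \cite{KLS22} (``We recall without proof the following result''), so there is no in-text argument for me to compare your proposal against; I can only assess it on its own merits. Your plan is correct and is the expected transposition of the classical ring-theoretic argument to the multivalued setting, with the hypersum-membership subtleties handled appropriately. A few points worth pinning down when writing it up in full: in ($\Rightarrow$), the closure of $J=Rx+I$ under hyperdifference requires a short computation using (HR3) together with the canonical-hypergroup identities $-(a+b)=(-a)+(-b)$ and $(-r)x=-(rx)$, which give $a-b\subseteq(r_1-r_2)x+(y_1-y_2)\subseteq Rx+I$ for $a\in r_1x+y_1$ and $b\in r_2x+y_2$; and the extraction $1_{R/I}=\pi(r)\bar x$ is exactly as you say, with the neutral-element lemma $\pi(r)\bar x+0_{R/I}=\{\pi(r)\bar x\}$ collapsing the membership to an equality. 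In ($\Leftarrow$), the deduction $\pi(j)=1_{R/I}\Rightarrow 1\in j+I$ rests on the fact that the cosets modulo a hyperideal partition $R$ (so $j+I=1+I$ as sets) combined with $1\in 1+0\subseteq 1+I$; and the closure $j+J\subseteq J$ uses that $-J=J$ for any subhyperring, hence $J+J=J-J\subseteq J$. Finally, the claim that $\pi(J)$ is a hyperideal of $R/I$ deserves a word of justification: it holds here specifically because $\pi$ is the quotient map for which $\pi(x+y)=\pi(x)\oplus\pi(y)$ holds as an \emph{equality} by the very definition of $\oplus$; for a general surjective homomorphism of hyperrings, axiom (HH3) gives only an inclusion, and the image of a hyperideal need not be closed under the target's hyperdifference.
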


\begin{example}[\cite{Mit73}]\label{scalars}
Let $R$ be a hyperring with unity. An element $s$ of $R$ is called a \emph{scalar} if $x+s$ is a singleton for all $x\in R$. As in the proof of Lemma \ref{1-1=0}, it is not difficult to show that $s\in R$ is a scalar if and only if $s-s=\{0\}$. Interestingly, the set of all scalar elements of $R$ forms a hyperideal of $R$.\par 
Assume now that $R$ is a hyperring with unity. If $1$ is a scalar, then by Lemma \ref{easy} the hyperideal of scalars is $R$ and thus $R$ is a ring (as all of its elements are scalars). Moreover, we deduce from Corollary \ref{hypidfields} that if $F$ is a hyperfield, but not a field, then the only scalar of $F$ is $0$, i.e., $x-x$ is not a singleton for all $x\in F^\times$.
\end{example}

\section{Valued hyperfields}\label{sec2}

Some of the results presented in this section can be found in \cite{KLS22}. Some proofs have been improved, others we will repeat for the convenience of the reader.

\subsection{Valuations}
The next definition is a straightforward generalisation of the definition of valuation for fields.
\begin{definition}[Definition 4.1 in \cite{KLS22}]\label{hyperval}
Take a hyperfield $F$ and an ordered abelian group $\Gamma$ (written additively). 
A surjective map $v:  F\to \Gamma \cup \{\infty\}$ is called a \emph{valuation on $F$} if it has the following properties:
\begin{itemize}
\item[(V1)] $vx = \infty\iff x = 0$, for all $x\in F$,
\item[(V2)] $v(xy) = vx+ vy$, for all $x,y\in F$,
\item[(V3)] $z \in x+y~\Longrightarrow ~vz \ge \min\{vx,vy\}$, for all $x,y,z\in F$.
\end{itemize}
If $v$ is a valuation on a hyperfield $F$ we call $(F,v)$ a \emph{valued hyperfield} and we denote by $vF$ the ordered abelian group $v(F^\times)$, i.e., the \emph{value group} of $(F,v)$.
\end{definition}

\begin{example}\label{extriv}
Let $F$ be a hyperfield. The \emph{trivial} valuation $vx=0$ for all $x\in F^\times$ is always a valuation on $F$ with value group $\{0\}$.
\end{example}

The next result provides further examples of valued hyperfields.

\begin{lemma}[Corollary 4.2 in \cite{KLS22}]\label{vT}
Let $(K,v)$ be a valued field and take a subgroup $T$ of $K^\times$. Denote by $\mathcal{O}_v^\times$ the group of units of the valuation ring of $K$, i.e., $\mathcal{O}_v^\times:=\{x\in K\mid vx=0\}$.
If $T\subseteq\mathcal{O}_v^\times$, then
\begin{align*}
v_T: K_T&\to vK\cup\{\infty\}\\
xT&\mapsto vx 
\end{align*}
is a valuation on $K_T$.
\end{lemma}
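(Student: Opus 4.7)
The plan is to verify in turn: (a) well-definedness of $v_T$; (b) surjectivity; and (c) the three axioms (V1), (V2), (V3). The key use of the hypothesis $T\subseteq\mathcal{O}_v^\times$, i.e.\ $vt=0$ for all $t\in T$, will appear both in well-definedness and in (V3).

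First I would check that $v_T$ is well-defined. If $xT=yT$ in $K_T$, then $y=xt$ for some $t\in T$, so
\[
vy=v(xt)=vx+vt=vx+0=vx,
\]
since $t\in T\subseteq\mathcal{O}_v^\times$ forces $vt=0$. Surjectivity onto $vK\cup\{\infty\}$ follows because $v$ itself is surjective: for $\gamma=vx$, one has $v_T(xT)=\gamma$, and $v_T(0T)=v(0)=\infty$.

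Next I verify the three axioms. Axiom (V1) is immediate since the zero of $K_T$ is $0T=\{0\}$ and $v_T(xT)=\infty\iff vx=\infty\iff x=0\iff xT=0T$. For (V2), using the definition of multiplication in $K_T$ from Example \ref{Kraquot},
\[
v_T(xT\cdot yT)=v_T(xyT)=v(xy)=vx+vy=v_T(xT)+v_T(yT).
\]
The main step is (V3). Assume $zT\in xT+yT$; by the definition of the factor hyperring in Example \ref{Kraquot}, this means $zT=(x+ys)T$ for some $s\in T$. Hence $z=(x+ys)t$ for some $t\in T$, and since $vs=vt=0$,
\[
vz=v(x+ys)+vt=v(x+ys)\geq\min\{vx,v(ys)\}=\min\{vx,vy\},
\]
using the classical (V3) for the valuation $v$ on the field $K$. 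Since $vx=v_T(xT)$ and $vy=v_T(yT)$, this gives $v_T(zT)\geq\min\{v_T(xT),v_T(yT)\}$, as required.

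I expect no genuine obstacle; the statement is essentially a check. The only delicate point is that in (V3) one must remember the exact shape of elements of $xT+yT$ as prescribed in Example \ref{Kraquot} — namely cosets of the form $(x+yt)T$ rather than arbitrary sums — and then exploit $T\subseteq\mathcal{O}_v^\times$ to discard the contribution of $t$ to the valuation.
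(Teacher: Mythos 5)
Your proof is correct and follows essentially the same route as the paper's: well-definedness and the routine axioms are handled from the corresponding properties of $v$, and the only substantive step is (V3), which both you and the paper establish by computing $v(x+yt)\geq\min\{vx,vy\}$ for $t\in T$ using $vt=0$. Your write-up is just a more explicit spelling-out of the same argument.
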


\begin{proof}
The assumption $T\subseteq\mathcal{O}_v^\times$ guarantees that the map $v_T$ is well-defined. Moreover, surjectivity, (V1) and (V2) follow immediately from the corresponding properties of $v$ and the definition of the factor hyperfield $K_T$. It remains to verify (V3). Take $x,y\in K$ and $t\in T$. We obtain that
\[
v(x+yt)\geq\min\{vx,v(yt)\}=\min\{vx,vy+vt\}=\min\{vx,vy\}.
\]
From this it easily follows that (V3) holds for $(K_T,v_T)$.
\end{proof}

The next lemma gives an alternative definition of valuation on (hyper)fields (cf.\ \cite[Example 1.8]{BL21}).

\begin{lemma}\label{valhom}
Let $F$ be a hyperfield and $\Gamma$ an ordered abelian group. Then $(F,v)$ is a valued hyperfield with $vF=\Gamma$ if and only if the map $v:F\to\mathcal{T}(\Gamma)$ is a surjective homomorphism of hyperfields.
\end{lemma}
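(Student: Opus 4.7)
The plan is to prove the two implications separately by unpacking the definition of $\mathcal{T}(\Gamma)$: its multiplication is the addition of $\Gamma$ (with $\infty$ absorbing), its multiplicative identity is $0_\Gamma$, its additive neutral is $\infty$, and the hypersum $\alpha \boxplus \beta$ is $\{\min\{\alpha,\beta\}\}$ when $\alpha \neq \beta$ and $[\alpha,\infty]$ when $\alpha=\beta$.

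For the forward direction, assume $(F,v)$ is a valued hyperfield with $vF=\Gamma$. The conditions (HH1), (HH2), (HH4) and (HH5) are essentially rewrites of (V1), (V2) and the fact that $v(1_F)=0_\Gamma$ (which follows from (V2) applied to $1\cdot 1 = 1$ together with $1 \neq 0$). Surjectivity onto $\mathcal{T}(\Gamma)$ is immediate from $vF=\Gamma$ and $v(0)=\infty$. The real content lies in (HH3): given $z\in x+_F y$, I must show $v(z)\in v(x)\boxplus v(y)$. The case $v(x)=v(y)$ reduces to (V3), since then $v(x)\boxplus v(y)=[v(x),\infty]$. The case $v(x)\neq v(y)$ requires upgrading (V3) to an equality, which is the sharp form of the ultrametric inequality. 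I would prove this by first noting that $v(-a)=v(a)$ for every $a\in F$ (since $v((-1)^2)=v(1)=0$ forces $v(-1)=0$ in an ordered abelian group) and then, assuming without loss of generality $v(x)<v(y)$, using (CH4) to rewrite $z\in x+y$ as $x\in z-y$; applying (V3) to this latter relation gives $v(x)\geq\min\{v(z),v(y)\}$, which combined with $v(z)\geq v(x)$ forces $v(z)=v(x)=\min\{v(x),v(y)\}$.

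For the converse, assume $v:F\to\mathcal{T}(\Gamma)$ is a surjective homomorphism of hyperfields. Then (HH5) ensures $v$ restricted to $F^\times$ takes values in the unit group $\mathcal{T}(\Gamma)^\times=\Gamma$, and conversely $v(0)=\infty$ by (HH1); combined with surjectivity, this gives $vF=\Gamma$ and (V1). Axiom (V2) is just (HH2) translated through the identification of multiplication in $\mathcal{T}(\Gamma)$ with addition in $\Gamma$. For (V3), (HH3) places $v(z)$ into $v(x)\boxplus v(y)$ for any $z\in x+_F y$, and an inspection of the two cases in the definition of $\boxplus$ shows in both cases that $v(z)\geq\min\{v(x),v(y)\}$.

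The only step that is not a pure translation of definitions is the sharpening of (V3) needed for (HH3) in the forward direction; this is where I expect the main work, and it is handled exactly as in classical valuation theory, the crucial ingredient being the canonical hypergroup axiom (CH4) which allows one to invert the membership $z\in x+y$ into $x\in z-y$.
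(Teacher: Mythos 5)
Your proof is correct and follows essentially the same route as the paper: translate each axiom across the identification of the operations of $\mathcal{T}(\Gamma)$ with the arithmetic of $\Gamma$. Two points of divergence are worth noting, both in your favour. In the forward direction the paper simply asserts that (HH3) follows from (V3), whereas you correctly observe that when $v(x)\neq v(y)$ one needs the sharp equality $v(z)=\min\{v(x),v(y)\}$ for $z\in x+y$, not merely the inequality (V3); this sharp form is exactly Corollary~\ref{valringFVK}~$(iv)$, which the paper states \emph{after} this lemma and declares provable ``on the basis of'' it, so by deducing it directly from (V3) and (CH4) you dodge a small expository circularity. In the converse direction the paper obtains (V1) by observing that $\ker v$ is a proper hyperideal of the hyperfield $F$ and invoking Corollary~\ref{hypidfields}, while you argue multiplicatively that (HH2), (HH4) and (HH5) force $v(F^\times)\subseteq\Gamma$ since $\infty$ is not invertible in $\mathcal{T}(\Gamma)$; both are valid, yours being slightly more elementary and self-contained.
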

\begin{proof}
Assume that $(F,v)$ is a valued hyperfield. Then $v:F\to\mathcal{T}(vF)$ is a surjective map, (HH1) follows from (V1), (HH2) follows from (V2), (HH3) follows from (V3). Property (HH4) follows because $vx=vx+v(1)$ for all $x\in F$ and (HH5) follows since, for $x\in F^\times$, we have that 
\[
0=v(1)=v(xx^{-1})=vx+v(x^{-1}).
\] 
Thus, $v$ is a homomorphism of hyperfields.\par
Conversely, let $F$ be a hyperfield and $v:F\to\mathcal{T}(\Gamma)$ a surjective homomoprhism of hyperfields. Property (V2) for $v$ follows from (HH2) and (V3) follows from (HH3) together with the definition of the hyperoperation of $\mathcal{T}(\Gamma)$. Property (V1) follows from Corollary \ref{hypidfields} since $v(1)=0\neq\infty$ and thus $\ker v=\{x\in F\mid vx=\infty\}$ is a proper hyperideal of $F$. We proved that $v$ is a valuation on $F$ with $vF=\Gamma$.
\end{proof}

On the basis of the previous lemma, the following result is almost immediate to verify. Thus, we~omit~its~proof.

\begin{corollary}[Lemma 4.5 in \cite{KLS22}]\label{valringFVK}
Let $v:F\to\Gamma\cup\{\infty\}$ be a valuation on a hyperfield $F$. Then:
\begin{enumerate}
 \item[$(i)$] $v(1)=v(-1) = 0$,
 \item[$(ii)$] $v(-x)=vx$ for all $x\in F$,
 \item[$(iii)$] $vx^{-1}=-vx$ for all $x\in F^\times$,
 \item[$(iv)$] if $vx\neq vy$, then 
 $vz = \min\{vx, vy\}$, for every $x,y\in F$ and $z \in x+y$.
\end{enumerate}
\end{corollary}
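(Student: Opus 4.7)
The plan is to leverage Lemma \ref{valhom}, which identifies a valuation $v$ on $F$ with a surjective homomorphism of hyperfields $v:F\to\mathcal{T}(vF)$, and read the four assertions off the homomorphism axioms (HH1)--(HH5) together with the explicit formula for $\boxplus$ on $\mathcal{T}(vF)$.

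For (i), property (HH4) applied to $v$ gives immediately $v(1)=0$. To obtain $v(-1)=0$, I would use (V2): $0 = v(1) = v((-1)(-1)) = v(-1)+v(-1)$. Since $vF$ is an ordered abelian group and therefore torsion free, the only element satisfying $2\gamma=0$ is $\gamma=0$, hence $v(-1)=0$. For (ii), apply (V2) to $-x = (-1)\cdot x$ and use (i) to get $v(-x)=v(-1)+vx = vx$; equivalently this is a direct consequence of (HH5) in the special case $x = -1$. For (iii), again from (V2) we have $0 = v(1) = v(x x^{-1}) = vx + v(x^{-1})$, so $v(x^{-1}) = -vx$; this is also precisely (HH5).

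The only point requiring a small computation is (iv). Assume $vx\neq vy$. Looking at Example \ref{TGamma} (and its extension to $\infty$), the sum in $\mathcal{T}(vF)$ satisfies $vx \boxplus vy = \{\min\{vx,vy\}\}$ whenever $vx\neq vy$, since then the two arguments are distinct in $vF\cup\{\infty\}$ (this covers also the case where one of $x,y$ is zero). By (HH3) applied to $z\in x+y$ we obtain
\[
vz \in v(x+y) \subseteq vx \boxplus vy = \{\min\{vx,vy\}\},
\]
so $vz = \min\{vx,vy\}$, as claimed.

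I do not expect any genuine obstacle: every item reduces to a one-line consequence of Lemma \ref{valhom} combined with the explicit description of $\mathcal{T}(vF)$, with the only subtlety being the torsion-freeness argument used in (i), which forces $v(-1)=0$.
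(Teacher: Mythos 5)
Your proof is correct and follows exactly the route the paper intends: the paper omits the proof, citing only that it is ``almost immediate to verify'' from Lemma~\ref{valhom}, which is precisely what you do by reading (i)--(iv) off the homomorphism axioms together with the explicit form of $\boxplus$ on $\mathcal{T}(vF)$. The only substantive step beyond pure bookkeeping is the torsion-freeness argument that forces $v(-1)=0$, and you identify and handle it correctly (though your aside that this is ``(HH5) for $x=-1$'' really belongs to the derivation of $v(-1)=0$ in (i), not to (ii), which you in fact prove via (V2) and item (i)).
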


\begin{lemma}\label{KTvalK}
Let $(K_T,w)$ be a valued hyperfield which is a factor hyperfield. Then there exists a valuation $v$ on $K$ such that $T\subseteq\mathcal{O}_v^\times$ and $w=v_T$.
\end{lemma}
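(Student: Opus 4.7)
The plan is to define $v$ as the composition of $w$ with the canonical projection $K \to K_T$, $x \mapsto xT$. More precisely, set $v(x) := w(xT)$ for every $x \in K$. This map is automatically well-defined (it is a composition of maps) and surjective onto $\Gamma \cup \{\infty\}$, because $w$ is surjective and $x \mapsto xT$ is surjective. We then verify the three valuation axioms and the two compatibility conditions.

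For (V1), note $v(x) = \infty$ iff $w(xT) = \infty$ iff $xT = 0T$ (by (V1) for $w$) iff $x = 0$. For (V2), use (V2) for $w$ together with the multiplicative definition in the factor construction:
\[
v(xy) = w(xyT) = w((xT)\cdot(yT)) = w(xT) + w(yT) = v(x) + v(y).
\]
For (V3), take $z = x + y$ in the field $K$. Since $1 \in T$, we can write $z = x + y\cdot 1$, which, by the definition of addition in $K_T$ recalled in Example \ref{Kraquot}, means
\[
zT \in xT + yT \quad \text{in } K_T.
\]
Applying (V3) for $w$ then gives $v(z) = w(zT) \ge \min\{w(xT), w(yT)\} = \min\{v(x), v(y)\}$.

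For the condition $T \subseteq \mathcal{O}_v^\times$, pick $t \in T$. Since $T$ is a subgroup of $K^\times$, we have $tT = T = 1T$, and therefore $v(t) = w(1T) = w(1_{K_T}) = 0$ by Corollary \ref{valringFVK}$(i)$. Finally, $w = v_T$ is immediate: by Lemma \ref{vT} and the definition of $v$, we have $v_T(xT) = v(x) = w(xT)$ for every $x \in K$.

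I expect no serious obstacle: the only place where a little care is needed is (V3), where one has to remember that the single-valued equality $z = x + y$ in $K$ corresponds, via the choice $t = 1 \in T$, to membership $zT \in xT + yT$ in the multivalued addition of $K_T$. Everything else is a mechanical transfer of the axioms from $w$ to $v$ through the canonical projection.
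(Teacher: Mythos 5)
Your proof is correct and implements the same core idea as the paper's: define $v$ as the composition of $w$ with the canonical projection $K \to K_T$. The paper phrases this more compactly by invoking Lemma~\ref{valhom} (a valuation is a surjective homomorphism of hyperfields onto a generalised tropical hyperfield) and noting that a composition of two surjective homomorphisms of hyperfields is again one; you instead unpack and verify the valuation axioms directly, which is equivalent and causes no gap.
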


\begin{proof}
Let $\Gamma$ be the value group of $(K_T,w)$. Since $K\to K_T$, $x\mapsto[x]_T$ and $w:K_T\to\mathcal{T}(\Gamma)$ are surjective homomorphisms of hyperfields, their composition $v:K\to\mathcal{T}(\Gamma)$ is a valuation on $K$ satisfying the conditions of the statement. 
\end{proof}

\begin{example}
By Proposition \ref{exquot} $(iv)$ we have that $\mathcal{T}(\Gamma)$ is isomorphic to all factor hyperfields of the form $k((t^\Gamma))_{\mathcal{O}_{v_t}^\times}$ for some field $k$. By the above lemmas, the isomorphism $\sigma:k((t^\Gamma))_{\mathcal{O}_{v_t}^\times}\to\mathcal{T}(\Gamma)$ is a valuation on $k((t^\Gamma))_{\mathcal{O}_{v_t}^\times}$ and the identity map $\Id=\sigma\circ\sigma^{-1}:\mathcal{T}(\Gamma)\to\mathcal{T}(\Gamma)$ is a valuation on $\mathcal{T}(\Gamma)$.
\end{example}

\subsection{Valuation hyperrings}

The next definition is inspired by classical valuation theory.
\begin{definition}[Definition 4.6 in \cite{KLS22}]
Let $F$ be a hyperfield. A relational subhyperring $\mathcal{O}$ of $F$ is called a \emph{valuation hyperring} in $F$ if for all $x\in F^\times$ we have that either $x\in \mathcal{O}$ or $x^{-1}\in\mathcal{O}$.
\end{definition}
Observe that, by definition, it follows that $1\in\mathcal{O}$ for any valuation hyperring $\mathcal{O}$ in $F$. Let us now prove some more basic properties of valuation hyperrings.
\begin{lemma}[Proposition 4.7 in \cite{KLS22}]\label{valstrict}
A valuation hyperring $\mathcal{O}$ in a hyperfield $F$ is a subhyperring of $F$.
\end{lemma}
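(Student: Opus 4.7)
My plan is to establish the two non-trivial subhyperring conditions for $\mathcal{O}$ --- closure under hypernegation and under hypersums --- using only that $\mathcal{O}$ is a hyperring under the induced operation and the dichotomy $x\in\mathcal{O}$ or $x^{-1}\in\mathcal{O}$ for $x\in F^\times$. The pivotal observation will be that the distributivity axiom \emph{inside} $\mathcal{O}$, applied with the inverse of a putative outsider as multiplier, forces that outsider to lie in $\mathcal{O}$.

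First I would dispose of $0\in\mathcal{O}$, which is automatic since $(\mathcal{O},+_{\mathcal{O}},\cdot,0)$ is a hyperring by hypothesis. For closure under hyperinverses, given $x\in\mathcal{O}$, axiom (CH3) applied inside $\mathcal{O}$ supplies $x'\in\mathcal{O}$ with $0\in x+_{\mathcal{O}}x'\subseteq x+x'$; by the uniqueness clause of (CH3) in $F$, $x'=-x$, so $-x\in\mathcal{O}$ for every $x\in\mathcal{O}$. Combined with multiplicative closure, it then suffices to prove that $x+y\subseteq\mathcal{O}$ for all $x,y\in\mathcal{O}$, because $x-y=x+(-y)\subseteq\mathcal{O}$ would follow.

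To this end, fix $x,y\in\mathcal{O}$ and $z\in x+y$. The case $z=0$ is immediate, so assume $z\neq 0$ and, for contradiction, $z\notin\mathcal{O}$; the valuation hyperring axiom then gives $z^{-1}\in\mathcal{O}$. Since $z^{-1},x,y\in\mathcal{O}$, distributivity (HR3) inside the hyperring $\mathcal{O}$ yields
\[
z^{-1}(x+_{\mathcal{O}} y)=(z^{-1}x)+_{\mathcal{O}}(z^{-1}y),
\]
which, after unpacking the induced operation and using distributivity in $F$, becomes the set identity
\[
z^{-1}\bigl((x+y)\cap\mathcal{O}\bigr)=\bigl(z^{-1}(x+y)\bigr)\cap\mathcal{O}.
\]
Now $1=z^{-1}z$ belongs to $z^{-1}(x+y)$ and to $\mathcal{O}$, so it lies in the right-hand side; hence it lies in the left-hand side, producing some $u\in(x+y)\cap\mathcal{O}$ with $z^{-1}u=1$, i.e.\ $u=z$. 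Thus $z\in\mathcal{O}$, contradicting the assumption.

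The hard part, I expect, is spotting \emph{what} to multiply by: the valuation hyperring hypothesis is precisely what certifies $z^{-1}$ as an admissible element of $\mathcal{O}$ on which the internal distributivity can be brought to bear, and distributivity is then the device that transports the trivial membership $1\in(z^{-1}(x+y))\cap\mathcal{O}$ into the sought-after membership $z\in\mathcal{O}$.
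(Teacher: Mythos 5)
Your proof is correct and rests on the same core idea as the paper's: given $z$ in a hypersum of elements of $\mathcal{O}$ but (for contradiction) outside $\mathcal{O}$, pass to $z^{-1}\in\mathcal{O}$ and exploit distributivity \emph{within} the relational subhyperring $(\mathcal{O},+_{\mathcal{O}},\cdot,0)$ to force $z$ back into $\mathcal{O}$. The only presentational difference is the bookkeeping: the paper argues directly with $x\in a-b$ and threads through (CH4) twice (once in $F$, once in $\mathcal{O}$) before applying (HR3), whereas you first secure $-x\in\mathcal{O}$ via (CH3) applied inside $\mathcal{O}$ and then need just one clean application of internal distributivity for plain sums, which is marginally leaner but not a genuinely different route.
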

\begin{proof}
It suffices to show that $a-b\subseteq \mathcal{O}$ for all $a,b\in\mathcal{O}$. Take $a,b\in\mathcal{O}$ and $x\in a-b$. If $x\in \mathcal{O}$, then there is nothing to show (note that this case also includes $x=0$). Otherwise, we have that $x^{-1}\in\mathcal{O}$ and thus $ax^{-1},bx^{-1}\in\mathcal{O}$. Since $x\in a-b$ we obtain from (CH4) that $a\in x+b$, so, using axiom (HR3),
\[
ax^{-1}\in(x+b)x^{-1}=1+bx^{-1}.
\]
We have obtained that $ax^{-1}\in (1+bx^{-1})\cap\mathcal{O}=1+_{\mathcal{O}}bx^{-1}$. By (CH4) and (HR3) applied to the hyperring $(\mathcal{O},+_\mathcal{O},\cdot,0)$, it follows that 
\[
xx^{-1}=1\in ax^{-1}+_\mathcal{O}(-bx^{-1})=(a+_\mathcal{O}(-b))x^{-1}.
\] 
Therefore, $x\in a+_\mathcal{O}(-b)\subseteq\mathcal{O}$. This shows that $a-b\subseteq\mathcal{O}$. 
\end{proof}

\begin{lemma}[Lemma 4.8 in \cite{KLS22}]\label{quotient}
Let $\mathcal{O}$ be a valuation hyperring in a hyperfield $F$. Then \mbox{$\mathcal{M}:=\mathcal{O}\setminus\mathcal{O}^\times$} is the unique maximal hyperideal of $\mathcal{O}$.
\end{lemma}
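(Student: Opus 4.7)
The plan is to proceed in two stages. First, I would verify that $\mathcal{M}$ is a hyperideal of $\mathcal{O}$. Second, I would deduce maximality and uniqueness from Lemma \ref{easy}, using that any proper hyperideal of $\mathcal{O}$ avoids $\mathcal{O}^\times$ and is therefore contained in $\mathcal{M}$.

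For the first stage, $0 \in \mathcal{M}$ is immediate, and the multiplicative axiom (HID1) is easy: if $x \in \mathcal{O}$ and $y \in \mathcal{M}$, then $xy \in \mathcal{O}$ by Lemma \ref{valstrict}, and if $xy$ were a unit of $\mathcal{O}$ with inverse $u$, the identity $y \cdot (xu) = 1$ would force $y \in \mathcal{O}^\times$, contradicting $y \in \mathcal{M}$. The delicate point is closure of $\mathcal{M}$ under the multivalued subtraction. Fix $a, b \in \mathcal{M}$, both nonzero (the cases with a zero argument being trivial). By the valuation hyperring property applied to $ab^{-1} \in F^\times$, we may assume after possibly swapping $a$ and $b$ that $ab^{-1} \in \mathcal{O}$. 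For any $c \in a - b$, distributivity (HR3) gives $cb^{-1} \in ab^{-1} - 1$, and the latter set is contained in $\mathcal{O}$ by Lemma \ref{valstrict} (since $ab^{-1}, 1 \in \mathcal{O}$). If $c$ were in $\mathcal{O}^\times$, then $b^{-1} = c^{-1}(cb^{-1})$ would lie in $\mathcal{O}$, contradicting $b \in \mathcal{M}$. Hence $c \in \mathcal{M}$, and $a - b \subseteq \mathcal{M}$; the product $ab$ is handled by the same argument as in (HID1).

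For the second stage, any hyperideal $J$ of $\mathcal{O}$ properly contained in $\mathcal{O}$ must avoid $\mathcal{O}^\times$ by Lemma \ref{easy}, hence $J \subseteq \mathcal{M}$. Taking $J = \mathcal{M}$ first shows $\mathcal{M} \subsetneq \mathcal{O}$ (which is clear anyway since $1 \in \mathcal{O}^\times$) and that no hyperideal of $\mathcal{O}$ strictly between $\mathcal{M}$ and $\mathcal{O}$ exists, so $\mathcal{M}$ is maximal; applying the same observation to any maximal hyperideal $J'$ gives $J' \subseteq \mathcal{M}$, whence $J' = \mathcal{M}$ by maximality of $J'$. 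The main obstacle is precisely the closure step for subtraction: one cannot simply pick a convenient element of the hyperset $a - b$, but must show every element lies in $\mathcal{M}$, and it is the asymmetric choice $ab^{-1} \in \mathcal{O}$ afforded by the valuation hyperring axiom that makes the uniform argument go through.
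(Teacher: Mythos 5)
Your proof is correct and follows essentially the same route as the paper: prove (HID1) by the inverse-chasing argument, reduce closure under subtraction to the case $ab^{-1}\in\mathcal{O}$ via the valuation hyperring dichotomy and distributivity, and conclude maximality and uniqueness from Lemma \ref{easy}. The only cosmetic difference is that where the paper cites (HID1) directly to get $b(1-ab^{-1})\subseteq\mathcal{M}$, you unwind that same step element-wise.
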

\begin{proof}
Take $a\in\mathcal{M}$ and $c\in \mathcal{O}$. If $ca$ is invertible in $\mathcal{O}$, then there exists $x\in\mathcal{O}$ such that $x(ca)=1$. Hence $(xc)a=1$ and $a^{-1}=xc\in\mathcal{O}$ contradicting $a\in\mathcal{M}$. This proves that $ca \in \mathcal{M}$ and shows that $\mathcal{M}$ satisfies (HID1).\par
Take $a,b\in\mathcal{M}$. We may assume that $ab^{-1}\in\mathcal{O}$ (otherwise $ba^{-1}\in\mathcal{O}$ and we can interchange the roles of $a$ and $b$). Since $\mathcal{O}$ is a subhyperring of $F$ (cf.\ Lemma \ref{valstrict}), we obtain that $1-ab^{-1}\subseteq \mathcal{O}$ and therefore, using what we have just proved, we conclude that 
\[
b-a =b(1-ab^{-1})\subseteq \mathcal{M}.
\] 
We have shown that $\mathcal{M}$ is a hyperideal of $\mathcal{O}$.\par Since, by the definition of $\mathcal{M}$, we have that $\mathcal{O}\setminus\mathcal{M}=\mathcal{O}^\times$, by Lemma \ref{easy}, every proper hyperideal of $\mathcal{O}$ must be contained in $\mathcal{M}$, showing that $\mathcal{M}$ is the unique maximal hyperideal of $\mathcal{O}$.
\end{proof}

\subsection{Residue hyperfield}

Any valuation on a hyperfield $F$ induces a valuation hyperring in $F$.

\begin{proposition}[Proposition 4.11 in \cite{KLS22}]\label{OvMv}
Let $v:F \to \Gamma \cup \lbrace \infty \rbrace$ be a valuation on a hyperfield $F$. Then 
\[
\mathcal O_v: = \lbrace x \in F \mid vx \geq 0 \rbrace
\]
is a valuation hyperring in $F$ and 
\[
\mathcal M_v := \lbrace x \in F \mid vx > 0 \rbrace
\]
is its unique maximal hyperideal.
\end{proposition}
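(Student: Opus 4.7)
The plan is to verify the two assertions in order: first that $\mathcal{O}_v$ is a valuation hyperring, and then to identify $\mathcal{M}_v$ as its unique maximal hyperideal by invoking Lemma \ref{quotient}. The key preliminary observation is that axiom (V3) makes $\mathcal{O}_v$ behave very well with respect to hyperaddition: if $x,y\in\mathcal{O}_v$, then every $z\in x+y$ satisfies $vz\geq\min\{vx,vy\}\geq 0$, so $x+y\subseteq\mathcal{O}_v$. Consequently, the induced multivalued operation $+_{\mathcal{O}_v}$ coincides with the restriction of $+$, and $\mathcal{O}_v$ will in fact turn out to be a (traditional) subhyperring of $F$, not merely a relational one.

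With this in hand, I would verify the hyperring axioms for $(\mathcal{O}_v,+|_{\mathcal{O}_v},\cdot,0)$. Multiplicative closure follows from (V2), and $0\in\mathcal{O}_v$ by (V1), giving (HR2). For (HR1), commutativity (CH2), associativity (CH1) and distributivity (HR3) are inherited from $F$; for (CH3), given $x\in\mathcal{O}_v$, the unique element $-x\in F$ with $0\in x+(-x)$ lies in $\mathcal{O}_v$ because $v(-x)=vx\geq 0$ by Corollary \ref{valringFVK}$(ii)$, and uniqueness inside $\mathcal{O}_v$ is inherited from uniqueness in $F$; axiom (CH4) transfers directly from $F$ using that $z-x\subseteq\mathcal{O}_v$ when $z,x\in\mathcal{O}_v$.

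To conclude that $\mathcal{O}_v$ is a valuation hyperring it remains to check that for every $x\in F^\times$ either $x\in\mathcal{O}_v$ or $x^{-1}\in\mathcal{O}_v$. If $x\notin\mathcal{O}_v$ then $vx<0$, so by Corollary \ref{valringFVK}$(iii)$ we have $v(x^{-1})=-vx>0\geq 0$, hence $x^{-1}\in\mathcal{O}_v$.

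For the second assertion, I would apply Lemma \ref{quotient}, according to which the set $\mathcal{O}_v\setminus\mathcal{O}_v^\times$ is the unique maximal hyperideal of $\mathcal{O}_v$. It thus suffices to identify this set with $\mathcal{M}_v$. An element $x\in\mathcal{O}_v$ is a unit of $\mathcal{O}_v$ if and only if also $x^{-1}\in\mathcal{O}_v$, that is, $vx\geq 0$ and $-vx\geq 0$, which is equivalent to $vx=0$. Therefore $\mathcal{O}_v\setminus\mathcal{O}_v^\times=\{x\in F\mid vx>0\}=\mathcal{M}_v$, completing the proof. The only mildly subtle point is the opening observation that (V3) upgrades $\mathcal{O}_v$ from a relational to a traditional subhyperring; once this is noted, everything else reduces to routine bookkeeping based on Corollary \ref{valringFVK} and Lemma \ref{quotient}.
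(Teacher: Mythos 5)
Your proposal is correct and follows essentially the same route as the paper: show $\mathcal{O}_v$ is closed under hyperdifference and multiplication (hence a traditional subhyperring), invoke Corollary \ref{valringFVK}$(iii)$ for the valuation-hyperring condition, and identify $\mathcal{M}_v$ with $\mathcal{O}_v\setminus\mathcal{O}_v^\times$ before applying Lemma \ref{quotient}. The only departure is stylistic: you explicitly rewalk the hypergroup axioms (CH1)--(CH4) for $\mathcal{O}_v$, whereas the paper simply records the two closure properties $a-b\subseteq\mathcal{O}_v$ and $ab\in\mathcal{O}_v$, which by the definition of traditional subhyperring already suffice.
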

\begin{proof}
We first prove that $\mathcal{O}_v$ is a subhyperring of $F$. Take $a,b\in \mathcal{O}_v$. By (V3), for all $c\in a-b$ we have $vc\geq\min\{va,v(-b)\}=\min\{va,vb\}\geq 0$, so $a-b\subseteq \mathcal{O}_v$. Further, we have $ab\in\mathcal{O}_v$ by (V2). By Corollary \ref{valringFVK} $(iii)$ we conclude that if $x\notin\mathcal{O}_v$, then $x^{-1}\in\mathcal{O}_v$ so $\mathcal{O}_v$ is a valuation hyperring in $F$.\par
Next we show that $\mathcal{M}_v$ is the unique maximal hyperideal of $\mathcal{O}_v$. Observe that, by virtue of Corollary \ref{valringFVK} $(iii)$, 
\[
\mathcal{O}_v^\times=\{x\in\mathcal{O}_v\mid vx=0\}.
\]
Hence, $\mathcal{M}_v=\mathcal{O}_v\setminus\mathcal{O}_v^\times$ and then $\mathcal{M}_v$ is the unique maximal hyperideal of $\mathcal{O}_v$ by Lemma \ref{quotient}.
\end{proof}

\begin{remark}\label{descrOv}
Let $(F,v)$ be a valued hyperfield. Note that $\mathcal{O}_v$ can be described in terms of the hyperoperation $\boxplus$ of $\mathcal{T}(vF)$ as the preimage in $F$ of $v(1)\boxplus v(1)$ under $v$:
\[
\mathcal{O}_v=v^{-1}(v(1)\boxplus v(1)).
\]
\end{remark}

It follows from Proposition \ref{maxid} that, for a valuation hyperring $\mathcal{O}$ with maximal hyperideal $\mathcal{M}$, the quotient hyperring 
\[
\mathcal{O}/\mathcal{M}=\{x+\mathcal{M}\mid x\in\mathcal{O}\},
\] 
with the multivalued operation $\oplus$ defined as
\[
(x+\mathcal{M})\oplus(y+\mathcal{M}):=\{z+\mathcal{M}\mid z\in x+y\},
\]
is a hyperfield (see \cite[Section 3]{Jun18} and Remark \ref{Jun-ker} above).

\begin{definition}
If $(F,v)$ is a valued hyperfield, then we call the hyperfield $\mathcal{O}_v/\mathcal{M}_v$ the \emph{residue hyperfield} of $(F,v)$ and we denote it by $Fv$. For an element $x\in\mathcal{O}_v$, we denote by $xv$ its natural image in $Fv$.
\end{definition}

\begin{proposition}\label{resquot}
Let $(K,v)$ be a valued field  and $T\subseteq\mathcal{O}_v^\times$ a subgroup of $K^\times$. Then $K_Tv_T\simeq (Kv)_{(Tv)}$, where $Tv:=\{tv\mid t\in T\}$.
\end{proposition}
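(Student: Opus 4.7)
The plan is to exhibit an explicit isomorphism $\phi\colon K_Tv_T \to (Kv)_{Tv}$. An element of $K_Tv_T$ has the form $xT + \mathcal{M}_{v_T}$ with $xT \in \mathcal{O}_{v_T}$; since $v_T(xT) = vx \geq 0$, we may always choose the representative $x \in \mathcal{O}_v$. I would define
\[
\phi(xT + \mathcal{M}_{v_T}) := (xv)\cdot (Tv).
\]
The first task is well-definedness, which splits into two parts. First, if $x$ is replaced by $xs$ with $s \in T \subseteq \mathcal{O}_v^\times$, then $(xs)v = (xv)(sv)$ with $sv \in Tv$, so the coset in $(Kv)_{Tv}$ is preserved. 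Second, if $xT \in yT + \mathcal{M}_{v_T}$ in $\mathcal{O}_{v_T}$, one can write $xT \in yT + zT$ with $vz > 0$, i.e.\ $x = (y + zt)s$ for some $t,s \in T$; applying $v$ and using $v(zt) = vz > 0$ gives $xv = yv\cdot sv$, so $\phi$ is insensitive to the $\mathcal{M}_{v_T}$-class.

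Next I would verify that $\phi$ is a homomorphism of hyperfields. Axioms (HH1), (HH2) and (HH4) are immediate from the definition, and (HH5) will follow from bijectivity. The content lies in (HH3): the hypersum $(xT + \mathcal{M}_{v_T}) \oplus (yT + \mathcal{M}_{v_T})$ in $K_Tv_T$ consists of the classes $(x+yt)T + \mathcal{M}_{v_T}$ for $t \in T$, and applying $\phi$ yields $((x+yt)v)(Tv) = \bigl(xv + (yv)(tv)\bigr)(Tv)$, which lies in $(xv)(Tv) + (yv)(Tv)$ in $(Kv)_{Tv}$ by the definition of the factor hyperoperation. Running this computation in reverse shows that the containment is in fact an equality, so $\phi$ already satisfies axiom (EM1) of an embedding.

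Surjectivity of $\phi$ is then straightforward: an element of $(Kv)_{Tv}$ is either $0$ (the image of $0T + \mathcal{M}_{v_T}$) or of the form $\bar{x}(Tv)$ for some $\bar{x} \in (Kv)^\times$, and such $\bar{x}$ lifts to some $x \in \mathcal{O}_v^\times$ whose class $xT + \mathcal{M}_{v_T}$ maps to $\bar{x}(Tv)$. Invoking Lemma \ref{isosuremb}, I conclude that $\phi$ is an isomorphism of hyperfields.

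The main obstacle is really bookkeeping rather than a deep difficulty: one must track two nested equivalence relations on each side — first modulo $T$ in $K$ and then modulo $\mathcal{M}_{v_T}$ in $K_T$, versus first modulo $\mathcal{M}_v$ in $\mathcal{O}_v$ and then modulo $Tv$ in $Kv$ — and confirm they describe the same fibres of $\phi$. The key algebraic fact that makes the two descriptions match is $T\cdot\mathcal{M}_v \subseteq \mathcal{M}_v$, which is automatic from the hypothesis $T \subseteq \mathcal{O}_v^\times$; it is what allows the $\mathcal{M}_{v_T}$-error in one description to become the $Tv$-scaling error in the other.
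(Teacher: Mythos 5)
Your proposal is essentially the same as the paper's proof: you define the very same map $[x]_T + \mathcal{M}_{v_T} \mapsto [xv]_{Tv}$, verify it term-by-term, and conclude via Lemma~\ref{isosuremb} exactly as the paper does, the only stylistic difference being that the paper invokes Jun's Lemma~3.3 for the characterisation of equality in the quotient hyperring, whereas you unwind this directly from the definitions of the factor hyperoperation. One point you should make explicit: Lemma~\ref{isosuremb} requires $\phi$ to be an \emph{embedding}, which by definition is an \emph{injective} homomorphism satisfying (EM1), and you never verify injectivity (the claim that (HH5) ``follows from bijectivity'' presupposes it). The argument is the reverse of your well-definedness step --- if $(xv)(Tv)=(yv)(Tv)$ then $xv=(yt)v$ for some $t\in T$, so $v(x-yt)>0$ and $[x]_T\in [yt]_T+[x-yt]_T\subseteq [y]_T+\mathcal{M}_{v_T}$ --- but it must be written out for the appeal to Lemma~\ref{isosuremb} to be legitimate.
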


\begin{proof}
By definition $v_T[x]_T=vx$ for all $x\in K$, thus $\mathcal{O}_{v_T}=(\mathcal{O}_{v})_T$ and $\mathcal{M}_{v_T}=(\mathcal{M}_{v})_T$. It follows that
\[
K_Tv_T=\mathcal{O}_{v_T}/\mathcal{M}_{v_T}=(\mathcal{O}_{v})_T/(\mathcal{M}_{v})_T.
\]
Let us define
\begin{align*}
\sigma: (\mathcal{O}_{v})_T/(\mathcal{M}_{v})_T&\to (Kv)_{(Tv)}\\
[x]_T+(\mathcal{M}_v)_T&\mapsto [xv]_{Tv}
\end{align*}
and show that $\sigma$ is an isomorphism of hyperfields. By \cite[Lemma 3.3]{Jun18} we have that $[x]_T+(\mathcal{M}_v)_T=[y]_T+(\mathcal{M}_v)_T$ if and only if $v(x-yt)>0$ for some $t\in T$ . This implies that 
\[
0=(x-yt)v=xv-yv\cdot tv\in[xv]_{Tv}-[yv]_{Tv}~,
\] 
where we have used the assumption $T\subseteq\mathcal{O}_v^\times$. This shows that $\sigma$ is well-defined and injective. The surjectivity of $\sigma$ is clear. Moreover, $\sigma$ is easily seen to be a homomorphism of the corresponding multiplicative groups. Finally, we observe that
\begin{align*}
\sigma\bigl([x]_T+(\mathcal{M}_v)_T\oplus([y]_T+(\mathcal{M}_v)_T)\bigr)&=\{\sigma\bigl([z]_T+(\mathcal{M}_v)_T\bigr)\mid [z]_T\in [x]_T+[y]_T\}\\
&=\{[zv]_{Tv}\mid z=x+yt\text{ for some }t\in T\}\\
&=\{[xv+yv\cdot tv]_{Tv}\mid t\in T\}\\
&=[xv]_{Tv}+[yv]_{Tv}
\end{align*}
hence $\sigma$ is an isomorphism of hyperfields by Lemma \ref{isosuremb}.
\end{proof}

\begin{example}
By Proposition \ref{exquot}, $\mathcal{T}(\Gamma)\simeq K_{\mathcal{O}_v^\times}$, where $K=k((t^\Gamma))$ for some field $k$ with more than two elements and $v$ is its canonical $t$-adic valuation. Since $Kv=k$ and $\mathcal{O}_v^\times v=k^\times$, by the above proposition and Proposition \ref{exquot} $(i)$ we have that the residue field of $\mathcal{T}(\Gamma)$ with respect to its valuation given by the identity map is isomorphic to $k_{k^\times}\simeq\K$ which, moreover, is a relational subhyperfield of $\mathcal{T}(\Gamma)$ (cf.\ Examples \ref{subvsstrictsub} and~\ref{iso}).
\end{example}

For a valued field $(K,v)$ we denote by $1+\mathcal{M}_v$ the set of $1$\emph{-units}. That is, those $x\in K$ such that $v(x-1)>0$ or equivalently, $xv=1v$.

\begin{proposition}
Let $(K,v)$ be a valued field and $T\subseteq\mathcal{O}_v^\times$ a subgroup of $K^\times$. Then the map
\begin{align*}
\iota:K_Tv_T&\to K_T\\
[xv]_{Tv}&\mapsto [x]_T
\end{align*}
is an embedding of hyperfields if and only if $1+\mathcal{M}_v\subseteq T$.
\end{proposition}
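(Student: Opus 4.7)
The plan is to prove each implication separately, disposing of the multiplicative axioms quickly and focusing on the subtle additive/embedding condition.

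For the "only if" direction, suppose $\iota$ is an embedding. Given any $m \in \mathcal{M}_v$, we have $v(1+m) = 0$, so $1 + m \in \mathcal{O}_v^\times$ and $(1+m)v = 1v$ in $Kv$. Hence $[(1+m)v]_{Tv} = [1v]_{Tv}$ in $(Kv)_{Tv}$, and applying $\iota$ (which is at least well-defined) forces $[1+m]_T = [1]_T$, i.e., $1 + m \in T$. Thus $1 + \mathcal{M}_v \subseteq T$.

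For the "if" direction, assume $1 + \mathcal{M}_v \subseteq T$. I would first check well-definedness of $\iota$ on non-zero classes: if $[xv]_{Tv} = [x'v]_{Tv}$ with $x, x' \in \mathcal{O}_v^\times$, then $(x/x')v = tv$ for some $t \in T$, so $x/x' \in t(1 + \mathcal{M}_v) \subseteq T \cdot T = T$ by hypothesis, hence $[x]_T = [x']_T$. Injectivity is immediate (if $x/x' \in T \subseteq \mathcal{O}_v^\times$, then $xv = (x't)v$), and (HH1), (HH2), (HH4), (HH5) follow straightforwardly. Condition (HH3) is absorbed into (EM1).

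The crux is (EM1). For $a = [xv]_{Tv}, b = [yv]_{Tv}$ with $x, y \in \mathcal{O}_v^\times$ (the cases where $a$ or $b$ equals $0$ are trivial), computing the hypersum in $K_Tv_T \cong (Kv)_{Tv}$ yields $a \oplus b = \{[(x+yt)v]_{Tv} \mid t \in T\}$, so
\[
\iota(a \oplus b) = \{[x+yt]_T \mid t \in T,\,v(x+yt) = 0\} \cup \{0 \mid \exists t \in T,\,v(x+yt) > 0\}.
\]
Since $\Ima \iota = \{0\} \cup \{[z]_T \mid z \in \mathcal{O}_v^\times\}$, the relevant intersection is
\[
(\iota(a) + \iota(b)) \cap \Ima \iota = \{[x+yt']_T \mid t' \in T,\,v(x+yt') = 0\} \cup \{0 \mid \exists t' \in T,\,x + yt' = 0\}.
\]

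The two sets agree trivially on the $[x+yt]_T$-type entries. The main obstacle is proving the equivalence of the conditions for $0$ to lie in each set. Assume $v(x+yt) > 0$ for some $t \in T$ and set $m := x + yt \in \mathcal{M}_v$; then $-x/y = t - m/y = t(1 - m/(ty))$, and since $m/(ty) \in \mathcal{M}_v$, the factor $1 - m/(ty)$ lies in $1 + \mathcal{M}_v \subseteq T$. Thus $t' := -x/y \in T$ and $x + yt' = 0$; the reverse implication is immediate. This step is precisely where the hypothesis $1 + \mathcal{M}_v \subseteq T$ is indispensable and closes the proof.
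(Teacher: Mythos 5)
Your proof is correct and follows the same overall structure as the paper's: ``only if'' via well-definedness applied to $1$-units, and ``if'' by checking the multiplicative axioms, well-definedness, injectivity, and then (EM1). The one place where you do noticeably more work is (EM1). The paper's verification there is essentially a one-line chain of equalities involving a set of representatives, and it glosses over the key point — namely, that if some combination $x+yt$ with $t\in T$ has positive value, then there actually exists $t'\in T$ with $x+yt'=0$, which is needed to see that the zero class on the left-hand side is matched on the right. Your factorisation $-x/y = t\bigl(1 - m/(ty)\bigr)$ with $m/(ty)\in\mathcal{M}_v$ makes this explicit and pinpoints exactly where the hypothesis $1+\mathcal{M}_v\subseteq T$ is consumed; this is a genuine improvement in clarity over the terse argument in the paper, even though the substance is the same.
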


\begin{proof}
If $1+\mathcal{M}_v\subseteq T$ and $x,y\in\mathcal{O}_v^\times$, then $[xv]_{Tv}=[yv]_{Tv}$ if and only if $xv=yv\cdot tv=(yt)v$ for some $t\in T$ if and only if $v(1-ytx^{-1})=v(x-yt)>0$ if and only if $ytx^{-1}\in 1+\mathcal{M}_v\subseteq T$. It follows that $[y]_T[x]_T^{-1}=[ytx^{-1}]_T=[1]_T$ and $\iota$ is well-defined. On the other hand if $[x]_T=[y]_T$ for some $x,y\in\mathcal{O}_v^\times$, then for some $t\in T$ we have that $x=yt$ and thus $xv=(yt)v=yv\cdot tv$. It follows that $\iota$ is injective. Moreover, we have that
\[
\iota\left([xv]_{Tv}\cdot[yv]^{-1}_{Tv}\right)=\iota\left([xv\cdot y^{-1}v]_{Tv}\right)=[xy^{-1}]_T=[x]_T[y]^{-1}_T
\]
and hence $\iota$ satisfies (HH2) and (HH5). It clearly satisfies (HH1) and (HH4). It remains to show that (EM1) holds. First observe that $\Ima\iota$ consists of classes $[x]_T$ where $x=0$ or $x\in R\subseteq\mathcal{O}_v^\times$, where $R$ is a set of some selected representatives for $Kv$.
Take again $x,y\in\mathcal{O}_v^\times$. We have that
\[
\iota\left([xv]_{Tv}+[yv]_{Tv}\right)=\{[x+yt]_T\mid x,y\in R,t\in T\cap R\}=\left([x]_T+[y]_T\right)\cap\Ima\iota.
\]
This completes the proof of one implication.\par
If $a\in 1+\mathcal{M}_v$ is not in $T$, then $av=1v$ and thus $[av]_{Tv}=[1v]_{Tv}$ holds. On the other hand, we have that
\[
\iota[av]=[a]_T\neq[1]_T=\iota[1v]_T
\] 
and thus $\iota$ is not well-defined and in particular not an embedding of hyperfields.  
\end{proof}

\subsection{Equivalence of valuations}

In analogy with classical valuation theory, our aim is now to show that valuation hyperrings can be used to describe valuations up to composition with an order preserving isomorphism of the value group.

\begin{proposition}[Proposition 4.12 in \cite{KLS22}]\label{FtimesOtimes}
Let $F$ be a hyperfield and $\mathcal{O}$ a valuation hyperring in $F$. Consider the multiplicative group $\Gamma:=F^\times/\mathcal{O}^\times$ and define a relation $\leq$ on $\Gamma$ as follows:
\[
a\mathcal{O}^\times\leq b\mathcal{O}^\times\iff ba^{-1}\in\mathcal{O}.
\]
Then $(\Gamma,\cdot,\leq)$ is an ordered abelian group and the canonical projection 
\[
\pi:F\to\Gamma\cup\{\infty\},
\] 
extended so that $\pi(0_F)=\infty$, is a valuation on $F$. Furthermore, $\mathcal{O}_\pi=\mathcal{O}$.
\end{proposition}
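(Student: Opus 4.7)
The plan is to verify three assertions in succession: (i) that $\leq$ makes $\Gamma$ into a linearly ordered abelian group, (ii) that $\pi$ satisfies the valuation axioms (V1)--(V3), and (iii) that $\mathcal{O}_\pi = \mathcal{O}$. For (i), I would first check well-definedness of $\leq$ on cosets: if $u, v \in \mathcal{O}^\times$, then $(bv)(au)^{-1} = (ba^{-1})(vu^{-1})$ lies in $\mathcal{O}$ iff $ba^{-1}$ does, since $vu^{-1} \in \mathcal{O}^\times$. Reflexivity is immediate from $1 \in \mathcal{O}$. Antisymmetry holds because if both $ba^{-1}$ and $ab^{-1}$ lie in $\mathcal{O}$, then $ba^{-1} \in \mathcal{O}^\times$, i.e., $a\mathcal{O}^\times = b\mathcal{O}^\times$. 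Transitivity follows from $ca^{-1} = (cb^{-1})(ba^{-1}) \in \mathcal{O}$. Totality is exactly the defining property of a valuation hyperring applied to $ba^{-1}$. Translation invariance reduces to the identity $(bc)(ac)^{-1} = ba^{-1}$, so $(\Gamma, \cdot, \leq)$ is an ordered abelian group; extending by $\infty$ as top element is routine.

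For (ii), surjectivity of $\pi$ is obvious, (V1) is built into the extension $\pi(0_F) = \infty$, and (V2) reduces to the identity $(xy)\mathcal{O}^\times = (x\mathcal{O}^\times)(y\mathcal{O}^\times)$ on $F^\times$, with the usual convention that products involving $\infty$ equal $\infty$. The heart of the argument is (V3). Given $z \in x + y$, I may assume $x, y, z$ are all non-zero (the degenerate cases are immediate from $\pi(0_F) = \infty$) and, by symmetry of the hyperoperation, that $\pi(y) \leq \pi(x)$, which by definition of $\leq$ means $xy^{-1} \in \mathcal{O}$. Applying (HR3) after multiplying through by $y^{-1}$,
\[
zy^{-1} \in (x+y)y^{-1} = xy^{-1} + 1.
\]
Both $xy^{-1}$ and $1$ lie in $\mathcal{O}$, and by Lemma \ref{valstrict} the valuation hyperring $\mathcal{O}$ is a subhyperring of $F$, so the hypersum $xy^{-1} + 1$ is contained in $\mathcal{O}$. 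Hence $zy^{-1} \in \mathcal{O}$, which by the definition of $\leq$ means exactly $\pi(z) \geq \pi(y) = \min\{\pi(x), \pi(y)\}$.

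Finally, (iii) is an unwinding of definitions: for $x \in F^\times$, the condition $\pi(x) \geq \pi(1_F)$ translates to $x \cdot 1^{-1} = x \in \mathcal{O}$, while $0_F$ lies in both $\mathcal{O}_\pi$ and $\mathcal{O}$ by convention. The main obstacle I anticipate is (V3), whose short derivation leans essentially on Lemma \ref{valstrict}: without the closure of $\mathcal{O}$ under the hyperoperation, the hypersum $xy^{-1} + 1$ could escape $\mathcal{O}$ and the argument would collapse. Everything else is careful bookkeeping around the definition of the quotient order.
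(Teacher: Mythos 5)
Your proof is correct and follows essentially the same route as the paper's: you verify the axioms of a linear order on the quotient group, check (V1)--(V3), and unwind the definition for $\mathcal{O}_\pi = \mathcal{O}$; in particular, your key step for (V3) — dividing by the element of smaller value and invoking Lemma~\ref{valstrict} to conclude $1 + xy^{-1} \subseteq \mathcal{O}$ — is exactly the paper's argument (with the roles of $x$ and $y$ swapped). The only small addition is your explicit check that $\leq$ is well-defined on cosets, which the paper leaves implicit.
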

\begin{proof}
First we show that $\leq$ is an ordering for $(\Gamma,\cdot)$. Since $aa^{-1}=1_F\in\mathcal{O}$, reflexivity is clear. If $ab^{-1},ba^{-1}\in\mathcal{O}$, then $ab^{-1}\in\mathcal{O}^\times$ so $a\mathcal{O}^\times=b\mathcal{O}^\times$. Hence $\leq$ is antisymmetric. If $ab^{-1},bc^{-1}\in\mathcal{O}$, then $ac^{-1}=ab^{-1}bc^{-1}\in\mathcal{O}$, showing that $\leq$ is transitive. Take now $a,b\in F^\times$ such that $a\mathcal{O}^\times\leq b\mathcal{O}^\times$ and $c\in F^\times$. We have that $bc(ac)^{-1}=bcc^{-1}a^{-1}=ba^{-1}\in\mathcal{O}$, whence $ac\mathcal{O}^\times\leq bc\mathcal{O}^\times$. This shows that $\leq$ is compatible with the operation of $\Gamma$. Finally, $\leq$ is a total order since $\mathcal{O}$ is a valuation hyperring, so that $ab^{-1}\in\mathcal{O}$ or $ba^{-1}\in \mathcal{O}$ for all $a,b\in F^\times$.\par
We now show that $\pi$ is a valuation on $F$. Clearly, $\pi$ is a surjective map, onto the ordered abelian group $\Gamma$ with $\infty$ and (V1) holds. Since $\pi$ is a homomorphism of groups we obtain (V2). It remains to show that (V3) holds for $\pi$. Take $x,y\in F$. If one of them is $0_F$, then (V3) is straightforward. We may then assume that $x,y\in F^\times$ and that $x\mathcal{O}^\times\leq y\mathcal{O}^\times$. Take $z\in x+y$. We wish to show that $zx^{-1}\in\mathcal{O}$. By assumption we have that $yx^{-1}\in\mathcal{O}$, thus
\[
zx^{-1}\in(x+y)x^{-1}=1+yx^{-1}\subseteq\mathcal{O},
\]
where we used Lemma \ref{valstrict}.\par
Finally, we observe that, by definition
\[
\mathcal{O}_\pi=\{x\in F\mid \pi x\geq 1\}=\{x\in F\mid x\mathcal{O}^\times\geq 1\mathcal{O}^\times\}=\{x\in F\mid x\in \mathcal{O}\}=\mathcal{O}.
\]
\end{proof}

\begin{definition}
Let $(\Gamma_i,<_i)$ be (partially) ordered sets for $i=1,2$. A map $\sigma:\Gamma_1\to\Gamma_2$ is said to be \emph{order preserving} if $\gamma_1\leq_1\gamma_2$ implies that $\sigma(\gamma_1)\leq_2\sigma(\gamma_2)$ for all $\gamma_1,\gamma_2\in\Gamma_1$.
\end{definition}

\begin{definition}
For $i=1,2$ let $v_i:F\to\Gamma_i\cup\{\infty\}$ be valuations on a hyperfield $F$. We say that $v_1$ and $v_2$ are \emph{equivalent} if there exists an isomorphism of groups $\sigma:\Gamma_1\to\Gamma_2$ which is order preserving and such that $v_2=\sigma\circ v_1$. 
\end{definition}

\begin{lemma}\label{Valuegroupasquotient}
Let $v:F\to\Gamma\cup\{\infty\}$ be a valuation on a hyperfield $F$. Then $\Gamma\simeq F^{\times}/\mathcal{O}_v^\times$ with an isomorphism of groups which is order preserving.
\end{lemma}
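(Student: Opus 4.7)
The plan is to apply the first isomorphism theorem for groups to the restriction $v|_{F^\times}: F^\times \to \Gamma$, and then check that the induced bijection is compatible with the ordering defined in Proposition \ref{FtimesOtimes}.

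First I would observe that $v|_{F^\times}$ takes values in $\Gamma$ (by (V1), since $x \neq 0$ forces $vx \neq \infty$), is a group homomorphism by (V2), and is surjective since $v$ is surjective onto $\Gamma \cup \{\infty\}$. Its kernel is
\[
\{x \in F^\times \mid vx = 0\} = \mathcal{O}_v^\times,
\]
where the equality uses the description of $\mathcal{O}_v^\times$ recorded in the proof of Proposition \ref{OvMv} (which in turn rests on Corollary \ref{valringFVK} $(iii)$). The first isomorphism theorem then yields a group isomorphism
\[
\bar v : F^\times/\mathcal{O}_v^\times \longrightarrow \Gamma, \qquad a\mathcal{O}_v^\times \longmapsto va.
\]

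It remains to verify that $\bar v$ is order preserving, where $F^\times/\mathcal{O}_v^\times$ carries the order from Proposition \ref{FtimesOtimes}. For $a,b \in F^\times$ I would simply compute
\[
a\mathcal{O}_v^\times \leq b\mathcal{O}_v^\times \iff ba^{-1} \in \mathcal{O}_v \iff v(ba^{-1}) \geq 0 \iff vb - va \geq 0 \iff va \leq vb,
\]
using Corollary \ref{valringFVK} $(iii)$ once more to rewrite $v(ba^{-1}) = vb - va$. This establishes that $\bar v$ is, in fact, an order isomorphism, which is stronger than what is required.

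There is no substantial obstacle here; the statement is essentially a packaging of the first isomorphism theorem together with the definition of the induced order on $F^\times/\mathcal{O}_v^\times$. The only mild care needed is in noting that $v|_{F^\times}$ genuinely lands in $\Gamma$ (so that one is working with a homomorphism of \emph{groups} rather than a map into $\Gamma \cup \{\infty\}$) and in identifying $\ker v|_{F^\times}$ with $\mathcal{O}_v^\times$, both of which are immediate from results already established.
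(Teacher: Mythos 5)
Your proof is correct and is essentially the paper's argument repackaged: the paper constructs the inverse map $\sigma:\Gamma\to F^\times/\mathcal{O}_v^\times$, $va\mapsto a\mathcal{O}_v^\times$, and verifies well-definedness, bijectivity, and the homomorphism property by hand rather than citing the first isomorphism theorem, but the underlying computations (kernel $=\mathcal{O}_v^\times$, order preservation via $ba^{-1}\in\mathcal{O}_v\iff vb\geq va$) are identical. Your biconditional at the end even makes explicit the observation that the map is an order isomorphism, which the paper leaves implicit.
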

\begin{proof}
We consider $F^{\times}/\mathcal{O}_v^\times$ as an ordered abelian group with the ordering defined in Proposition \ref{FtimesOtimes}. Using the surjectivity of $v$, we define a map
\[
\sigma:\Gamma\to F^{\times}/\mathcal{O}_v^\times
\]
by $\sigma(va)=a\mathcal{O}_v^\times$ for all $a\in F^\times$. This is well-defined since if $va=vb$, then $va-vb=v(ab^{-1})=0$ so that $ab^{-1}\in\mathcal{O}_v^\times$ and then $a\mathcal{O}_v^\times=b\mathcal{O}_v^\times$. Using property (V2) of valuations, we obtain that $\sigma$ is a homomorphism of groups. Further, if $va\leq vb$, then $ba^{-1}\in\mathcal{O}_v$ which means that $\sigma(va)\leq\sigma(vb)$. Thus, $\sigma$ is order preserving. It is clear that $\sigma$ is surjective. It therefore remains to show that $\sigma$ is injective. To this end, assume that $a\mathcal{O}_v^\times=b\mathcal{O}_v^\times$ for some $a,b\in F^\times$. Then there exists $c\in\mathcal{O}_v^\times$ such that $a=bc$. Since $vc=0$, by (V2) we obtain that $va=vb$. This completes the proof.
\end{proof}
\begin{remark}
Observe that by construction of $\sigma$ in the above proof, we have that $\sigma\circ v=\pi$ where $\pi$ is the canonical epimorphism $F^\times\to F^\times/\mathcal{O}_v^{\times}$.
\end{remark}

\begin{corollary}\label{eqvalhyp}
For $i=1,2$ let $v_i:F\to\Gamma_i\cup\{\infty\}$ be valuations on a hyperfield $F$. Then $v_1$ and $v_2$ are equivalent if and only if $\mathcal{O}_{v_1}=\mathcal{O}_{v_2}$.
\end{corollary}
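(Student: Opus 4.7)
The plan is to prove both implications, using Lemma \ref{Valuegroupasquotient} (and the remark immediately following it) as the key bridge between a valuation and the projection onto $F^\times/\mathcal{O}_v^\times$.

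For the forward direction, suppose $v_1$ and $v_2$ are equivalent via an order-preserving isomorphism $\sigma:\Gamma_1\to\Gamma_2$ with $v_2=\sigma\circ v_1$. Since $\sigma$ is a group isomorphism, $\sigma(0_{\Gamma_1})=0_{\Gamma_2}$, and since it is order-preserving (with order-preserving inverse, because it is an isomorphism of ordered groups), we have $v_1(x)\geq 0 \iff \sigma(v_1(x))\geq 0 \iff v_2(x)\geq 0$ for every $x\in F$. Extending the convention $\sigma(\infty)=\infty$ to handle $x=0$, this yields $\mathcal{O}_{v_1}=\mathcal{O}_{v_2}$ directly from the definition in Proposition \ref{OvMv}.

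For the converse, assume $\mathcal{O}_{v_1}=\mathcal{O}_{v_2}$, and denote this common valuation hyperring by $\mathcal{O}$. Then $\mathcal{O}^\times=\mathcal{O}_{v_1}^\times=\mathcal{O}_{v_2}^\times$, so the quotient ordered abelian group $F^\times/\mathcal{O}^\times$ (equipped with the ordering of Proposition \ref{FtimesOtimes}) does not depend on the choice of $v_i$. Applying Lemma \ref{Valuegroupasquotient} to each $v_i$ produces order-preserving group isomorphisms $\sigma_i:\Gamma_i\to F^\times/\mathcal{O}^\times$, and the remark following that lemma gives $\sigma_i\circ v_i=\pi$, where $\pi:F\to F^\times/\mathcal{O}^\times\cup\{\infty\}$ is the canonical projection (which is intrinsic to $\mathcal{O}$, not to $v_i$). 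Setting $\sigma:=\sigma_2^{-1}\circ\sigma_1:\Gamma_1\to\Gamma_2$, this is an isomorphism of ordered abelian groups, and the identity $\sigma_1\circ v_1=\pi=\sigma_2\circ v_2$ rearranges to $v_2=\sigma\circ v_1$, which is precisely the required equivalence.

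Neither direction presents a serious obstacle; the only thing to be slightly careful about is matching the $\infty$ conventions so that the equality $\mathcal{O}_{v_1}=\mathcal{O}_{v_2}$ is used in the forward implication at $x=0$ as well, and verifying that $\sigma_2^{-1}\circ\sigma_1$ inherits order-preservation from $\sigma_1$ and $\sigma_2^{-1}$ (which is immediate since both $\sigma_i$ are order isomorphisms).
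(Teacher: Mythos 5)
Your proof is correct. The converse direction (from $\mathcal{O}_{v_1}=\mathcal{O}_{v_2}$ to equivalence) is exactly the paper's argument: invoke Lemma~\ref{Valuegroupasquotient} for each $v_i$, observe $\pi_1=\pi_2$, and compose $\sigma_2^{-1}\circ\sigma_1$.

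For the forward direction you depart from the paper in a way that is in fact cleaner. The paper deduces $\mathcal{O}_{v_1}=\mathcal{O}_{v_2}$ by first arguing that $F^\times/\mathcal{O}_{v_1}^\times\simeq F^\times/\mathcal{O}_{v_2}^\times$ as ordered abelian groups (again via Lemma~\ref{Valuegroupasquotient} and the equivalence), and then reading off the valuation hyperring from the quotient ordering. You instead argue pointwise: since $v_2=\sigma\circ v_1$ with $\sigma$ an order isomorphism fixing $0$, one has $v_1x\geq 0\iff v_2x\geq 0$ for all $x$, whence the hyperrings agree by definition. This is more elementary and avoids a second round-trip through the quotient. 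One small point worth making explicit in a final write-up: the definition of equivalence only requires $\sigma$ itself to be order preserving, so you should remark (as you implicitly do) that an order-preserving group isomorphism between linearly ordered abelian groups automatically has an order-preserving inverse, which is immediate from totality of the order. With that one-line justification the argument is complete.
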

\begin{proof}
By the previous lemma we obtain for $i=1,2$ that $\Gamma_i\simeq F^{\times}/\mathcal{O}_{v_i}^\times$ as ordered abelian groups with isomorphisms $\sigma_i$ such that $\sigma_i\circ v_i=\pi_i$ where $\pi_i:F^\times\to F^\times/\mathcal{O}_{v_i}^\times$ is the canonical projection for $i=1,2$. Thus, if $\mathcal{O}_{v_1}=\mathcal{O}_{v_2}$, then $\pi_1=\pi_2$ and $\sigma:=\sigma_2^{-1}\circ\sigma_1$ is an isomorphism of ordered abelian groups $\Gamma_1\to\Gamma_2$. Further we have that
\[
\sigma\circ v_1=\sigma_2^{-1}\circ(\sigma_1\circ v_1)=\sigma_2^{-1}\circ\pi_2=v_2.
\]
Hence, $v_1$ and $v_2$ are equivalent.\par
On the other hand, if $v_1$ and $v_2$ are equivalent, then we obtain that $F^{\times}/\mathcal{O}_{v_1}^\times\simeq F^{\times}/\mathcal{O}_{v_2}^\times$ as ordered abelian groups. In particular, for $a\in F^\times$ we have that $1\mathcal{O}_{v_1}^\times\leq a\mathcal{O}_{v_1}^\times$ if and only if $1\mathcal{O}_{v_2}^\times\leq a\mathcal{O}_{v_2}^\times$. Using the definition of the ordering in $F^{\times}/\mathcal{O}_{v_i}^\times$ we see that this means that $a\in\mathcal{O}_{v_1}$ if and only if $a\in\mathcal{O}_{v_2}$. Since $0\in\mathcal{O}_{v_i}$ for $i=1,2$, we conclude that $\mathcal{O}_{v_1}=\mathcal{O}_{v_2}$ as claimed.
\end{proof}
In the following sections we will always consider valuations on hyperfields up to equivalence.

\section{Krasner valued hyperfields}\label{sec3}

In this section, we focus our attention on those valued hyperfields which satisfy the original more restrictive axioms of Krasner and were called by him \textit{hypercorps valu\'e}. These valued hyperfields still attract the attention of mathematicians. For instance, they have recently been considered by Tolliver and Lee (see \cite{Tol16,Lee20}) who called them simply \lq\lq valued hyperfields\rq\rq.

\subsection{Ultrametric spaces}\label{UMS}

We begin by presenting some basic theory of ultrametric spaces, a notion as well studied by Krasner (cf.\ \cite{Kra44}). The axioms for an ultrametric distance can be formulated using just the linear order of non-negative real numbers where $0$ is a bottom element. Since nothing but the order is used from the structure of real numbers, we will use the term ultrametric space in a broader sense allowing ultrametric distances to take values in any linearly ordered set. In addition, since the value group of a valuation has a top element, our definition of ultrametric space below is a modification which better fits into our context (the same approach can be found e.g.\ in \cite{Kuh11}). In the case of real-valued distances, this modification corresponds to a replacement of the linearly ordered set $(\R_{\geq 0},<)$ with $(\R\cup\{\infty\},>)$. 

\begin{definition}
An \emph{ultrametric distance} (or simply an \emph{ultrametric}) on a set $X$ is a function $d:X\times X\to\Gamma\cup\{\infty\}$, where $(\Gamma,<)$ is a linearly ordered set and $\infty$ satisfies $\gamma<\infty$ for all $\gamma\in\Gamma$, such that for all $x,y,z\in X$
\begin{itemize}
\item[(U1)] $d(x,y)=\infty$ if and only if $x=y$,
\item[(U2)] $d(x,y)=d(y,x)$,
\item[(U3)] $d(x,z)\geq\min\{d(x,y),d(y,z)\}$.
\end{itemize}
We call $(X,d)$ an \emph{ultrametric space} whenever $d$ is an ultrametric on $X$. We call the set $dX:=\{d(x,y)\mid x,y\in X, x\neq y\}\subseteq \Gamma$ the \emph{value set} of $d$. 
\end{definition}

\begin{example}
Let $(K,v)$ be a valued field. Then the function
\begin{align*}
K\times K&\to\Gamma\cup\{\infty\}\\
(x,y)&\mapsto v(x-y)
\end{align*} 
is an ultrametric on $K$. 
\end{example}

\begin{definition}
Let $(X,d)$ be an ultrametric space. A subset $B\subseteq X$ is called a \emph{ball} if for all $y,z\in B$ and all $x\in X$ we have that the following implication
\[
d(x,y)\geq d(y,z)\quad\Longrightarrow\quad x\in B
\]
holds for all $x,y,z\in X$.
\end{definition}

\begin{definition}
A subset $\rho$ of a linearly ordered set $(\Gamma,<)$ is called an \emph{initial segment} (resp.\ \emph{final segment}) if for all $\delta\in\rho$ and all $\gamma\in\Gamma$ if $\gamma<\delta$ (resp.\ $\gamma>\delta$), then $\gamma\in\rho$.
\end{definition}

\begin{lemma}\label{ultra}
For every element $x$ of an ultrametric space $(X,d)$ and every final segment $\rho$ of $dX\cup\{\infty\}$, we have that
\[
B_\rho(x):=\{y\in X\mid d(x,y)\in \rho\}.
\] 
is a ball in $X$. Conversely, if $B$ is a ball in $X$ and $\rho$ is the smallest (with respect to inclusion) final segment of $dX\cup\{\infty\}$ containing $d(y,z)$ for all $y,z\in B$, then for every $x\in B$,
\[
B=B_\rho(x).
\]
In particular, $B_\rho(x)=B_\rho(y)$ for every $y\in B_\rho(x)$.
\end{lemma}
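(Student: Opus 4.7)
The plan is to prove the three assertions of the lemma in turn, treating the ``in particular'' part as a direct corollary of (U3) rather than of the previous bullet.

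For the first claim, I take $y,z \in B_\rho(x)$ and $w \in X$ with $d(w,y) \geq d(y,z)$, and aim to show $d(x,w) \in \rho$. Applying (U3) gives $d(y,z) \geq \min\{d(x,y), d(x,z)\}$; since both $d(x,y)$ and $d(x,z)$ lie in $\rho$, their minimum does too, and the final-segment property of $\rho$ then forces $d(w,y) \in \rho$. A second application of (U3) yields $d(x,w) \geq \min\{d(x,y), d(y,w)\}$, another minimum of two elements of $\rho$, so $d(x,w) \in \rho$. The degenerate cases ($w = y$ or $y = z$) are trivial.

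For the second claim, I fix a ball $B$, a point $x \in B$, and $\rho$ as described. The inclusion $B \subseteq B_\rho(x)$ is immediate: for $y \in B$ we have $d(x,y) \in \rho$ by construction, while $d(x,x) = \infty$ lies in any nonempty final segment of $dX \cup \{\infty\}$. For the reverse inclusion I use that the smallest final segment of a linearly ordered set containing a subset $S$ is $\{\gamma : \gamma \geq s \text{ for some } s \in S\}$. Hence $w \in B_\rho(x)$ means $d(x,w) \geq d(a,b)$ for some $a,b \in B$. Combining this with the bound $d(a,b) \geq \min\{d(x,a), d(x,b)\}$ from (U3), I get $d(w,x) \geq d(x,c)$ for some $c \in \{a,b\} \subseteq B$. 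Since $x,c \in B$, the defining property of a ball (applied with ``$y$''$=x$, ``$z$''$=c$, ``$x$''$=w$) gives $w \in B$.

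The final assertion I would handle directly using (U3): if $y \in B_\rho(x)$ and $w \in B_\rho(y)$, then $d(x,w) \geq \min\{d(x,y), d(y,w)\}$ is a minimum of two elements of $\rho$, hence lies in $\rho$, giving $w \in B_\rho(x)$; the reverse inclusion follows by swapping the roles of $x$ and $y$. The only mildly subtle point in the whole argument is identifying the smallest final segment explicitly and remembering to keep $\infty$ inside $dX \cup \{\infty\}$, which is what guarantees $x \in B_\rho(x)$ and thus makes the second claim non-vacuous. Once that is in place, the whole lemma is a mechanical unwinding of (U2), (U3), and the definition of final segment.
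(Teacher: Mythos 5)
Your proof is correct and follows essentially the same approach as the paper: the first claim via two applications of (U3) and the final-segment property, and the converse by identifying the smallest final segment explicitly as $\{\gamma : \gamma \geq d(a,b) \text{ for some } a,b\in B\}$ and then invoking the ball property. The only cosmetic difference is that you prove the ``in particular'' part directly from (U2) and (U3) rather than deducing it from the converse statement, but the content is the same.
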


\begin{proof}
Assume that $y,z\in B_\rho(x)$, that is, $d(x,y)\in \rho$ and $d(x,z)\in \rho$. If $t\in X$ is such that $d(y,t)\geq d(y,z)$, then the inequalities
\[
d(x,t)\geq\min\{d(x,y),d(y,t)\}\geq\min\{d(x,y),d(y,z)\}\geq\min\{d(x,y),d(y,x),d(x,z)\}\in\rho
\] 
follow from (U2) and (U3). Since $\rho$ is a final segment of $dX\cup\{\infty\}$, we conclude that $d(x,t)\in \rho$. Hence, $t\in B_\rho(x)$and $B_\rho(x)$ is a ball.\par
For the converse, assume that $B$ is a ball and let $\rho$ be as in the assertion. Further, let $x$ be any element in $B$. If $y\in B$, then $d(x,y)\in \rho$ and thus $y\in B_\rho(x)$. On the other hand, if $y\in B_\rho(x)$, then $d(x,y)\in \rho$. So by definition of $\rho$, there is some $z\in B$ such that $d(x,z)\leq d(x,y)$. Since $B$ is a ball, it follows that $y\in B$. We have proved that $B=B_\rho(x)$. 
\end{proof}

\begin{corollary}\label{ultraballs}
Let $(X,d)$ be an ultrametric space. Every two balls with non-empty intersection are comparable by inclusion.
\end{corollary}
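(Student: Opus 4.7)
The plan is to reduce the statement to a comparability question about final segments of the value set, using the representation of balls provided by Lemma~\ref{ultra}. Concretely, let $B_1$ and $B_2$ be balls in $(X,d)$ with non-empty intersection, and fix some $x\in B_1\cap B_2$. Applying the second half of Lemma~\ref{ultra} to each of $B_1$ and $B_2$ (with $x$ playing the role of the base point) produces final segments $\rho_1,\rho_2$ of $dX\cup\{\infty\}$ with
\[
B_1=B_{\rho_1}(x)=\{y\in X\mid d(x,y)\in\rho_1\}\quad\text{and}\quad B_2=B_{\rho_2}(x)=\{y\in X\mid d(x,y)\in\rho_2\}.
\]
Thus both balls are described by membership conditions referring to the same point $x$, and comparing them reduces to comparing $\rho_1$ and $\rho_2$.

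The next step is the small combinatorial observation that any two final segments of a linearly ordered set are comparable by inclusion. Indeed, if neither $\rho_1\subseteq\rho_2$ nor $\rho_2\subseteq\rho_1$, one could pick $\gamma_1\in\rho_1\setminus\rho_2$ and $\gamma_2\in\rho_2\setminus\rho_1$; the linearity of the order on $dX\cup\{\infty\}$ forces, up to swapping, $\gamma_1<\gamma_2$, and then the final-segment property of $\rho_1$ applied to $\gamma_1$ and $\gamma_2$ would put $\gamma_2\in\rho_1$, a contradiction. Hence, after possibly swapping $B_1$ and $B_2$, we may assume $\rho_1\subseteq\rho_2$.

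To conclude, I would simply observe that for every $y\in B_1$ we have $d(x,y)\in\rho_1\subseteq\rho_2$, so $y\in B_2$; thus $B_1\subseteq B_2$ and the two balls are comparable. There is no real obstacle here: once Lemma~\ref{ultra} is invoked to recenter both balls at a common point, the proof is just the remark that final segments of a linearly ordered set form a chain under inclusion. The only mild subtlety is making sure the lemma is applied so that \emph{both} balls are expressed with the same base point, which is exactly what the final clause \textquotedblleft$B_\rho(x)=B_\rho(y)$ for every $y\in B_\rho(x)$\textquotedblright\ of Lemma~\ref{ultra} guarantees.
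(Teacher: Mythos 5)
Your proof is correct and follows essentially the same route as the paper: recenter both balls at a common point via Lemma~\ref{ultra}, observe that the corresponding final segments of $dX\cup\{\infty\}$ are comparable by inclusion, and translate that back to the balls. The only difference is that you spell out the short argument for why final segments form a chain, which the paper leaves implicit.
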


\begin{proof}
Take two balls $B$ and $B'$ and suppose that $z\in B\cap B'$. By Lemma \ref{ultra} there are final segments $\rho,\varsigma$ of $dX\cup\{\infty\}$ such that $B=B_\rho(z)$ and $B'=B_\varsigma(z)$. Since $\rho$ and $\varsigma$ are final segments, we must have $\rho\subseteq\varsigma$ or $\varsigma\subseteq\rho$. Hence, $B\subseteq B'$ or $B'\subseteq B$.
\end{proof}

\subsection{Krasner valuations}

Let $(\Gamma,<,+,0)$ be an ordered abelian group, $\rho$ be an initial segment of $\Gamma$ and $\gamma$ an element of $\Gamma$. We will sometimes write $\gamma>\rho$ to indicate that $\gamma\notin\rho$.\par 
The following class of valued hyperfields is of special interest.

\begin{definition}[Section 3 of \cite{Kra57}, Definition 1.4 in \cite{Tol16} and Definition 2.4 in \cite{Lee20}]\label{KVH}
We call a valued hyperfield $(F,v)$ a \emph{Krasner valued hyperfield} if
\begin{itemize}
\item[(KVH1)] For all $x,y\in F$, $v(x+y)$ is a singleton unless $0\in x+y$.
\item[(KVH2)] There exists an initial segment $\rho_v$ of $vF$ such that $0\in\rho_v$ and for all $x,y,z,t\in F$ we have that $z\in x+y$ implies that $t\in x+y$ if and only if $vs>\rho_v+\min\{vx,vy\}$ for all $s\in z-t$.
\end{itemize}
The initial segment $\rho_v$ is called the \emph{norm} of $v$. We will also say that $v$ is a \emph{Krasner valuation on $F$} when $(F,v)$ is a Krasner valued hyperfield. 
\end{definition}

\begin{example}\label{KrasK}
Let $(K,v)$ be a valued field and consider $K$ as a hyperfield. Then $v$ is a Krasner valuation on $K$ with norm $vK$.
\end{example}

\begin{proposition}\label{Kraultr}
Let $(F,v)$ be a Krasner valued hyperfield. For all $x,y\in F$ such that $x\neq y$, by axiom (KVH1), $v(x-y)$ contains a unique element $\gamma_{x,y}\in vF$. Define a map $d_v:F\times F\to\Gamma\cup\{\infty\}$ as
\[
d_v(x,y):=\begin{cases}\gamma_{x,y}&\text{if }x\neq y,\\ \infty&\text{otherwise.}\end{cases}
\]
Then $d_v$ is an ultrametric on $F$. Moreover, for all $x,y\in F$ and any $z\in x+y$, in the ultrametric space $(F,d_v)$ we have that
\[
x+y=B_{\rho_v+\min\{vx,vy\}}(z).
\]
\end{proposition}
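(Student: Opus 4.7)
The plan is to verify first that $d_v$ satisfies (U1)--(U3), and then identify the sets $x+y$ with the balls described.

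For the ultrametric axioms, (U1) is immediate from the definition. For (U2), assume $x\neq y$ and observe that $-(x-y)=y-x$; for any $s'\in x-y$ we have $-s'\in y-x$, and Corollary \ref{valringFVK} $(ii)$ gives $v(-s')=v(s')$. Since both $v(x-y)$ and $v(y-x)$ are singletons by (KVH1), we conclude $d_v(x,y)=d_v(y,x)$. For (U3), the non-trivial case is $x,y,z$ pairwise distinct. Using $0\in -y+y$ from (CH3), we obtain
\[
x\in x+0\subseteq x+(-y+y)=(x-y)+y,
\]
so some $s\in x-y$ satisfies $x\in s+y$; analogously, some $r\in y-z$ satisfies $y\in r+z$. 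Associativity (CH1) then yields $x\in (s+r)+z$, whence $x\in u+z$ for some $u\in s+r$, and (CH4) gives $u\in x-z$. By (V3), $vu\geq\min\{vs,vr\}$. Since $x\neq z$, by (KVH1) $v(x-z)=\{d_v(x,z)\}$ is a singleton, so $d_v(x,z)=vu\geq\min\{vs,vr\}=\min\{d_v(x,y),d_v(y,z)\}$. The degenerate cases $x=y$, $y=z$, or $x=z$ are immediate from the convention $d_v(\cdot,\cdot)=\infty$ on the diagonal.

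For the ball description, set
\[
\tau:=\bigl(vF\cup\{\infty\}\bigr)\setminus\bigl(\rho_v+\min\{vx,vy\}\bigr),
\]
which is a final segment of $vF\cup\{\infty\}$; since $d_vF=vF$ (witnessed by $d_v(a,0)=va$ using that $a+0=\{a\}$), it is also a final segment of $d_vF\cup\{\infty\}$, so $B_\tau(z)$ is a ball in the sense of Lemma \ref{ultra}. I then verify $x+y=B_\tau(z)$ by element-chasing: if $t=z$, then $d_v(z,t)=\infty\in\tau$ and $z\in x+y$ by hypothesis, so both sides contain $z$. If $t\neq z$, then $v(z-t)$ is a singleton by (KVH1), so the condition ``$vs>\rho_v+\min\{vx,vy\}$ for all $s\in z-t$'' is equivalent to $d_v(z,t)\in\tau$, i.e., to $t\in B_\tau(z)$. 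By (KVH2), this is in turn equivalent to $t\in x+y$. Combining the two cases gives the equality.

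The only subtle point is the case distinction in (U3) and the observation that $0\notin x-z$ when $x\neq z$ (via (CH3)), which ensures (KVH1) applies to promote the inequality $vu\geq\min\{vs,vr\}$ from a single representative $u$ to the singleton value $d_v(x,z)$. Everything else is a direct translation of the Krasner axioms into the language of ultrametrics.
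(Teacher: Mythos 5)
Your proof is correct and takes essentially the same route as the paper's (very terse) one: (U1) from (V1), (U2) from Corollary~\ref{valringFVK}~$(ii)$, (U3) from (V3) applied to $x-z\subseteq (x-y)+(y-z)$, and the ball identity as a direct reformulation of (KVH2). You have merely spelled out the details the paper leaves implicit, including the correct reading of $B_{\rho_v+\min\{vx,vy\}}(z)$ as the ball determined by the \emph{complementary} final segment of $d_vF\cup\{\infty\}$, which is indeed what makes the notation consistent with Lemma~\ref{ultra}.
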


\begin{proof}
Axiom (U1) follows from axiom (V1), axiom (U2) follows from Corollary \ref{valringFVK} $(ii)$ and axiom (U3) is a direct consequence of axiom (V3). The last statement is just a reformulation of axiom (KVH2). 
\end{proof}

We call the ultrametric $d_v$ on a Krasner valued hyperfield $(F,v)$ the \emph{ultrametric induced by }$v$ on $F$.

\begin{remark}\label{trivialKrasval}
Let $F$ be a hyperfield and let $v$ be the trivial valuation on $F$. If $v$ is a Krasner valuation of norm $\rho_v$, then by (KVH2) for all $x,y\in F^\times$ we have that $x\in 1-1$ if and only if $0=vx>\rho_v$. Since $0\in\rho_v$, it follows that $x-x=\{0\}$ for all $x\in F$ and then $F$ is a field by Lemma \ref{1-1=0}.\par
Conversely, if $K$ is a field, then the map $v(0):=\infty$ and $vx:=0$ for all $x\in K^\times$ is a Krasner valuation on $K$ with value group $vK=\{0\}$ and norm $vK$.
\end{remark}

Krasner's main motivation probably came from the following example.

\begin{example}\label{Kgamma}
For a valued field $(K,v)$ and an initial segment $\rho\subseteq vK$ containing $0$, let us consider the (multiplicative) group of \emph{$1$-units of level $\rho$}:
\[
1+\mathcal{M}^\rho_v=\{x\in K\mid v(x-1)>\rho\}\subseteq\mathcal{O}_v^\times.
\] 
Then $v_\rho:=v_{1+\mathcal{M}^\rho_v}$ is a Krasner valuation on $K_\rho:=K_{1+\mathcal{M}_v^\rho}$ and the norm of $v_\rho$ is $\rho$.
\end{example}

Actually, any Krasner valued hyperfield which is a factor hyperfield is of this form, as we show below.

\begin{proposition}
Let $F$ be a factor hyperfield admitting a Krasner valuation $w$. Then $F=K_\rho$ and $w$ is $v_\rho$ for some valued field $(K,v)$ and some initial segment $\rho$ of $vK=wF$ containing $0$.
\end{proposition}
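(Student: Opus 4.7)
Since $F$ is a factor hyperfield, write $F=K_T$ for a field $K$ and a subgroup $T$ of $K^\times$. Applying Lemma~\ref{KTvalK} to $(K_T,w)$, I obtain a valuation $v$ on $K$ with $T\subseteq\mathcal{O}_v^\times$ and $w=v_T$; by the construction in that lemma the value group of $v$ is exactly $wF$, so $vK=wF$. Setting $\rho:=\rho_w$, which is an initial segment of $wF=vK$ containing $0$ by (KVH2), the claim of the proposition reduces to establishing the equality
\[
T=1+\mathcal{M}_v^\rho=\{d\in K^\times\mid v(1-d)>\rho\}.
\]
Indeed, once this is proved, the definitions in Example~\ref{Kgamma} give $F=K_T=K_\rho$ and $w=v_T=v_\rho$.

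To identify $T$, I apply the norm axiom (KVH2) with the trivial hypersum $x=1T$, $y=0_F$. Then $\min\{w(1T),w(0_F)\}=0$ and $1T+0_F=\{1T\}$, so the only element in the hypersum is $z=1T$. Hence for every $d\in K^\times$,
\[
dT=1T\iff ws>\rho\text{ for all }s\in 1T-dT.
\]
Unpacking the factor construction, $1T-dT=\{(1-dr)T\mid r\in T\}$ and $w((1-dr)T)=v(1-dr)$, so the criterion becomes
\[
d\in T\iff v(1-dr)>\rho\text{ for every }r\in T. \tag{$\ast$}
\]

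The inclusion $T\subseteq 1+\mathcal{M}_v^\rho$ is now immediate from $(\ast)$ by choosing $r=1$. For the reverse inclusion, fix $d\in K^\times$ with $v(1-d)>\rho$. Because $0\in\rho$ we have $v(1-d)>0$, which forces $v(d)=v(1)=0$, i.e.\ $d\in\mathcal{O}_v^\times$. Given any $r\in T$, the already-proven direction applied to $r$ yields $v(1-r)>\rho$; writing
\[
1-dr=(1-d)+d(1-r)
\]
and using $v(d)=0$, both summands on the right have value strictly greater than $\rho$. The ultrametric inequality from (V3) gives $v(1-dr)\geq\min\{v(1-d),v(1-r)\}>\rho$ (using that $\rho$ is an initial segment). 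Therefore the right-hand side of $(\ast)$ holds for $d$, so $d\in T$, completing the equality $T=1+\mathcal{M}_v^\rho$.

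The core difficulty is the reverse inclusion: (KVH2) only tells us directly that elements of $T$ satisfy $v(1-d)>\rho$, and one must recover the full subgroup $1+\mathcal{M}_v^\rho$ by leveraging the \emph{quantifier over all} $r\in T$ in $(\ast)$. The trick is that once $T$ is known to contain all elements with $v(1-r)>\rho$ (provisionally), the ultrametric inequality feeds this information back into the criterion, closing the loop. Axiom (KVH1) plays no explicit role in the argument beyond ensuring that the norm $\rho_w$ in (KVH2) is well-defined and an initial segment of $vK$.
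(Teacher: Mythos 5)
Your proof is correct, and its opening step is the same as the paper's: apply Lemma~\ref{KTvalK} to obtain $v$ on $K$ with $w=v_T$, then use (KVH2) on the trivial hypersum $[1]_T+[0]_T=\{[1]_T\}$ to conclude $T\subseteq 1+\mathcal{M}_v$. The paper, however, stops there and simply asserts that ``the result follows,'' which leaves the identification of $T$ with a specific group of the form $1+\mathcal{M}_v^\rho$ unaddressed --- a genuine gap, since subgroups of $1+\mathcal{M}_v$ need not be of this form in general. You supply precisely the missing argument: extracting from (KVH2) the criterion $(\ast)$, getting $T\subseteq 1+\mathcal{M}_v^{\rho_w}$ by setting $r=1$, and then a clean bootstrap for the reverse inclusion via the ultrametric decomposition $1-dr=(1-d)+d(1-r)$, which feeds the already-proved inclusion back into $(\ast)$ to close the loop. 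Your version is the more complete of the two; the only minor point worth flagging is that the displayed criterion $(\ast)$ should be understood for $d\in K^\times$ (the case $d=0$ is excluded on both sides), which you implicitly assume but could state.
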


\begin{proof}
By Lemma \ref{KTvalK} there is a valued field $(K,v)$ and a subgroup $T\subseteq\mathcal{O}_w^\times$ of $K^\times$ such that $v_T=w$. Suppose that $t\in T$ is not a $1$-unit in $K$. Then $vt=0$ and $v(t-1)=0$ must hold. now, on the one hand, since $w=v_T$ is a Krasner valuation, for all $[x]_T\in [1]_T-[1]_T$ we have that $vx=v_T[x]_T>0$ by (KVH2). On the other hand, since $t\in T$ we have that $[t-1]_T\in[1]_T-[1]_T$. This contradiction proves that $T\subseteq 1+\mathcal{M}_v$ must hold and the result follows.
\end{proof}

\begin{remark}
We do not know if all Krasner valued hyperfields are factor hyperfields. Some results connected to this problem are provided in \cite{LT22}.
\end{remark}

\begin{example}
Consider the Hahn series field $K:=\F_2((t^\Gamma))$ for some non-trivial ordered abelian group $\Gamma$ and let $v$ denote its canonical $t$-adic valuation. In this case, since $Kv\simeq\F_2$ we have that $\mathcal{O}_v^\times=1+\mathcal{M}_v$. We conclude that
\[
\mathcal{T}'(\Gamma)\simeq K_\rho,
\]
where $\rho=\{\gamma\in\Gamma\mid\gamma\leq 0\}$. We have that the identity map $v_\rho$ is the identity map on $\mathcal{T}(\Gamma)=\mathcal{T}'(\Gamma)$ and it is a Krasner valuation on $\mathcal{T}'(\Gamma)$ with norm $\rho$. Note that, the same map is \emph{not} a Krasner valuation on $\mathcal{T}(\Gamma)$ as $0\in [0,\infty]=0\boxplus 0$ violates (KVH2). 
\end{example}

We now study the residue hyperfield of a Krasner valued hyperfield.

\begin{proposition}\label{reshfld}
The residue hyperfield $Fv$ of a Krasner valued hyperfield $(F,v)$ is a field.
\end{proposition}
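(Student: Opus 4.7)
The plan is to apply Lemma \ref{1-1=0} to $Fv$. Since $Fv$ is already known to be a hyperfield, it will suffice to verify that $1v \oplus (-(1v)) = \{0v\}$, i.e., that $1v \ominus 1v$ is a singleton. Unpacking the definition of the quotient hyperoperation $\oplus$, this amounts to showing that every element $t \in 1 + (-1) \subseteq F$ satisfies $tv = 0v$, equivalently $t \in \mathcal{M}_v$, equivalently $vt > 0$.

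The key input is axiom (KVH2). By (CH3) we have $0 \in 1 + (-1)$, so we may use $z = 0$ as the "reference" element of this hypersum. Pick any $t \in 1 + (-1)$. Applying (KVH2) with $x = 1$, $y = -1$, $z = 0$ and this $t$, and using that $0 - t = \{-t\}$ (since $0$ is the neutral element of $+$ by the lemma in Section~\ref{sec1}) together with $v(-t) = vt$ from Corollary \ref{valringFVK}$(ii)$, we get
\[
vt \;>\; \rho_v + \min\{v(1), v(-1)\} \;=\; \rho_v + 0 \;=\; \rho_v.
\]
Since $0 \in \rho_v$ and $\rho_v$ is an initial segment of $vF$, the inequality $vt > \rho_v$ implies $vt > 0$ (this also covers the case $t = 0$, where $vt = \infty > 0$ holds trivially). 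Therefore $t \in \mathcal{M}_v$, so $tv = 0v$ in $Fv$.

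This proves $1v \ominus 1v \subseteq \{0v\}$, and the reverse inclusion holds because $0 \in 1 + (-1)$. So $1v \ominus 1v = \{0v\}$, and Lemma \ref{1-1=0} yields the conclusion that $Fv$ is a field. There is no real obstacle here; the only point requiring attention is the translation of "$z \in x+y$ implies $t \in x+y$ iff \dots" into a direct statement by choosing $z = 0$ and recognising that the resulting singleton $0 - t$ reduces the quantifier over $s \in z-t$ to the single value $-t$.
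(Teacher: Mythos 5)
Your proposal is correct and follows essentially the same route as the paper: apply (KVH2) with $z=0$ to show that every $t\in 1-1$ has $vt>\rho_v$, hence $vt>0$ and $tv=0v$, and then invoke Lemma~\ref{1-1=0}. You spell out the application of (KVH2) in slightly more detail than the paper's one-line reference, but the argument is the same.
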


\begin{proof}
Take $xv\in 1v-1v$. This means that $x\in 1-1$ and by axiom (KVH2), we deduce that $vx>\rho_v$. Since $0\in\rho_v$, it follows that $xv=0v$ and therefore $Fv$ is a field by Lemma \ref{1-1=0}.
\end{proof}

\begin{example}
Let $(K,v)$ be a valued field. It follows from Proposition \ref{resquot} that, for any initial segment $\rho$ of $vK$ containing $0$, the residue hyperfield of $K_\rho$ is a field isomorphic to $Kv$.
\end{example}

Let us now consider a valued hyperfield which is not a Krasner valued hyperfield.

\begin{example}\label{noKraVal}
Consider the rational function field $K:=\Q(X)$ over the rationals, and its multiplicative subgroup $T:=\Q^\times$. Under the canonical $X$-adic valuation $v$, we have that $T\subseteq\mathcal{O}_{v}^\times$. Hence, Lemma \ref{vT} yields a valued hyperfield $(K_T,v_T)$. By Proposition \ref{resquot} and Proposition \ref{exquot} $(i)$, we have that 
\[
K_Tv_T\simeq\Q_{\Q^\times}\simeq\K.
\]
Since $\K$ is not a field, the Proposition \ref{reshfld} implies that $(K_T,v_T)$ is not a Krasner valued hyperfield.
\end{example}

\subsection{Mittas and superiorly canonical hypergroups}

J.\ Mittas was a student of Krasner. He introduced superiorly canonical hypergroups (defined below) and published (sometimes without proofs) some results which connect them to Krasner valuations (see e.g.\ \cite{Mit71}). Some of the contents of this subsection were inspired by his work.

\begin{definition}[\cite{Mit71} and page 81 of \cite{Mas21}]\label{supcanhypg}
A canonical hypergroup $H$ is called \emph{superiorly canonical} if
\begin{itemize}
\item[(SCH1)] For all $x,y\in H$, if $x\in x+y$, then $x+y=\{x\}$.
\item[(SCH2)] For all $x,y,z,t\in H$, if $(x+y)\cap(z+t)\neq\emptyset$, then $x+y\subseteq z+t$ or $z+t\subseteq x+y$. 
\item[(SCH3)] For all $x,y\in H$ such that $x\neq y$ we have that $z,t\in x-y$ implies $z-z=t-t$.
\item[(SCH4)] For all $x,y,z\in H$, if $x\in z-z$ and $y\notin z-z$, then $x-x\subseteq y-y$.
\end{itemize}
\end{definition}

\begin{example}
Any abelian group is a superiorly canonical hypergroup.
\end{example}

Other examples of superiorly canonical hypergroups are provided by the following result.

\begin{proposition}\label{Mittas1}
Let $(F,v)$ be a Krasner valued hyperfield. Then the additive canonical hypergroup of $F$ is superiorly canonical.
\end{proposition}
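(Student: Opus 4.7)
The plan is to exploit the ultrametric structure $(F,d_v)$ established in Proposition \ref{Kraultr}, which realises each sum $x+y$ as a ball $B_{\rho_v+\min\{vx,vy\}}(z)$ around any of its elements $z$. The central auxiliary fact I will use is a \emph{rigidity principle}: applying (KVH2) to the trivial sum $x+0=\{x\}$ yields that, for any $t\in F$, if $vs>\rho_v+vx$ for every $s\in x-t$, then $t=x$. This will be the engine for (SCH1); the remaining axioms will follow from standard ultrametric ball manipulations.

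For (SCH1), suppose $x\in x+y$. By (CH4) we have $y\in x-x$, and applying (KVH2) to the sum $x-x$ (which contains $0$) gives $vy>\rho_v+vx$; in particular $\min\{vx,vy\}=vx$. For any $t\in x+y$, (KVH2) applied to $x+y$ with $z=x$ produces $vs>\rho_v+vx$ for every $s\in x-t$, and the rigidity principle then forces $t=x$. Hence $x+y=\{x\}$. Axiom (SCH2) is immediate: both $x+y$ and $z+t$ are balls in $(F,d_v)$ by Proposition \ref{Kraultr}, and Corollary \ref{ultraballs} provides the required comparability.

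For (SCH3), since $x\neq y$ ensures $0\notin x-y$, axiom (KVH1) makes $v(x-y)$ a singleton, so all $z,t\in x-y$ share a common value $vz=vt$. Applying Proposition \ref{Kraultr} at the point $0$ of both $z-z$ and $t-t$ yields $z-z=B_{\rho_v+vz}(0)=B_{\rho_v+vt}(0)=t-t$. For (SCH4), I first note that the case $z=0$ is trivial, as it forces $x=0$ and hence $x-x=\{0\}\subseteq y-y$. Assuming $z\neq 0$, the hypothesis $x\in z-z=B_{\rho_v+vz}(0)$ translates to $vx>\rho_v+vz$, while $y\notin z-z$ (combined with $0\in z-z$, which forces $y\neq 0$) gives $vy\leq\gamma_0+vz$ for some $\gamma_0\in\rho_v$, so $vy-vz\in\rho_v$ by initiality; in particular $vy<vx$. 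A short check using initiality of $\rho_v$ then shows $\rho_v+vy\subseteq\rho_v+vx$, whence $x-x=B_{\rho_v+vx}(0)\subseteq B_{\rho_v+vy}(0)=y-y$.

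The main conceptual hurdle I anticipate is isolating the rigidity principle used in (SCH1), and in particular noticing that the radius $\rho_v+vx$ simultaneously governs the sum $x+y$ (whenever $x\in x+y$) and pins down $t=x$ via (KVH2) applied to $x+0$; once this is in place, each of (SCH1)--(SCH4) reduces to a routine manipulation of the ball formula from Proposition \ref{Kraultr} together with the ultrametric comparability of Corollary \ref{ultraballs}.
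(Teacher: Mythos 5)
Your proof is correct and follows the same overall plan as the paper's — verify (SCH1)--(SCH4) one at a time, using the ultrametric ball picture of Proposition~\ref{Kraultr} and Corollary~\ref{ultraballs} — but a couple of the individual steps are genuinely different and worth comparing. For (SCH3), you observe that $0\notin x-y$ lets you invoke (KVH1) directly to conclude that $v(x-y)$ is a singleton, so $vz=vt$ is immediate; the paper instead derives $vz=vt$ by contradiction, assuming $vz<vt$, feeding $d_v(z,t)=vz$ into (KVH2) and contradicting $0\notin x-y$. Your route is visibly shorter and uses the axiom designed for exactly this purpose. For (SCH1), you extract a reusable \emph{rigidity principle} by specialising (KVH2) to the degenerate sum $x+0=\{x\}$ (so that $\min\{vx,v0\}=vx$), and then reduce $x+y=\{x\}$ to two applications of (KVH2): one to $x+y$ (giving $vs>\rho_v+vx$ for all $s\in x-t$) and one to $x+0$ (giving $t=x$). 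The paper instead establishes $vz=vx$ via Corollary~\ref{valringFVK}~$(iv)$ and applies (KVH2) to the sum $z-x$ to conclude $0\in z-x$; the two arguments are close in spirit, but your formulation cleanly isolates the lemma that large-valued differences force equality, which is slightly more transparent. For (SCH4) you are also slightly more careful than the paper in splitting off $z=0$ (so that $\rho_v+vz$ is meaningful) and in spelling out why $vy<vx$ implies $\rho_v+vy\subseteq\rho_v+vx$; the paper leaves these as implicit. The only tiny thing you should add is that in (SCH1) the case $x=0$ (where the rigidity principle's hypothesis $x\neq 0$ fails, since otherwise $\min\{v0,v0\}=\infty$ is ill-behaved) is itself trivial: $0\in 0+y$ forces $y=0$ and $0+0=\{0\}$. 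With that remark included, the argument is complete.
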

\begin{proof}
We verify the axioms one by one. Assume that $x\in x+y$ for some $x,y\in F$. By reversibility, $y\in x-x$, so the inequalities $vy>\rho_v+vx\geq vx$ follow from axiom (KVH2) and $0\in\rho_v$. Therefore, if $z\in x+y$, then $vx=vz$ by Corollary \ref{valringFVK} $(iv)$ and since by reversibility $y\in z-x$ and $d_v(y,0)=vy>\rho_v+\min\{vx,vz\}$, axiom (KVH2) implies that $0\in z-x$ and so $x=z$ must hold. This shows (SCH1).\par
Axiom (SCH2) follows from Proposition \ref{Kraultr} and Corollary \ref{ultraballs}.\par
For (SCH3), take $x,y\in F$ and assume that $x\neq y$, i.e., $0\notin x-y$. Take $z,t\in x-y$. We claim that $vz=vt$. Suppose that $vz<vt$, then by Corollary \ref{valringFVK} $(iv)$ we have that $va=vz$ for all $a\in z-t$, thus
\[
d_v(z,0)=vz=d_v(z,t)>\rho_v+\min\{vx,vy\}.
\]
Axiom (KVH2) now implies that $0\in x-y$, a contradiction. Therefore, $vz\geq vt$. Symmetrically, $vt\geq vz$ and so $vz=vt$ as claimed. Now, by (KVH2) we have that $a\in z-z$ if and only if $va>\rho_v+vz$ and $a\in t-t$ if and only if $va>\rho_v+vt$. Since $vz=vt$ we conclude that $z-z=t-t$. This shows that (SCH3) holds.\par
For (SCH4), take $x\in z-z$ and $y\notin z-z$. By axiom (KVH2) it follows that $vx>\rho_v+vz$ and $vy\in\rho_v+vz$. In particular, $vx>vy$. Now, again by axiom (KVH2), if $a\in x-x$, then $va>\rho_v+vx$ which implies $va>\rho_v+vy$ and so $a\in y-y$. Hence, $x-x\subseteq y-y$. This completes the proof.
\end{proof}

The theory of superiorly canonical hypergroups and the theory of Krasner valued hyperfields are even more deeply related.

\begin{proposition}\label{Mittas2}
Let $F$ be a hyperfield with a superiorly canonical additive hypergroup. Then
\[
\mathcal{O}:=\{x\in F\mid x-x\subseteq 1-1\}
\]
is a valuation hyperring in $F$. Moreover, if $v$ is the valuation such that $\mathcal{O}_v=\mathcal{O}$, then $(F,v)$ is a Krasner valued hyperfield.
\end{proposition}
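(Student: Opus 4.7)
The plan is to show first that $\mathcal{O}$ is a valuation hyperring (so that Proposition \ref{FtimesOtimes} produces the valuation $v$ with $\mathcal{O}_v=\mathcal{O}$) and then to verify that $v$ satisfies the Krasner axioms. A crucial auxiliary object throughout is the set $I:=1-1$, which I expect to be a hyperideal of $\mathcal{O}$ and ultimately to coincide with $\{a\in F: va\notin\rho_v\}$ for a suitable initial segment $\rho_v$.

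The first step is to show that $I$ is closed under $+$. For $a\in 1-1$, axiom (CH4) gives $1\in 1+a$, so (SCH1) yields $1+a=\{1\}$; associativity then forces $1+(a+b)=\{1\}$ for $a,b\in I$, hence $a+b\subseteq 1-1=I$ by (CH4). Using the distributive identity $x-x=x(1-1)$, the closure of $\mathcal{O}$ under multiplication and under hypersum reduces to routine computations exploiting $I+I\subseteq I$. The valuation-hyperring property then comes from (SCH2): the sets $x-x$ and $1-1$ both contain $0$, so they are comparable by inclusion, and in the case $1-1\subseteq x-x$ one has $x^{-1}-x^{-1}=x^{-1}(1-1)\subseteq x^{-1}(x-x)=1-1$.

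With $v$ in hand I would set $\rho_v:=vF\setminus v(I\setminus\{0\})$. Verifying that $\rho_v$ is an initial segment of $vF$ containing $0$ reduces to showing that $I$ is a hyperideal of $\mathcal{O}$ (the inclusion $\mathcal{O}\cdot I\subseteq I$ follows by multiplying $1\in 1+a$ by $x\in\mathcal{O}$ and invoking (SCH1) and (CH4)) and that $1\notin I$ (from (SCH1) and (CH3)); this also yields the characterisation $I=\{a\in F: va\notin\rho_v\}$. The key geometric claim for (KVH2) is then: if $z\in x+y$ and $vx\leq vy$, then $x+y=z+(x-x)$. One inclusion uses $y-y\subseteq x-x$ (because $y/x\in\mathcal{O}$) together with $I+I\subseteq I$; the other exploits the fact that for $s\in x-x$ one has $x\in x+s$ by (CH4), so (SCH1) gives $x+s=\{x\}$ and hence $(x+y)+s=x+y$.

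The most delicate step will be (KVH1). When $vx\neq vy$, Corollary \ref{valringFVK}$(iv)$ already forces $v(x+y)$ to be a singleton, so the real content lies in the case $vx=vy$. After rescaling to $x=1$ and $y=c\in\mathcal{O}^\times$ with $c\neq -1$, I would argue by contradiction: if $z,t\in 1+c$ satisfied $vz<vt$, then Corollary \ref{valringFVK}$(iv)$ would give $vi=vz$ for every $i\in t-z$, while $t-z\subseteq(1+c)-(1+c)\subseteq I+I\subseteq I$ would force $vi>\rho_v$. Hence $vz>\rho_v$, so $z\in I$; then $-z\in I$ and $0\in z-z\subseteq z+I=1+c$ (the last equality being the instance of the (KVH2) claim established above), contradicting $c\neq -1$.
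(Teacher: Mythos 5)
Your proposal is correct, and it takes a genuinely different and noticeably leaner route than the paper's proof. The paper establishes that $v(1-1)$ is a final segment of $vF$ by invoking (SCH4), proves (KVH1) directly from (SCH3), and verifies (KVH2) through a fairly involved case analysis that repeatedly calls on (SCH2). You instead organise everything around the hyperideal $I:=1-1$: the inclusion $I+I\subseteq I$ (from (SCH1) and (CH4)), the inclusion $\mathcal{O}\cdot I\subseteq I$ (again from (SCH1) and (CH4), which immediately makes $v(I\setminus\{0\})$ a final segment without any appeal to (SCH4)), and the single ``ball'' identity $x+y=z+(x-x)$ for $z\in x+y$ with $vx\leq vy$. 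That identity delivers (KVH2) in one stroke, and your contradiction argument for (KVH1) — pushing $t-z$ into $I$, concluding $z\in I$, and then using $z+I=1+c$ to force $0\in 1+c$ — replaces the paper's direct use of (SCH3). Net effect: your proof appears to use only (SCH1) and (SCH2) (together with the canonical hypergroup and hyperring axioms), which, if you verify it carefully, would show that (SCH3) and (SCH4) are redundant for the additive hypergroups of hyperfields and would sharpen Theorem~\ref{maintw}. The paper's version, by contrast, uses all four axioms symmetrically with the converse direction (Proposition~\ref{Mittas1}). A couple of points to spell out when writing this up in full: you should explicitly note that $I\subseteq\mathcal{O}$ (immediate from $I+I\subseteq I$) so that ``$I$ is a hyperideal of $\mathcal{O}$'' is well-posed; you should justify that $0\notin v(I\setminus\{0\})$ by the argument that $vx=0$ and $x\in I$ would give $1=x^{-1}x\in\mathcal{O}\cdot I\subseteq I$, contradicting $1\notin I$; and you should handle the boundary cases $x=0$ or $y=0$ in the ball identity (they are easy but not literally covered by the stated $vx\leq vy$ normalisation).
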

\begin{proof}
If $F$ is a field, then $\mathcal{O}=F$ and thus $v$ is the trivial valuation on $F$. Since $F$ is a field, $(F,v)$ is a Krasner valued hyperfield. We can then assume that $F$ is not a field.\par
If $x-x\subseteq 1-1$ and $y-y\subseteq 1-1$, then
\[
xy-xy=(x-x)y\subseteq (1-1)y=y-y\subseteq 1-1.
\]
Hence, $\mathcal{O}$ is multiplicatively closed.\par
If $x-x\nsubseteq 1-1$, then $1-1\subsetneq x-x$ by axiom (SCH2), since $0\in(x-x)\cap(1-1)$. Multiplying by $x^{-1}$ we obtain that $x^{-1}-x^{-1}\subsetneq 1-1$. Therefore, $x^{-1}\in\mathcal{O}$.\par
Assume that $x-x\subseteq 1-1$ and $y-y\subseteq 1-1$. We claim that if $z\in x+y$, then $z-z\subseteq 1-1$. Pick $a\in z-z$. Since $z\in x+y$ we have that
\[
a\in z-z\subseteq (x-x)+(y-y)\subseteq (1-1)+(1-1),
\]
so there exists $x'\in 1-1$ and $y'\in 1-1$ such that $a\in x'+y'$. Hence, $1\in x'+1$ by reversibility, and
\[
a\in x'+y'\subseteq (x'+1)-1=1-1,
\]
where we have used axiom (SCH1). We proved that $\mathcal{O}$ is a valuation hyperring in $F$.\par
Clearly, $\mathcal{O}^\times=\{x\in F\mid x-x=1-1\}$. Set $vF:=F^\times/\mathcal{O}^\times$ and let $v:F^\times\to vF$ be the canonical epimorphism. We have to show that $(F,v)$ is a Krasner valued hyperfield. It is a valued hyperfield by Proposition \ref{FtimesOtimes}. Let us now verify the validity of axioms (KVH1) and (KVH2).\par
Take $x,y\in F^\times$ and assume that $0\notin x+y$. Pick $z,t\in x+y$ and suppose that $vz<vt$. This means that $tz^{-1}\in\mathcal{O}\setminus\mathcal{O}^\times$, so
\[
tz^{-1}-tz^{-1}\subsetneq 1-1.
\]
But then $t-t\subsetneq z-z$ which contradicts axiom (SCH3). We have proved that (KVH1) holds for $(F,v)$. \par
We now have to verify (KVH2). 
Our first claim is that $v(1-1)$ is a final segment of $vF$ which does not contain $0$. Pick $a\in 1-1$ and $\gamma>va$. Let $b\in F^\times$ be such that $vb=\gamma$. Since $vb>va$, we have that $b-b\subsetneq a-a$. Now, if $b\notin 1-1$, then axiom (SCH4) implies $a-a\subseteq b-b$. Therefore, $b\in1-1$ must hold and thus $\gamma=vb\in v(1-1)$ and $v(1-1)$ is a final segment.\par 
For $x\in F^\times$, if $x\in x-x$, then $x-x=\{x\}$ by axiom (SCH1). However, since $0\in x-x$, this cannot be. An element $x\in F^\times$ has value $0$ if and only if $x-x=1-1$. Hence it cannot belong to $1-1$ as $x\notin x-x$ for all $x\in F^\times$. This shows that $v(1-1)$ does not contain $0$.\par
We let $\rho_v$ be the complement in $vF$ of $v(1-1)$. Then $\rho_v$ is an initial segment of $vF$ which contains $0$.\par 
Observe that for all $x\in F$ we have that $a\in x-x=(1-1)x$ if and only if there exists $y\in 1-1$ such that $a=yx$. This implies that $va=vy+vx>\rho_v+vx$. Conversely, if $va>\rho_v+vx$, then $v(ax^{-1})\in v(1-1)$, so there exists $b\in 1-1$ such that $vb=v(ax^{-1})$. Therefore, 
\[
b\in 1-1=bxa^{-1}-bxa^{-1}=b(xa^{-1}-xa^{-1}),
\]
so $1\in xa^{-1}-xa^{-1}$ implying that $a\in x-x$.\par
Take now $x,y\in F$ such that $x\neq y$ and $vx\leq vy$. Fix $z\in x-y$. We have to show that $t\in x-y$ if and only if $d_v(z,t)>\rho_v+vx$.\par 
Assume first that $t\in x-y$. If $t=z$ there is nothing to show. Otherwise, it suffices to show that $z-t\subseteq x-x$ by what we have already shown above. Take $a\in z-t$. Since $z\in x-y$, we have that
\[
a\in z-t\subseteq x-(y+t).
\]
Hence, there exists $b\in y+t$ such that $a\in x-b$. We obtain that $b\in (x-a)\cap(y+t)$, so $y+t\subseteq x-a$ or $x-a\subseteq y+t$ by axiom (SCH2). In the first case, since $x\in y+t$ we have that $x\in x-a$ and $a\in x-x$ follows. It remains to deal with the case $x-a\subsetneq y+t$. In this case, since $t\in x-y$ and $y-y\subseteq x-x$, we obtain that
\[
x-a\subsetneq y+t\subseteq y+x-y\subseteq x+(x-x).
\]
Take $c\in x-a$. There exists $x'\in x-x$ such that $c\in x+x'$. This implies that
\[
a\in x-c\subseteq x-(x+x')=x-x
\]
where we have used axiom (CH4) and axiom (SCH1) to conclude that $x+x'=\{x\}$.\par
Now, assume that $d_v(z,t)>\rho_v+vx$. We have to prove that $t\in x-y$. If $z=t$, then there is nothing to show. Hence, we can assume that $z\neq t$ and so $z-t\subsetneq x-x$ must hold. We have that
\[
t\in t+0\subseteq t+z-z\subsetneq x-x+z.
\]
Hence, there is $b\in z-x$ such that $t\in x+b$. We conclude that $b\in(z-x)\cap(t-x)$. By axiom (SCH2) we then obtain that $z-x\subseteq t-x$ or $t-x\subseteq z-x$. In the first case, $-y\in z-x\subseteq t-x$ and $t\in x-y$ follows. It remains to deal with the case $t-x\subsetneq z-x$. In this case, since $z\in x-y$, we have that
\[
t-x\subsetneq z-x\subseteq x-y-x=x-x-y.
\]
Take $a\in t-x$. There exists $x'\in x-x$ such that $a\in x'-y$. Now, by reversibility and since $a\in t-x$, we have that 
\[
-y\in a-x'\subseteq t-(x+x')=t-x,
\]
where for the last equality we have used axiom (SCH1). Now, $t\in x-y$ follows by axiom (CH4).
\end{proof}
Directly from Proposition \ref{Mittas1} and Proposition \ref{Mittas2} above, we deduce the following characterization theorem for the hyperfields which admit a Krasner valuation.
\begin{theorem}\label{maintw}
Let $F$ be a hyperfield. Then $F$ admits a Krasner valuation if and only if the additive hypergroup of $F$ is superiorly canonical.
\end{theorem}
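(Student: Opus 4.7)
The plan is to observe that this theorem is an immediate corollary of the two propositions just proved, so no new arguments are required. Specifically, the forward implication \textit{($F$ admits a Krasner valuation $\Rightarrow$ additive hypergroup is superiorly canonical)} is precisely the content of Proposition \ref{Mittas1}. So I would simply cite that proposition, noting that whenever $v$ is a Krasner valuation on $F$, the axioms (SCH1)--(SCH4) have already been verified there using the ultrametric structure $d_v$ from Proposition \ref{Kraultr} together with axioms (KVH1) and (KVH2).

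For the converse implication \textit{(superiorly canonical additive hypergroup $\Rightarrow$ $F$ admits a Krasner valuation)}, I would invoke Proposition \ref{Mittas2}. That proposition constructs the candidate valuation hyperring $\mathcal{O}:=\{x\in F\mid x-x\subseteq 1-1\}$ directly from the superiorly canonical structure, and then uses Proposition \ref{FtimesOtimes} to produce the associated valuation $v$ on $F$ with $\mathcal{O}_v=\mathcal{O}$. Finally, it verifies that $v$ satisfies (KVH1) and (KVH2) by taking the final segment $\rho_v$ to be the complement in $vF$ of $v(1-1)$ (which is a final segment not containing $0$, as shown in the proof).

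Since both directions have been established in complete detail in the preceding two propositions, my proof would consist essentially of a single sentence: \emph{The theorem follows immediately by combining Proposition \ref{Mittas1} and Proposition \ref{Mittas2}}. There is no hard step here; all the real work has already been absorbed into the proofs of the two propositions. The only subtlety worth pointing out explicitly is that the \lq\lq Krasner valuation\rq\rq\ produced by Proposition \ref{Mittas2} from a given superiorly canonical structure is canonical (up to equivalence) by Corollary \ref{eqvalhyp}, so the two conditions in the theorem are truly equivalent at the level of hyperfields, not merely of valued hyperfields.
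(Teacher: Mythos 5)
Your proposal is correct and follows exactly the paper's own route: the paper states the theorem directly after Propositions \ref{Mittas1} and \ref{Mittas2}, deducing it from them without further argument. Your description of the two directions and of the supporting constructions (the ultrametric $d_v$, the valuation hyperring $\mathcal{O}$, and the final segment $\rho_v$) matches the paper's setup precisely.
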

\begin{example}
The additive hypergroup of the hyperfield that we have considered in Example \ref{noKraVal} above is not superiorly canonical. Indeed,
\[
1T+1T=\{0T,1T\}
\]
and thus (SCH1) fails. It follows from Theorem \ref{maintw} that this hyperfield does not admit Krasner valuations at all.
\end{example}
\begin{example}
Consider a generalised tropical hyperfield $\mathcal{T}(\Gamma)$, where $\Gamma$ is some non-trivial ordered abelian group (see Example \ref{TGamma}). By Proposition \ref{exquot} $(iv)$ and Lemma \ref{vT} we conclude that $\mathcal{T}(\Gamma)$ is a valued hyperfield (see also the discussion after Proposition \ref{Coars}). Nevertheless, by Theorem \ref{maintw}, $\mathcal{T}(\Gamma)$ does not admit Krasner valuations as its additive hypergroup does not satisfy (SCH1).
\end{example}
Let us now analyse another example.
\begin{example}\label{last}
Take $K=\Q(X)$ with the valuation $v:=v_p\circ v_X$ where $v_X$ denotes the $X$-adic valuation on $\Q(X)$ and $v_p$ denotes the $p$-adic valuation on $\Q$, for some prime number $p$. More explicitly, for a polynomial $f(X)=\sum_{i=0}^na_iX^i$ in $\Q[X]$, we have
\[
vf(X)=\left(v_Xf(X),v_p(a_{v_Xf(X)})\right)\in\mathbb{Z}\times\mathbb{Z}.
\]
The order relation in $vK$ is the lexicographic order of $\mathbb{Z}\times\mathbb{Z}$, that is, $(n_1,n_2)<(m_1,m_2)$ if and only if $n_1<m_1\vee (n_1=m_1\wedge n_2<m_2)$.\par
Consider the Krasner valued hyperfield $F:=K_\rho$ where $\rho$ is the smallest initial segment of $\mathbb{Z}\times\mathbb{Z}$ which contains $\{0\}\times\mathbb{Z}$ (cf.\ Example \ref{Kgamma}). Let us denote its Krasner valuation $v_\rho$ by $w$.\par
Note that, if $(m_1,m_2)\in\rho$ and $m_1>0$, then, since $\rho$ is an initial segment, we have that 
\[
\{0\}\times\mathbb{Z}\subseteq\{(k_1,k_2)\in\mathbb{Z}\times\mathbb{Z}\mid(k_1,k_2)\leq(m_1-1,m_2)\}\subsetneq\rho,
\]
in contradiction with the minimality of $\rho$. Therefore, $m_1\leq 0$ for all $(m_1,m_2)\in\rho$. Conversely, if $m_1\leq 0$, then $(m_1,m_2)\leq(0,m_2)$ for all $m_2\in\mathbb{Z}$ and therefore $(m_1,m_2)\in\rho$ since $\rho$ is an initial segment containing $\{0\}\times\mathbb{Z}$. It follows that
\begin{equation}\label{rho}
\rho=\{(m_1,m_2)\in\mathbb{Z}\times\mathbb{Z}\mid m_1\leq 0\}=\{(m_1,m_2+n)\in\mathbb{Z}\times\mathbb{Z}\mid m_1\leq 0\}=\rho+(0,n)
\end{equation}
for any $(0,n)\in\{0\}\times\mathbb{Z}$.\par
By Proposition \ref{Mittas1}, the additive hypergroup of $F$ is superiorly canonical. Therefore, by Proposition \ref{Mittas2}, we have the valuation hyperring
\[
\mathcal{O}_u=\{x\in F\mid x-x\subseteq 1-1\}
\]
for some Krasner valuation $u$ on $F$. Let us now show that $w$ and $u$ are not equivalent valuations on $F$.\par
In fact, we will show that $\mathcal{O}_w\subsetneq\mathcal{O}_u$. Pick $x\in \mathcal{O}_w$, i.e., $wx\geq (0,0)$. By axiom (KVH2) we have that $y\in x-x$ if and only if $wy>\rho+wx\geq\rho$. Another application of axiom (KVH2) shows that $x-x\subseteq 1-1$ so that $x\in\mathcal{O}_u$. Now, consider e.g. the element $x$ of $F$ corresponding to the rational number $p^{-1}$ in $K$. Since $v_p(p^{-1})=-1$ we have that $wx=(0,-1)<(0,0)$ so that $x\notin\mathcal{O}_w$. On the other hand, since $wx=(0,-1)\in\{0\}\times\mathbb{Z}$, we have that $\rho+wx=\rho$ by \eqref{rho} and thus using (KVH2), we obtain that $y\in x-x$ if and only if $wy>\rho$ if and only if $y\in1-1$. This implies that $x-x\subseteq 1-1$ and thus $x\in\mathcal{O}_u$.
\end{example}

The above example constitutes the starting point for the discussion that we will have in Section \ref{sec5}.

\section{More on ordered abelian groups}\label{sec4}

In this section, we characterise generalised tropical hyperfields and discuss the concept of coarsening of a valuation in the multivalued setting. We derive some conclusions on the relation between ordered abelian groups and generalised tropical hyperfields.

\subsection{Characterisation of generalised tropical hyperfields}

To characterise generalised tropical hyperfields, we will apply another remarkable characterisation result. A hyperfield is \emph{stringent} if $x+y$ is a singleton whenever $0\notin x+y$. Bowler and Su in \cite{BS21} characterised stringent hyperfields and we will now briefly recall their result.\par 
In \cite[Section 4]{BS21} a natural construction of a hyperfield arising from a short exact sequence
\begin{equation}\label{ses}
\begin{tikzcd}
\{1\}\arrow[r]&F^\times\arrow[r,"\varphi"] & H^\times\arrow[r,"\psi"]&\Gamma\arrow[r]&\{0\}
\end{tikzcd}
\end{equation}
where $\Gamma$ is an ordered abelian group and $F$ is any hyperfield is described. The thus obtained hyperfield has $H^\times$ as multiplicative group and is called the \emph{$\Gamma$-layering of} $F$ \emph{along the short exact sequence} \eqref{ses}. After that the following theorem is proved.
\begin{theorem}[Theorem 4.10 in \cite{BS21}]\label{BS}
A hyperfield $H$ is stringent if and only if it is the \emph{$\Gamma$-layering of} $F$ \emph{along the short exact sequence} \eqref{ses} with $F$ being (isomorphic to) either $\K$, $\mathbb{S}$ or a field.
\end{theorem}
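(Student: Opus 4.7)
The plan is to exploit the strong rigidity inherent in stringency. In any canonical hypergroup, axioms (CH3) and (CH4) imply that $0 \in x+y$ holds if and only if $y = -x$; hence in a stringent hyperfield the hyperaddition is single-valued on every pair $(x,y)$ with $y \neq -x$, and the only possibly multivalued sums take the form $x + (-x) = x \cdot (1-1)$. Consequently the full hyperaddition of $H$ is determined by the multiplicative group $H^\times$, the single-valued sum map $(x,y) \mapsto x \oplus y$ defined on the set $\{y \neq -x\}$, and the distinguished subset $1 - 1 \subseteq H$.

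For the forward direction, suppose $H$ is the $\Gamma$-layering of $F$ along \eqref{ses} with $F$ isomorphic to $\K$, $\mathbb{S}$, or a field. The Bowler--Su construction sets $a + b = \{a\}$ whenever $\psi(a) < \psi(b)$ (and symmetrically for $>$), and uses the $F$-structure on a fiber representative when $\psi(a) = \psi(b)$. Each of $\K$, $\mathbb{S}$ and every field is immediately seen to be stringent, so within-fiber sums are singletons unless they contain $0$, and cross-fiber sums are automatic singletons; stringency of $H$ follows.

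The reverse direction is the substantive content. Given a stringent $H$, I would introduce a pre-order $\preceq$ on $H^\times$ by declaring $a \preceq b$ iff $b \in a + b$, together with $a \preceq a$ by convention. Using stringency, reversibility (CH4) and distributivity, one shows that $\preceq$ is reflexive and transitive and that $a \preceq b$ implies $ac \preceq bc$ for all $c \in H^\times$. Then the set $F^\times := \{a \in H^\times : a \preceq 1 \text{ and } 1 \preceq a\}$ is a subgroup of $H^\times$, the quotient $\Gamma := H^\times/F^\times$ inherits a total order making it an ordered abelian group, and $F := F^\times \cup \{0\}$ is closed under the hyperaddition of $H$. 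Together these facts exhibit the short exact sequence \eqref{ses} and identify $H$ with the $\Gamma$-layering of $F$. Finally, since every non-zero element of $F$ is $\preceq$-equivalent to $1$, a case analysis on $1+1$ and $1+(-1)$ classifies $F$: if $1+1 = \{0\}$ then Lemma \ref{1-1=0} gives a field; if $-1 = 1$ and $1+1 \neq \{0\}$, then $|F| = 2$ and $F \simeq \K$; otherwise $-1 \neq 1$ and stringency combined with uniqueness in (CH3) forces $|F| = 3$ and $F \simeq \mathbb{S}$.

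The main obstacle will be verifying that $\preceq$ descends to a genuine, multiplication-compatible total order on $\Gamma$. Antisymmetry of $\preceq$ modulo $F^\times$ requires ruling out, for elements $a, b$ in different cosets, the simultaneous occurrence of $a \in a+b$ and $b \in a+b$; and showing that $F$ is closed under hyperaddition (so that the layering reconstruction goes through) requires associativity (CH1) to interact cleanly with stringency. I expect this order-theoretic bookkeeping, rather than the final classification of $F$, to be the technical heart of the proof.
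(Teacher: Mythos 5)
The paper never proves Theorem \ref{BS}. It is cited verbatim from Bowler--Su \cite{BS21} as an external black box and is used only as an input to Theorem \ref{charTGamma}; there is therefore no internal proof to compare your sketch against.

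Taken on its own, your reconstruction has a genuine gap at its center: the preorder $a \preceq b \iff b \in a+b$ breaks down when $H$ is a field. In a field, addition is singlevalued, so $b \in a+b$ forces $a+b = b$ and hence $a = 0$; thus $\preceq$ collapses to the identity relation on $H^\times$, giving $F^\times = \{1\}$ and $\Gamma = H^\times$. But $H^\times$ generally has torsion (e.g.\ $-1 \in \Q^\times$, or all of $\F_3^\times$), and a torsion element cannot sit inside an ordered abelian group, so the short exact sequence \eqref{ses} you are trying to build cannot exist. The right comparison is not ``absorption in sums'' but the sets $a - a = a(1-1)$: the subgroup should be $F^\times = \{a \in H^\times : a - a = 1-1\}$, which correctly recovers $K^\times$ when $H = K$ is a field, $\{1\}$ when $H = \mathcal{T}(\Gamma)$, and $\mathbb{S}^\times$ when $H = \mathbb{S}$. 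This is precisely the mechanism the paper itself employs for Krasner valuations in Proposition \ref{Mittas2}. Two further remarks: the ``closure'' of $F = F^\times \cup \{0\}$ under addition can only mean closure under the \emph{induced} relational operation (cf.\ Example \ref{subvsstrictsub}), not the ambient one; and your closing case analysis is incomplete, since the case $-1 \neq 1$ does not separate $\mathbb{S}$ from a field of characteristic $\neq 2$ such as $\Q$, so forcing $|F| = 3$ there is wrong --- you must branch on whether $1 - 1 = \{0\}$ (Lemma \ref{1-1=0} then gives a field) or not.
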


From the details of the construction (which we omit for brevity) it is not difficult to verify that the extension of the map $\varphi$ sending $0_F$ to $0_H$ is in any case an embedding of hyperfields.

We say that a hyperfield has characteristic $2$ if $0\in 1+1$ and that it has C-characteristic $1$ if $1\in 1+1$. For example, $\mathbb{S}$ satisfies the latter but not the former while $\K$ satisfies both. For more information on characteristic and C-characteristic of hyperfields the reader can see \cite{LKS23}.

We are now ready to state and prove our characterisation theorem.

\begin{theorem}[Characterisation of generalised tropical hyperfields]\label{charTGamma}
Generalised tropical hyperfields are precisely stringent hyperfields of characteristic $2$ and C-characteristic $1$.
\end{theorem}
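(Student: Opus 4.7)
The proof splits cleanly into the two implications.

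For the easy direction, work directly in $\mathcal{T}(\Gamma)$, where $0_{\mathcal{T}(\Gamma)}=\infty$ and $1_{\mathcal{T}(\Gamma)}=0_\Gamma$. Stringency is immediate from the definition of $\boxplus$: if $x\neq y$, then $x\boxplus y=\{\min\{x,y\}\}$ is a singleton, while if $x=y$ then $\infty\in[x,\infty]=x\boxplus x$, so $0_{\mathcal{T}(\Gamma)}\in x\boxplus y$. Characteristic $2$ and C-characteristic $1$ both follow from $1\boxplus 1=[0_\Gamma,\infty]$, which contains $\infty=0_{\mathcal{T}(\Gamma)}$ and $0_\Gamma=1_{\mathcal{T}(\Gamma)}$.

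For the converse, assume $H$ is stringent, of characteristic $2$, and of C-characteristic $1$. I would invoke Theorem \ref{BS} to obtain a short exact sequence (\ref{ses}) presenting $H$ as the $\Gamma$-layering of $F$, with $F$ one of $\K$, $\mathbb{S}$, or a field. Using the remark after Theorem \ref{BS}, the (extended) map $\varphi\colon F\hookrightarrow H$ is an embedding of hyperfields, so the hypersum $1_F+1_F$ inside $F$ equals $(1_H+1_H)\cap\varphi(F)$; in particular, characteristic $2$ and C-characteristic $1$ transfer from $H$ to $F$. Now $\mathbb{S}$ is not of characteristic $2$ (since $1+1=\{1\}$), and any field fails C-characteristic $1$ (since $1+1=\{2\}$ and $1\neq 2$). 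The only remaining option is $F\simeq\K$.

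It then remains to identify the $\Gamma$-layering of $\K$ along (\ref{ses}) with $\mathcal{T}(\Gamma)$. Since $\K^\times$ is trivial, the exact sequence forces $\psi\colon H^\times\to\Gamma$ to be a group isomorphism, so we may transport the hyperoperation of $H$ to $\Gamma\cup\{\infty\}$ via $\psi$ (sending $0_H$ to $\infty$). Unwinding the layering recipe from \cite{BS21}: for $a,b\in H^\times$ with $\psi(a)\neq\psi(b)$, stringency plus (V3)-type monotonicity gives $a+b=\{\min\}$; for $\psi(a)=\psi(b)=\gamma$, the summation in the fibre is governed by $1+1=\{0,1\}$ in $\K$, producing the full final segment starting at $\gamma$ together with $0_H$. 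This matches $\boxplus$ on $\mathcal{T}(\Gamma)$ verbatim, yielding the required isomorphism.

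\textbf{Main obstacle.} The genuinely delicate step is the last one: translating the abstract layering construction of Bowler–Su for $F=\K$ into the explicit $\boxplus$ of Example \ref{TGamma}. Everything else — direct verification in $\mathcal{T}(\Gamma)$ and the elimination of $\mathbb{S}$ and of field $F$ — is bookkeeping once Theorem \ref{BS} and the embedding remark are in hand.
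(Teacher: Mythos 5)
Your proposal follows the same route as the paper: verify the easy direction directly in $\mathcal{T}(\Gamma)$, invoke Theorem~\ref{BS} for the converse, use the remark that $\varphi$ extends to an embedding of hyperfields to rule out $F\simeq\mathbb{S}$ and $F$ a field (you use characteristic $2$ against $\mathbb{S}$ and C-characteristic $1$ against fields, while the paper uses $1=-1$ against $\mathbb{S}$ and the non-singleton $\{0,1\}\subseteq 1+_F 1$ against fields --- a cosmetic difference), and then identify the $\Gamma$-layering of $\K$ with $\mathcal{T}(\Gamma)$ by unwinding the Bowler--Su construction. The proof is correct and essentially identical to the one in the paper.
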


\begin{proof}
The reader can easily verify from the relevant definitions that a generalised tropical hyperfield $\mathcal{T}(\Gamma)$ is a stringent hyperfield of characteristic $2$ and C-characteristic $1$.\par
For the converse, let $H$ be a stringent hyperfield of characteristic $2$ and C-characteristic $1$. By Theorem \ref{BS} and our observations on the extension of the map $\varphi$ above, we have that either $\K$, $\mathbb{S}$ or a field embed into $H$. Since $H$ has characteristic $2$ we have that $1=-1$, thus $\mathbb{S}$ cannot embed into $H$. On the other hand, any field cannot embed into $H$ neither. Indeed, our assumption $0,1\in 1+1$ implies that $\{0,1\}=(1+1)\cap\{0,1\}\subseteq (1+1)\cap F=1+_F 1$ is not a singleton. It follows that $H$ is the $\Gamma$-layering of $\K$ along a short exact sequence 
\[
\begin{tikzcd}
\{1\}\arrow[r]&\K^\times\arrow[r,"\varphi"] & H^\times\arrow[r,"\psi"]&\Gamma\arrow[r]&\{0\}
\end{tikzcd}
\]
for some ordered abelian group $\Gamma$. Now, since $\K^\times=\{1\}$, we obtain that the multiplicative group of $H$ is isomorphic to $\Gamma$ and the details of the construction given in \cite[Section 4]{BS21} show that the hyperoperation of $H$ is the one of $\mathcal{T}(\Gamma)$. We conclude that $H$ is a generalised tropical hyperfield. 
\end{proof}

An analogous reasoning yields to the following (but maybe less interesting) characterisation of strict generalised tropical hyperfields.

\begin{theorem}
Generalised tropical hyperfields are precisely the $\Gamma$-layerings of $F$ along the short exact sequence \eqref{ses}, with $F=\F_2$.
\end{theorem}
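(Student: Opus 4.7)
The plan is to mirror the proof of Theorem \ref{charTGamma}, replacing $\K$ with $\F_2$ throughout. (Note that the statement is intended for \emph{strict} generalised tropical hyperfields, as indicated by the preceding paragraph; the unqualified case has already been handled in Theorem \ref{charTGamma}.)

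For the easy direction I would verify that $\mathcal{T}'(\Gamma)$ arises as the $\Gamma$-layering of $\F_2$ along the short exact sequence
\[
\begin{tikzcd}
\{1\} \arrow[r] & \F_2^\times \arrow[r, "\varphi"] & \mathcal{T}'(\Gamma)^\times \arrow[r, "\psi"] & \Gamma \arrow[r] & \{0\}.
\end{tikzcd}
\]
Here $\F_2^\times=\{1\}$ is trivial, so the map $\varphi$ is forced, while $(\mathcal{T}'(\Gamma)^\times,\cdot)$ is canonically identified with $(\Gamma,+)$, which pins down $\psi$. Unpacking the Bowler--Su construction of the $\Gamma$-layering then yields exactly the hyperoperation $\boxplus'$: when $x\neq y$ one reads off $\min\{x,y\}$ as in all layerings, and when $x=y$ one obtains the open-ended interval $(x,\infty]$, precisely because $1+_{\F_2}1=\{0\}$ does not contain $1$, in contrast to the $\K$-layering where $1+_{\K}1=\{0,1\}$ produced the closed interval $[x,\infty]$.

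For the converse, suppose $H$ is a $\Gamma$-layering of $\F_2$ along such a short exact sequence. Since $\F_2^\times$ is trivial, the multiplicative group $H^\times$ is isomorphic to $\Gamma$. The layering data together with the choice $F=\F_2$ then completely determine the hyperoperation on $H$; matching it against the definition of $\mathcal{T}'(\Gamma)$ from Example~\ref{T'Gamma} shows that $H$ is (isomorphic to) the strict generalised tropical hyperfield $\mathcal{T}'(\Gamma)$.

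The only real work, and the step at which one might slip up, is the careful unpacking of the Bowler--Su construction in the case $F=\F_2$. Once this is done, the comparison with $\boxplus'$ is essentially mechanical: the single relevant difference from the analogous calculation in Theorem \ref{charTGamma} is the fact $1\notin 1+_{\F_2}1$ (versus $1\in 1+_{\K}1$), which is exactly what turns $[x,\infty]$ into $(x,\infty]$.
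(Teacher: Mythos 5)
Your proposal is correct and is as faithful a reconstruction of the paper's (entirely omitted) proof as one could hope for: the paper only says ``analogous reasoning'' and never spells it out. You correctly flag the typo (the theorem should read ``\emph{strict} generalised tropical hyperfields''), and the decisive computational observation---that $1\notin 1+_{\F_2}1$ whereas $1\in 1+_{\K}1$, which is exactly what replaces $[x,\infty]$ by $(x,\infty]$ in the layering at the diagonal---is the right one and is consistent with the fact that $\mathcal{T}'(\Gamma)$ has characteristic $2$ but \emph{not} C-characteristic $1$.

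One small structural remark, which you half-acknowledge when you say you will ``mirror'' Theorem \ref{charTGamma}: because this theorem is stated directly in terms of $\Gamma$-layerings rather than in terms of stringency plus (C-)characteristic, neither direction actually needs Theorem \ref{BS}. In the proof of Theorem \ref{charTGamma} the Bowler--Su classification is what converts the \emph{intrinsic} hypotheses (stringent, char $2$, C-char $1$) into the statement ``$H$ is a layering of $\K$''; here that conversion is unnecessary, since being a layering of $\F_2$ is already the hypothesis for the converse and is the thing to be verified for $\mathcal{T}'(\Gamma)$ in the forward direction. So your proof is really a direct identification of the Bowler--Su layering construction with the hyperoperation $\boxplus'$ of Example \ref{T'Gamma}, not a genuine mirror of the $\K$ argument. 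This is fine---arguably it is simpler than Theorem \ref{charTGamma}'s proof, which is presumably why the author calls this version ``maybe less interesting.'' The only place where real care is required is, as you say, the unpacking of the layering construction at a fixed level $\gamma$, where $1+_{\F_2}1=\{0\}$ produces $\{z\mid\psi(z)>\gamma\}\cup\{0_H\}=(\gamma,\infty]$, as opposed to the $\K$-case where the extra element $1\in 1+_{\K}1$ adds the point $\gamma$ itself and closes the interval.
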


\subsection{Coarsenings}

\begin{definition}
Let $(\Gamma,<,+,0)$ be an ordered abelian group. A subgroup $\Delta$ of $\Gamma$ is \emph{convex} if for all $\gamma\in\Gamma$ we have that if there exist $\delta_1,\delta_2\in\Delta$ such that $\delta_1<\gamma<\delta_2$, then $\gamma\in\Delta$.
\end{definition}

It is not difficult to see that the intersection of a family of convex subgroups of an ordered abelian group is again a convex subgroup and that the collection of all convex subgroups of an ordered abelian group is linearly ordered by inclusion. Let us recall another basic fact which makes convex subgroups important.

\begin{fact}
Let $(\Gamma,<,+,0)$ be an ordered abelian group and $\Delta$ a convex subgroup of $\Gamma$. Then $(\Gamma/\Delta,\prec,+,0)$ is an ordered abelian group, where
\[
x+\Delta\prec y+\Delta\iff x<y\text{ and }y-x\notin\Delta.
\]
In particular, the canonical epimorphism $\Gamma\to\Gamma/\Delta$ is order preserving.
\end{fact}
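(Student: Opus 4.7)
The plan is to verify, in order, that the relation $\prec$ is well-defined on $\Gamma/\Delta$, that it is a strict linear order, that it is compatible with the (well-defined) quotient group operation, and finally that the canonical epimorphism is order preserving. The convexity of $\Delta$ enters exactly where it is needed, namely in well-definedness and transitivity; the rest is direct.

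For well-definedness, I would fix representatives $x,x',y,y'$ with $x+\Delta=x'+\Delta$ and $y+\Delta=y'+\Delta$, so that $y'-x'=(y-x)+\delta$ for some $\delta\in\Delta$. Since $\Delta$ is a subgroup, the condition $y-x\notin\Delta$ is clearly equivalent to $y'-x'\notin\Delta$. The main point is then to show that, assuming $y-x\notin\Delta$, the inequality $x<y$ is equivalent to $x'<y'$. If $x<y$ but $y'\leq x'$, then $y-x+\delta\leq 0$, i.e.\ $0<y-x\leq-\delta$; since $0,-\delta\in\Delta$, convexity of $\Delta$ forces $y-x\in\Delta$, a contradiction. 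This is the essential use of convexity and is the step I expect to be the main obstacle to write cleanly; the symmetric case is handled the same way.

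Next I would check the order axioms. Irreflexivity is immediate from $x\not<x$. For transitivity, if $x+\Delta\prec y+\Delta\prec z+\Delta$, then $x<y<z$, so $x<z$; moreover $0<y-x<z-x$, and if $z-x$ lay in $\Delta$, convexity (together with $0\in\Delta$) would place $y-x$ in $\Delta$, contradicting the hypothesis. Trichotomy follows from trichotomy in $\Gamma$: given cosets $x+\Delta$ and $y+\Delta$ either $y-x\in\Delta$ (equality) or $y-x\notin\Delta$, and in the latter case the sign of $y-x$ in $\Gamma$ selects which of $x+\Delta\prec y+\Delta$ or $y+\Delta\prec x+\Delta$ holds.

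For compatibility with $+$, note that $(y+z)-(x+z)=y-x$, so both defining conditions for $\prec$ are preserved under translation by any $z+\Delta$; this also shows $+$ itself is well-defined on cosets by a standard argument. Finally, for the canonical epimorphism $\pi\colon\Gamma\to\Gamma/\Delta$, given $x\leq y$ in $\Gamma$, either $y-x\in\Delta$, in which case $\pi(x)=\pi(y)$, or $y-x\notin\Delta$, in which case $x<y$ and hence $\pi(x)\prec\pi(y)$; in either case $\pi(x)\preceq\pi(y)$, so $\pi$ is order preserving.
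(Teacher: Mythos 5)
Your proof is correct and is the standard argument. Note, however, that the paper states this result as a \emph{Fact} and explicitly offers it without proof (introducing it with ``Let us recall another basic fact''), so there is no in-text proof to compare against; your write-up fills in exactly the verification one would expect, using convexity precisely where it is indispensable (well-definedness of $\prec$ across choices of coset representatives, and transitivity), with the remaining order and compatibility axioms following by direct computation with representatives.
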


The following notion will play a fundamental role later.

\begin{definition}
Let $(\Gamma,<,+,0)$ be an ordered abelian group and $\rho$ an initial segment of $\Gamma$. We call the set
\[
\ig(\rho):=\{\gamma\in\Gamma\mid\rho+\gamma=\rho\}
\]
the \emph{invariance group} of $\rho$. If $\ig(\rho)=\{0\}$, then we say that $\rho$ has \emph{trivial invariance group}.
\end{definition}

\begin{remark}
In \cite{Kuh20,KuhCut} F.-V.\ Kuhlmann proves several results about invariance groups associated to initial segments in ordered abelian groups. In particular, it follows from \cite[Lemma 2.3]{KuhCut} that the invariance group of an initial segment $\rho$ of an ordered abelian group $\Gamma$ such that $0\in\rho$ is a convex subgroup of $\Gamma$ contained in $\rho$. 
\end{remark}

\begin{definition}
Let $v,w$ be two valuations on a hyperfield $F$. We say that $w$ is a \emph{coarsening} of $v$ if $\mathcal{O}_v\subseteq\mathcal{O}_w$. 
\end{definition}

In classical valuation theory for fields, it is well-known that to any convex subgroup of the value group one can associate a coarsening.
We note that this result generalises to valued hyperfields, actually with a much more conceptual proof.

\begin{proposition}\label{Coars}
Let $(F,v)$ be a valued hyperfield and let $\Delta$ be a convex subgroup of $vF$. Then 
\begin{align*}
v_\Delta:F&\to (vF/\Delta)\cup\{\infty\}\\
x&\mapsto vx+\Delta
\end{align*}
is a valuation on $F$ which is a coarsening of $v$.
\end{proposition}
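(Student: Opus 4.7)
The plan is to factor $v_\Delta$ through the tropical machinery of Lemma \ref{valhom}, rather than verifying (V1)--(V3) by hand. Let $q:vF\to vF/\Delta$ be the canonical epimorphism of ordered abelian groups (which is order preserving by the fact recalled just before the statement). I would extend $q$ to a map
\[
\pi:\mathcal{T}(vF)\to \mathcal{T}(vF/\Delta)
\]
by setting $\pi(\infty):=\infty$, so that $v_\Delta=\pi\circ v$ as a map $F\to\mathcal{T}(vF/\Delta)$.

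The first step is to check that $\pi$ is a surjective homomorphism of hyperfields. Surjectivity, (HH1), (HH2), (HH4), (HH5) are immediate from the corresponding properties of $q$ together with the conventions on $\infty$. The only nontrivial axiom is (HH3), i.e.\ $\pi(x\boxplus y)\subseteq \pi(x)\boxplus \pi(y)$ for all $x,y\in\mathcal{T}(vF)$. This splits into two cases according to the definition of $\boxplus$ in Example \ref{TGamma}. If $x\neq y$, say $x<y$, then $x\boxplus y=\{x\}$ and $\pi(x)\leq \pi(y)$ by monotonicity; either $\pi(x)<\pi(y)$, in which case $\pi(x)\boxplus\pi(y)=\{\pi(x)\}$, or $\pi(x)=\pi(y)$, in which case $\pi(x)\boxplus\pi(y)=[\pi(x),\infty]\ni\pi(x)$. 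If $x=y$, then $x\boxplus y=[x,\infty]$, and monotonicity of $\pi$ gives $\pi([x,\infty])\subseteq [\pi(x),\infty]=\pi(x)\boxplus\pi(x)$. This handles the cases involving $\infty$ as well.

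The second step is routine: by Lemma \ref{valhom}, $v:F\to\mathcal{T}(vF)$ is a surjective homomorphism of hyperfields, so the composition $v_\Delta=\pi\circ v$ is too, and applying Lemma \ref{valhom} again we conclude that $v_\Delta$ is a valuation on $F$ with value group $vF/\Delta$.

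Finally, for the coarsening statement, take $x\in\mathcal{O}_v$, i.e.\ $vx\geq 0$. Using the definition of the order on $vF/\Delta$ recalled in the Fact above, either $vx\in\Delta$ (so $v_\Delta x=\Delta$ is the neutral element) or $vx>0$ with $vx\notin\Delta$ (so $v_\Delta x\succ\Delta$); in either case $v_\Delta x\succeq 0+\Delta$, hence $x\in\mathcal{O}_{v_\Delta}$. No step is a real obstacle here: the only slightly delicate point is the case analysis in (HH3), which is essentially dictated by the fact that $\pi$ collapses a whole convex block of $vF$ to a single value and so can only enlarge the hypersum.
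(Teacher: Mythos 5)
Your proof is correct and follows essentially the same route as the paper's: both factor $v_\Delta$ through Lemma \ref{valhom}, write it as the composition of $v$ with the extension of the canonical epimorphism $vF\to vF/\Delta$ to a hyperfield homomorphism $\mathcal{T}(vF)\to\mathcal{T}(vF/\Delta)$, and then verify the coarsening by computing $\mathcal{O}_{v_\Delta}\supseteq\mathcal{O}_v$. The only difference is that you spell out the case analysis behind the paper's claim that order preservation "easily" makes $\pi$ a homomorphism, which is a harmless expansion rather than a different argument.
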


\begin{proof}
By Lemma \ref{valhom}, $v:F\to\mathcal{T}(vF)$ is a surjective homomorphism of hyperfields. In addition, the canonical epimorphism $vF\to vF/\Delta$ extends to a surjective map $\mathcal{T}(vF)\to\mathcal{T}(vF/\Delta)$. From the fact that the latter is order preserving, it easily follows that it is a homomorphism of hyperfields. We conclude that the composition $v_\Delta:F\to\mathcal{T}(vF/\Delta)$ of $v$ with the latter map is a surjective homomorphism of hyperfields. This proofs the first part of the statement. Now, since
\begin{align*}
\mathcal{O}_{v_\Delta}&=\{x\in F\mid v_\Delta x\succeq 0_{vF/\Delta}\}\\
&=\{x\in F\mid vx+\Delta\succeq\Delta\}\\
&=\{x\in F\mid vx\geq 0\text{ or }vx\in\Delta\}\\
&\supseteq\{x\in F\mid vx\geq 0\}=\mathcal{O}_v,
\end{align*}
we conclude that $v_\Delta$ is a coarsening of $v$.
\end{proof}

In the above proof, we have seen that if $\Delta$ is a convex subgroup of an ordered abelian group $\Gamma$, then the natural map
\[
\pi_\Delta:\mathcal{T}(\Gamma)\to\mathcal{T}(\Gamma/\Delta)
\]
is a surjective homomorphism of hyperfields, which, by Lemma \ref{valhom}, is a valuation on $\mathcal{T}(\Gamma)$ with value group $\Gamma/\Delta$. The valuation hyperring of this valuation is
\[
\mathcal{O}_\Delta=\{\gamma\in\Gamma\mid\gamma+\Delta\succeq0+\Delta\}=\{\gamma\in\Gamma\mid \gamma\in\Delta\text{ or }\gamma>\Delta\}
\] 
and its unique maximal hypeideal is
\[
\mathcal{M}_\Delta=\{\gamma\in\Gamma\mid\gamma>\Delta\}
\]
It follows that the residue hyperfield $\mathcal{T}(\Gamma)\pi_\Delta=\mathcal{O}_\Delta/\mathcal{M}_\Delta$ of $\mathcal{T}(\Gamma)$ with respect to the valuation $\pi_\Delta$ is isomorphic to $\mathcal{T}(\Delta)$. An isomorphism is given by the map
\begin{align*}
\mathcal{T}(\Delta)&\to\mathcal{T}(\Gamma)\pi_\Delta\\
\infty&\mapsto\infty\\
\delta&\mapsto\delta\pi_\Delta
\end{align*}

In the next result we show that the valuations of $\mathcal{T}(\Gamma)$ are (up to equivalence) all of this form.

\begin{proposition}
Let $\Gamma$ be an ordered abelian group and assume that $v:\mathcal{T}(\Gamma)\to\mathcal{T}(\Gamma')$ is a valuation. Then there exists a convex subgroup $\Delta$ of $\Gamma$ such that $\Gamma/\Delta\simeq\Gamma'$ via an order preserving isomorphism.
\end{proposition}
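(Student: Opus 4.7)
The plan is to recover $\Delta$ as the kernel of $v$ restricted to $\Gamma$ (viewing $\Gamma$ as the multiplicative group of $\mathcal{T}(\Gamma)$) and then verify, via the hyperaddition axiom (V3), that $v|_\Gamma$ is order-preserving; convexity and the order-preserving isomorphism then follow quickly.

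First I would note that, since the multiplicative structure on $\mathcal{T}(\Gamma)$ is just $(+,0)$ of $\Gamma$ (with $\infty$ absorbing), properties (V1) and (V2) together with surjectivity of $v$ force $v|_\Gamma\colon(\Gamma,+,0)\to(\Gamma',+,0)$ to be a surjective group homomorphism. Set
\[
\Delta:=\ker(v|_\Gamma)=\{\gamma\in\Gamma\mid v(\gamma)=0_{\Gamma'}\},
\]
which is a subgroup of $\Gamma$. By the first isomorphism theorem for groups we already get a group isomorphism $\bar v\colon\Gamma/\Delta\to\Gamma'$, $\gamma+\Delta\mapsto v(\gamma)$.

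The crucial step is to prove that $v|_\Gamma$ is order-preserving. For this I would apply (V3) with $x=y$. In $\mathcal{T}(\Gamma)$ one has $x\boxplus x=[x,\infty]$, so (V3) gives $v(z)\geq v(x)$ for every $z\in\Gamma\cup\{\infty\}$ with $z\geq x$. In particular, $\gamma_1\leq\gamma_2$ in $\Gamma$ implies $v(\gamma_1)\leq v(\gamma_2)$ in $\Gamma'\cup\{\infty\}$. From this, convexity of $\Delta$ is immediate: if $\delta_1<\gamma<\delta_2$ with $\delta_1,\delta_2\in\Delta$, then $0_{\Gamma'}=v(\delta_1)\leq v(\gamma)\leq v(\delta_2)=0_{\Gamma'}$, so $v(\gamma)=0_{\Gamma'}$ and $\gamma\in\Delta$.

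It remains to verify that $\bar v$ is order-preserving in both directions (using the definition of the order on $\Gamma/\Delta$ recalled in the excerpt). If $\gamma_1+\Delta\prec\gamma_2+\Delta$, then $\gamma_1<\gamma_2$ and $\gamma_2-\gamma_1\notin\Delta$; order-preservation of $v|_\Gamma$ gives $v(\gamma_1)\leq v(\gamma_2)$, and the case of equality would force $v(\gamma_2-\gamma_1)=0_{\Gamma'}$, contradicting $\gamma_2-\gamma_1\notin\Delta$. Hence $\bar v(\gamma_1+\Delta)<\bar v(\gamma_2+\Delta)$. Conversely, if $v(\gamma_1)<v(\gamma_2)$, then $\gamma_1+\Delta\neq\gamma_2+\Delta$ (else their images would coincide), and the strict inequality forbids $\gamma_1+\Delta\succ\gamma_2+\Delta$ by the direction already proved, leaving $\gamma_1+\Delta\prec\gamma_2+\Delta$. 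Thus $\bar v$ is an order-preserving isomorphism, completing the proof.

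The only mildly subtle point is the extraction of order-preservation from axiom (V3), since the hyperaddition on $\mathcal{T}(\Gamma)$ encodes the order of $\Gamma$ only through the idempotent sums $x\boxplus x=[x,\infty]$; once this observation is made, the rest reduces to the first isomorphism theorem together with a routine comparison of orderings.
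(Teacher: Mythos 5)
Your proof is correct and takes essentially the same route as the paper's: $\Delta$ is identified as the kernel of the multiplicative homomorphism $v|_\Gamma$, and both convexity and the strict order-preservation of the induced isomorphism follow from how $v$ interacts with the hyperaddition of $\mathcal{T}(\Gamma)$. The only organizational difference is that you first isolate the reusable observation that $v|_\Gamma$ is weakly order-preserving, via (V3) applied to the idempotent sums $\gamma\boxplus\gamma=[\gamma,\infty]$, and then deduce the rest as corollaries, whereas the paper manipulates the hyperoperations $\boxplus$ and $\boxplus'$ directly for each of the two claims (using $\gamma\boxplus\gamma$ for convexity and $\gamma_1\boxplus\gamma_2=\{\gamma_1\}$ when $\gamma_1<\gamma_2$ for strict order-preservation).
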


\begin{proof}
Let $0'\in\Gamma'$ denote the neutral element of the group $\Gamma'$, $\boxplus$ the hyperoperation of $\mathcal{T}(\Gamma)$ and $\boxplus'$ the hyperoperation of $\mathcal{T}(\Gamma')$. Set $\Delta:=v^{-1}(0')$. Since $v$ is a homomorphism of groups $\Gamma\to\Gamma'$, we have that $\Delta$ is a subgroup of $\Gamma$. Moreover, since $v$ is surjective, we have that $\Gamma/\Delta\simeq\Gamma'$ as groups by the first homomorphism theorem for groups. Assume that $\delta_1,\delta_2\in\Delta$ and that $\gamma\in\Gamma$ is such that $\delta_1\leq\gamma\leq\delta_2$. Hence, $\delta_2\in\gamma\boxplus\gamma$ and so $0'=v(\delta_2)\in v(\gamma)\boxplus' v(\gamma)$ since $v$ is a homomorphism of hyperfields. On the other hand, $\gamma\in\delta_1\boxplus\delta_1$ so that $v(\gamma)\in0'\boxplus'0'=[0',\infty]$. If $v(\gamma)>0'$, then $0'\notin [v(\gamma),\infty]=v(\gamma)\boxplus'v(\gamma)$, a contradiction. We conclude that $v(\gamma)=0'$ must hold and thus $\gamma\in\Delta$. We have shown that $\Delta$ is a convex subgroup of $\Gamma$. It remains to show that the isomorphism of groups $\sigma:\gamma+\Delta\mapsto v(\gamma)$ is order preserving. Assume that $\gamma_1+\Delta\prec\gamma_2+\Delta$. This means that $\gamma_1<\gamma_2$ and $\gamma_2-\gamma_1\notin\Delta$ hold in $\Gamma$. Thus, $\{\gamma_1\}=\gamma_1\boxplus\gamma_2$ and then $v(\gamma_1)\in v(\gamma_1)\boxplus' v(\gamma_2)$. Since $\sigma$ is bijective and $\gamma_1\neq\gamma_2$, we have that $v(\gamma_1)\neq v(\gamma_2)$, so $v(\gamma_1)\in v(\gamma_1)\boxplus' v(\gamma_2)=\bigl\{\min\{v(\gamma_1),v(\gamma_2)\}\bigr\}$. This shows that $v(\gamma_1)<v(\gamma_2)$ must hold in $\Gamma'$ and thus $\sigma$ is order preserving and the proof is complete. 
\end{proof}

The next result now follows from Corollary \ref{eqvalhyp}.

\begin{corollary}
Let $\Gamma$ be an ordered abelian group. There is a bijective correspondence between convex subgroups of $\Gamma$ and valuation hyperrings of $\mathcal{T}(\Gamma)$.
\end{corollary}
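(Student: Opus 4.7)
The plan is to exhibit the natural map
\[
\Phi\colon \Delta \longmapsto \mathcal{O}_{\pi_\Delta}
\]
from the convex subgroups of $\Gamma$ to the valuation hyperrings of $\mathcal{T}(\Gamma)$, where $\pi_\Delta$ is the coarsening valuation produced by Proposition \ref{Coars} applied to the identity valuation on $\mathcal{T}(\Gamma)$, and then to show that $\Phi$ is a bijection using the preceding proposition and Corollary \ref{eqvalhyp}.

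For surjectivity, I would start with an arbitrary valuation hyperring $\mathcal{O}$ in $\mathcal{T}(\Gamma)$. Proposition \ref{FtimesOtimes} supplies a valuation $v$ on $\mathcal{T}(\Gamma)$ (the canonical projection onto $\mathcal{T}(\Gamma)^\times/\mathcal{O}^\times$) with $\mathcal{O}_v = \mathcal{O}$. Applying the preceding proposition to $v\colon \mathcal{T}(\Gamma)\to\mathcal{T}(\Gamma')$ yields a convex subgroup $\Delta$ of $\Gamma$ together with the order-preserving group isomorphism $\sigma\colon \Gamma/\Delta\to\Gamma'$ defined by $\sigma(\gamma+\Delta)=v(\gamma)$, and by construction $v = \sigma\circ \pi_\Delta$. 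Thus $v$ and $\pi_\Delta$ are equivalent valuations, and Corollary \ref{eqvalhyp} gives $\mathcal{O} = \mathcal{O}_v = \mathcal{O}_{\pi_\Delta} = \Phi(\Delta)$.

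For injectivity, suppose $\Delta_1 \neq \Delta_2$, say $\delta \in \Delta_1\setminus \Delta_2$, which we may take to be positive (replacing $\delta$ by $-\delta$ if necessary, since subgroups are closed under negation). Because $\Delta_2$ is convex and $\delta > 0$ lies outside $\Delta_2$, no positive element of $\Delta_2$ can be $\geq \delta$ (otherwise convexity would force $\delta\in\Delta_2$), hence $\delta > \Delta_2$ and therefore $-\delta < \Delta_2$. From the explicit description
\[
\mathcal{O}_{\pi_\Delta} = \{\infty\} \cup \{\gamma\in\Gamma \mid \gamma\in\Delta \text{ or } \gamma > \Delta\}
\]
obtained in the discussion after Proposition \ref{Coars}, we see $-\delta \notin \mathcal{O}_{\pi_{\Delta_2}}$, while $-\delta\in\Delta_1\subseteq\mathcal{O}_{\pi_{\Delta_1}}$. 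Hence $\Phi(\Delta_1)\neq\Phi(\Delta_2)$.

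The only step requiring real care is the surjectivity argument: one must notice that the proof of the preceding proposition actually produces an equivalence of valuations (not merely an isomorphism of value groups), since $\sigma$ is constructed precisely so that $\sigma\circ\pi_\Delta = v$ on $\Gamma$ (and both send $\infty$ to $\infty$). Everything else is bookkeeping on top of the results already established.
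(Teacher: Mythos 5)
Your proof is correct and is essentially the argument the paper leaves implicit by writing only that the corollary follows from Corollary \ref{eqvalhyp}: you combine the classification of valuations on $\mathcal{T}(\Gamma)$ from the preceding proposition with the bijection (up to equivalence) between valuations and valuation hyperrings. Your observation that the proof of that preceding proposition actually yields $v=\sigma\circ\pi_\Delta$, and hence an equivalence of valuations rather than merely an isomorphism of value groups, is exactly the point needed to make the surjectivity step go through.
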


\section{Krasner valuations induced by the additive structure}\label{sec5}

In this section we apply some of the results obtained in the previous section to Krasner valued hyperfields. We begin by observing that coarsenings of Krasner valuations might not be Krasner valuations.

\begin{example}
Let $\Gamma$ be a non-trivial ordered abelian group with a non-trivial convex subgroup $\Delta$ and consider the identity map as a Krasner valuation $v:\mathcal{T}'(\Gamma)\to\mathcal{T}(\Gamma)$. As a map, the coarsening
\[
v_\Delta:\mathcal{T}'(\Gamma)\to\mathcal{T}(\Gamma/\Delta)
\]
is just the canonical epimorphism $\Gamma\to\Gamma/\Delta$ extended to send $\infty$ to $\infty$. Suppose that $v_\Delta$ is a Krasner valuation of norm $\rho$ for some initial segment $\rho$ of $\Gamma/\Delta$ containing $0_{\Gamma/\Delta}$. Then by (KVH2) for all $\gamma\in\Gamma$ we would have that
\[
\gamma>0\iff \gamma\in0\boxplus'0\iff \gamma+\Delta>\rho+(0+\Delta).
\]
On the other hand, since $0_{\Gamma/\Delta}\in\rho$ , any positive $\gamma\in\Delta$ would violate this equivalence.
\end{example}

Nevertheless, we have the following result.

\begin{theorem}\label{coarseningst}
Let $(F,v)$ be a Krasner valued hyperfield and let $w$ be the Krasner valuation on $F$ determined by
\[
\mathcal{O}_w=\{x\in F\mid x-x\subseteq 1-1\}.
\]
Then $w$ is equivalent to the coarsening of $v$ corresponding to $\ig(\rho_v)$. In particular, the coarsening of a Krasner valuation $v$ corresponding to $\ig(\rho_v)$ is a Krasner valuation.
\end{theorem}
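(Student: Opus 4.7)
The plan is to apply Corollary~\ref{eqvalhyp}: equivalence of valuations is detected by equality of valuation hyperrings, so it is enough to prove
\[
\mathcal{O}_w \;=\; \mathcal{O}_{v_\Delta}, \qquad \Delta := \ig(\rho_v).
\]
The \emph{in particular} clause will then follow automatically, since $w$ is a Krasner valuation by Proposition~\ref{Mittas2} and being a Krasner valuation is preserved under equivalence: an order-preserving isomorphism of value groups transports the norm from one side to the other, and axioms (KVH1)--(KVH2) are invariant under this transport.

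For the coarsening, Proposition~\ref{Coars} together with the definition of the quotient ordering on $vF/\Delta$ (using that $\Delta$ is convex and contains $0$) gives
\[
\mathcal{O}_{v_\Delta} \;=\; \{x\in F \mid vx>0 \text{ or } vx\in\Delta\}.
\]
For $\mathcal{O}_w$, I would apply axiom (KVH2) twice, using that $0\in 1-1$ and $0\in x-x$ by (CH3), and that $0-t=\{-t\}$ is a singleton with $v(-t)=vt$ by Corollary~\ref{valringFVK}\,$(ii)$. This yields the clean membership criteria
\[
t\in 1-1 \iff vt>\rho_v, \qquad t\in x-x \iff vt>\rho_v+vx.
\]
Combined with surjectivity of $v$, the inclusion $x-x\subseteq 1-1$ is then equivalent to the purely order-theoretic statement $\rho_v\subseteq\rho_v+vx$ inside $vF$.

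The proof then finishes by a case analysis on $vx$. If $vx\geq 0$, then for every $\delta\in\rho_v$ one has $\delta-vx\leq\delta$, so $\delta-vx\in\rho_v$ by the initial segment property, whence $\delta\in\rho_v+vx$ and the inclusion is automatic. If $vx<0$, then $\rho_v+vx\subseteq\rho_v$, so $\rho_v\subseteq\rho_v+vx$ is equivalent to the equality $\rho_v=\rho_v+vx$, i.e.\ to $vx\in\ig(\rho_v)=\Delta$. Adjoining the trivial case $x=0$ (where $v(0)=\infty$ qualifies on both sides), one obtains $\mathcal{O}_w=\{x\in F\mid vx\geq 0 \text{ or } vx\in\Delta\}$, which coincides with $\mathcal{O}_{v_\Delta}$ since $0\in\Delta$. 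The only mildly subtle point is the negative case, where one has to recognise that $\rho_v$ being stable under shifting by $vx$ is precisely the defining condition of the invariance group; beyond this, the argument is an unpacking of (KVH2) together with elementary reasoning about initial segments.
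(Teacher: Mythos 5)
Your proof is correct and follows essentially the same strategy as the paper: reduce equivalence of valuations to equality of valuation hyperrings via Corollary~\ref{eqvalhyp}, use (KVH2) with $z=0$ to obtain the clean criteria $t\in x-x\iff vt>\rho_v+vx$ and $t\in 1-1\iff vt>\rho_v$, and then compare $\rho_v+vx$ with $\rho_v$ according to the sign of $vx$, recognising the negative case as the defining condition of $\ig(\rho_v)$. The paper organises the same computation as a case split on whether $vx+\ig(\rho_v)\succeq 0$, but the underlying content (that $\rho_v\subseteq\rho_v+vx$ precisely when $vx\geq 0$ or $vx\in\ig(\rho_v)$) is identical; your extraction of the order-theoretic reformulation $\rho_v\subseteq\rho_v+vx$ is a clean way to phrase it. Your remark that Krasner-valuedness is preserved under equivalence of valuations, needed for the \emph{in particular} clause, is a correct small point that the paper leaves implicit.
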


\begin{proof}
We show that these two valuations have the same valuation hyperring. Indeed, for all $x\in F$ we have that $vx+\ig(\rho_v)\succeq 0_{vF/\ig(\rho_v)}$ if and only if $vx\in\ig(\rho_v)$ or $vx>\ig(\rho_v)$. In both cases by (KVH2) applied to $v$ we have that
\[
y\in x-x\iff vy>\rho_v+vx\supseteq\rho_v\Longrightarrow y\in 1-1,
\]
i.e., $y\in\mathcal{O}_w$. Conversely, $vx+\ig(\rho_v)\prec 0_{vF/\ig(\rho_v)}$ if and only if $vx\notin\ig(\rho_v)$ and $vx<0$. Therefore, $\rho_v+vx\subsetneq\rho_v$ and there exists $y\in F$ such that $vy\in\rho_v$ and $vy\notin\rho_v+vx$. By (KVH2) applied to $v$, the former implies $y\notin 1-1$ while the latter is equivalent to $y\in x-x$. We conclude that $x\notin\mathcal{O}_w$. We have proved that $v_{\ig(\rho_v)}$ and $w$ have the same valuation hyperring in $F$. The theorem follows from Corollary \ref{eqvalhyp}.
\end{proof}

\begin{corollary}
Let $F$ be a hyperfield admitting two Krasner valuations $v_1$ and $v_2$ of norm $\rho_1$ and $\rho_2$, respectively. Then the coarsening of $v_1$ corresponding to $\ig(\rho_1)$ is equivalent to the coarsening of $v_2$ corresponding to $\ig(\rho_2)$.
\end{corollary}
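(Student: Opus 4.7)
The plan is to invoke Theorem \ref{coarseningst} twice and observe that the valuation hyperring
\[
\mathcal{O}_w:=\{x\in F\mid x-x\subseteq 1-1\}
\]
does not depend on the choice of Krasner valuation on $F$; it depends only on the (additive) hyperfield structure of $F$. The whole argument is essentially a transitivity remark, so I would write it in one short block.

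Concretely, I would first apply Theorem \ref{coarseningst} to $v_1$: this gives that the coarsening $(v_1)_{\ig(\rho_1)}$ is equivalent to the Krasner valuation $w$ on $F$ whose valuation hyperring is $\mathcal{O}_w$ above. Then I would apply Theorem \ref{coarseningst} again to $v_2$: the coarsening $(v_2)_{\ig(\rho_2)}$ is equivalent to the Krasner valuation $w'$ on $F$ whose valuation hyperring is $\{x\in F\mid x-x\subseteq 1-1\}$. But this set is manifestly the same subset of $F$ as $\mathcal{O}_w$, since the definition involves only the hyperoperation on $F$ and the elements $0,1\in F$, none of which depends on the chosen valuation. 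Hence $\mathcal{O}_w=\mathcal{O}_{w'}$, so by Corollary \ref{eqvalhyp} the valuations $w$ and $w'$ are equivalent.

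Putting it together, $(v_1)_{\ig(\rho_1)}\sim w\sim w'\sim (v_2)_{\ig(\rho_2)}$, where $\sim$ denotes equivalence of valuations; since equivalence is transitive (again by Corollary \ref{eqvalhyp}, as it reduces to equality of valuation hyperrings), the two coarsenings are equivalent, as required.

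There is essentially no obstacle here: the statement is a direct corollary of Theorem \ref{coarseningst}, and the only subtlety to flag explicitly is that $\mathcal{O}_w$ is intrinsic to the additive structure of $F$, so the same hyperring is produced regardless of whether we start from $v_1$ or $v_2$. I would spend one sentence making this independence explicit, and otherwise keep the proof to two or three lines.
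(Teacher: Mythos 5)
Your proof is correct and is exactly the intended argument: the paper leaves the corollary unproved precisely because it follows immediately from Theorem~\ref{coarseningst} applied twice, combined with the observation that $\{x\in F\mid x-x\subseteq 1-1\}$ is intrinsic to $F$. Your explicit flagging of that independence is appropriate and matches the paper's implicit reasoning.
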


\begin{corollary}\label{Mittas11}
Let $(F,v)$ be a Krasner valued hyperfield. If $\rho_v$ has trivial invariance group, then 
\[
\mathcal{O}_v=\{x\in F\mid x-x\subseteq 1-1\}.
\]
\end{corollary}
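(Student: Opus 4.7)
The plan is to derive this directly from Theorem \ref{coarseningst}, which is the heavy lifting already done. Let $w$ denote the Krasner valuation on $F$ determined by $\mathcal{O}_w=\{x\in F\mid x-x\subseteq 1-1\}$ (this valuation exists by Proposition \ref{Mittas2}, since Proposition \ref{Mittas1} ensures the additive hypergroup of $F$ is superiorly canonical). By Theorem \ref{coarseningst}, $w$ is equivalent to the coarsening $v_{\ig(\rho_v)}$ of $v$. Thus it suffices to show that under the hypothesis $\ig(\rho_v)=\{0\}$, the coarsening $v_{\{0\}}$ is equivalent to $v$ itself, and then invoke Corollary \ref{eqvalhyp}.

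To verify this last point, I would unwind the definition of the valuation hyperring of $v_{\{0\}}$ as given in Proposition \ref{Coars}:
\[
\mathcal{O}_{v_{\{0\}}}=\{x\in F\mid vx+\{0\}\succeq 0_{vF}+\{0\}\}=\{x\in F\mid vx\geq 0\}=\mathcal{O}_v.
\]
Hence $v_{\{0\}}$ and $v$ share the same valuation hyperring, which, by Corollary \ref{eqvalhyp}, makes them equivalent valuations on $F$. Combining this with the equivalence $w\sim v_{\ig(\rho_v)}=v_{\{0\}}\sim v$ supplied by Theorem \ref{coarseningst}, another application of Corollary \ref{eqvalhyp} yields $\mathcal{O}_w=\mathcal{O}_v$, which is exactly the desired conclusion.

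There is essentially no obstacle here; the only point requiring care is making sure that passing to the quotient by the trivial convex subgroup gives back an equivalent valuation, but this is immediate from Proposition \ref{Coars} and the definitions. The entire corollary is just a clean specialisation of Theorem \ref{coarseningst} to the case in which the convex subgroup $\ig(\rho_v)$ degenerates to the identity, and could reasonably be written in two or three lines.
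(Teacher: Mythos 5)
Your proposal is correct and is exactly the route the paper intends: Corollary~\ref{Mittas11} is stated immediately after Theorem~\ref{coarseningst} as a direct specialisation, and your observation that $v_{\{0\}}$ has the same valuation hyperring as $v$ (hence is equivalent by Corollary~\ref{eqvalhyp}) is the one-line verification that closes the argument.
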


\begin{corollary}\label{Mittas22}
Let $F$ be a hyperfield with a superiorly canonical additive hypergroup. Then the norm of the Krasner valuation $v$ determined on $F$ by the valuation hyperring
\[
\mathcal{O}_v=\{x\in F\mid x-x\subseteq 1-1\}
\]
has trivial invariance group.
\end{corollary}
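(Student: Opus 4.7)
The plan is to apply Theorem~\ref{coarseningst} to the Krasner valuation $v$ supplied by Proposition~\ref{Mittas2}, and to exploit the fact that the defining formula $\mathcal{O}_v=\{x\in F\mid x-x\subseteq 1-1\}$ matches, verbatim, the formula for $\mathcal{O}_w$ appearing in the conclusion of that theorem. This self-referential situation should force $\ig(\rho_v)$ to be trivial.

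To carry out the plan, I would first invoke Proposition~\ref{Mittas2} to conclude that $(F,v)$ is a Krasner valued hyperfield with some norm $\rho_v$, so that the hypothesis of Theorem~\ref{coarseningst} is met. Applying the theorem to $(F,v)$, we obtain a Krasner valuation $w$ with $\mathcal{O}_w=\{x\in F\mid x-x\subseteq 1-1\}$ which is equivalent to the coarsening $v_{\ig(\rho_v)}$ of $v$. But by the very definition of $v$ we have $\mathcal{O}_w=\mathcal{O}_v$, so Corollary~\ref{eqvalhyp} yields $w\sim v$, and transitivity then gives $v\sim v_{\ig(\rho_v)}$.

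It remains to extract the triviality of $\ig(\rho_v)$ from this equivalence. Unfolding the definition, there must exist an order-preserving group isomorphism $\sigma: vF\to vF/\ig(\rho_v)$ with $v_{\ig(\rho_v)}=\sigma\circ v$. Since by Proposition~\ref{Coars} we have $v_{\ig(\rho_v)}(x)=v(x)+\ig(\rho_v)$ for every $x\in F$, and since $v$ is surjective onto $vF$, the map $\sigma$ is forced to coincide with the canonical quotient $vF\to vF/\ig(\rho_v)$. In order for this quotient map to be a bijection, its kernel $\ig(\rho_v)$ must be $\{0\}$, which is precisely what we want. The only delicate point is the bookkeeping around the definition of equivalence of valuations, but this is entirely routine; the substantive content of the proof is the observation that Theorem~\ref{coarseningst} can be applied to $v$ itself.
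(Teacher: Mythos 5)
Your proof is correct and follows the approach the paper intends: Corollary~\ref{Mittas22} is deduced by applying Theorem~\ref{coarseningst} to the Krasner valuation $v$ furnished by Proposition~\ref{Mittas2}, noting that the $w$ produced by that theorem then has the same valuation hyperring as $v$, so Corollary~\ref{eqvalhyp} gives $v$ equivalent to its own coarsening $v_{\ig(\rho_v)}$, which forces $\ig(\rho_v)=\{0\}$. Your final step, identifying $\sigma$ with the canonical projection $vF\to vF/\ig(\rho_v)$ and reading off the trivial kernel, is a valid way to finish (equivalently one could note $\mathcal{O}_v\subsetneq\mathcal{O}_{v_{\ig(\rho_v)}}$ whenever $\ig(\rho_v)\ne\{0\}$).
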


\begin{corollary}\label{Mittas3}
Let $F$ be a hyperfield with a superiorly canonical additive hypergroup. There is a unique (up to equivalence) Krasner valuation $v$ on $F$ such that $\ig(\rho_v)=\{0\}$.
\end{corollary}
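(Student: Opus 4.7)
The plan is to split into existence and uniqueness, both of which fall out essentially for free from the corollaries immediately preceding the statement.

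For existence, since the additive hypergroup of $F$ is superiorly canonical, Proposition \ref{Mittas2} provides a Krasner valuation $v$ on $F$ whose valuation hyperring is $\mathcal{O}_v = \{x \in F \mid x - x \subseteq 1 - 1\}$. By Corollary \ref{Mittas22}, the norm $\rho_v$ of this Krasner valuation has trivial invariance group, so $v$ is a witness to the existence claim.

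For uniqueness up to equivalence, suppose $v_1$ and $v_2$ are two Krasner valuations on $F$ with $\ig(\rho_{v_1}) = \ig(\rho_{v_2}) = \{0\}$. Applying Corollary \ref{Mittas11} to each of them gives
\[
\mathcal{O}_{v_1} = \{x \in F \mid x - x \subseteq 1 - 1\} = \mathcal{O}_{v_2},
\]
and then Corollary \ref{eqvalhyp} from Section \ref{sec2} implies that $v_1$ and $v_2$ are equivalent. This completes the argument.

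There is essentially no obstacle here, since the statement is a direct packaging of Proposition \ref{Mittas2}, Corollary \ref{Mittas11}, Corollary \ref{Mittas22}, and Corollary \ref{eqvalhyp}; the entire content of the corollary is that the valuation hyperring $\{x \in F \mid x - x \subseteq 1 - 1\}$ singles out, via the invariance-group condition, a canonical representative in its equivalence class of Krasner valuations.
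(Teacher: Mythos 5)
Your proof is correct and matches the paper's implicit argument exactly: existence from Proposition \ref{Mittas2} plus Corollary \ref{Mittas22}, uniqueness from Corollary \ref{Mittas11} plus Corollary \ref{eqvalhyp}. Nothing is missing.
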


\begin{remark}
In the model theory of valued fields, the RV-structure (mentioned in the introduction) of level $\gamma$ of a valued field $(K,v)$, where $\gamma$ is a non-negative element of $vK$, is essentially the Krasner valued hyperfield\footnote{This hyperfield is called the valued $\gamma$-hyperfield in \cite{Lee20}} $K_\gamma:=K_{\rho_\gamma}$,  where $\rho_\gamma:=\{\delta\in vK\mid\delta\leq\gamma\}$. Since $\ig(\rho_\gamma)=\{0\}$, it follows, as a consequence of Corollary \ref{Mittas11}, that the valuation hyperring of the Krasner valuation on $K_{\gamma}$ induced by $v$ is always \emph{definable} in the language of hyperfields, e.g., using the ternary relation $z\in x+y$ to encode the multivalued operation $+$.
\end{remark}

\section{Further research}\label{sec6}

There are at least three interesting points that have been touched but not developed further in this manuscript. Below we briefly describe them.

\begin{enumerate}
\item In the multivalued setting, $(F,v)$, $vF$ and $Fv$ can all be described as (valued) hyperfields and they are not distinct structures. This applies in particular when $F$ is a field.

\item In the literature there are not many examples of infinite hyperfields that are not factor hyperfields. Essentially, only the work of Massouros \cite{Mas85, Mas1985} provides such examples. In all those examples $M$ we have by definition $x+x=M\setminus\{x\}$ for all $x\in M^\times$. If $v$ is a valuation on $M$ and $x\neq 1$, then $x,x^{-1}\in 1+1$, so by (V3) and Corollary \ref{valringFVK} $(iii)$ we have that $vx=0$ must hold for all $x\in M^\times$. It follows that $M$ only admits the trivial valuation. Is it true that any hyperfield admitting a non-trivial valuation is a factor hyperfield? 

\item In view of Theorem \ref{charTGamma}, a non-trivial valuation on a hyperfield $F$ is a surjective homomorphism onto a hyperfield $H$ satisfying the following three conditions:
\begin{itemize}
\item $H$ has characteristic $2$;
\item $H$ has C-characteristic $1$;
\item $H$ is stringent.
\end{itemize} 
From this point of view, the image of a valuation is a quite special hyperfield. For example, one could think that responsible for the fact that finite (hyper)fields only admit the trivial valuation is the third property of $H$, since there are many examples of finite hyperfields satisfying $0,1\in 1+1$. We speculate that investigating weakenings of the stringency property for $H$ may lead to a notion of \emph{generalised valuation} which have the potential to be applied also in the classical singlevalued setting e.g.\ to use (generalised) valuation-theoretic methods over finite fields.
\end{enumerate}

\bibliography{Linzivalhyp}
\bibliographystyle{abbrv}

\end{document}